\newcommand{\yields}{\vdash}
\newcommand{\jtype}{\,\,\mathsf{type}}
\newcommand{\jeq}{\equiv}
\newcommand{\ind}[3]{\mathsf{let} \; {#2} := {#1} \, \mathsf{in} \, {#3}}
\DeclareMathOperator{\Aa}{\mathcal{A}}
\DeclareMathOperator{\Ca}{\mathcal{C}}
\DeclareMathOperator{\Da}{\mathcal{D}}
\DeclareMathOperator{\Ea}{\mathcal{E}}
\DeclareMathOperator{\Ha}{\mathcal{H}}
\DeclareMathOperator{\La}{\mathcal{L}}
\DeclareMathOperator{\Sa}{\mathcal{S}}
\DeclareMathOperator{\Za}{\mathcal{Z}}
\DeclareMathOperator{\Cb}{\mathbb{C}}
\DeclareMathOperator{\Db}{\mathbb{D}}
\DeclareMathOperator{\Eb}{\mathbb{E}}
\DeclareMathOperator{\Nb}{\mathbb{N}}
\DeclareMathOperator{\Rb}{\mathbb{R}}
\DeclareMathOperator{\Sb}{\mathbb{S}}
\DeclareMathOperator{\Zb}{\mathbb{Z}}
\newcommand{\lie}[1]{\mathbbm{#1}}
\newcommand{\B}{\textbf{B}}
\DeclareMathOperator{\im}{\mathsf{im}}
\DeclareMathOperator{\op}{^\text{op}}
\DeclareMathOperator{\inv}{^{-1}}
\DeclareMathOperator{\id}{\mathsf{id}}
\renewcommand{\ker}{\mathsf{ker}}
\DeclareMathOperator{\Type}{\textbf{Type}}
\DeclareMathOperator{\refl}{\mathsf{refl}}
\DeclareMathOperator{\fib}{\mathsf{fib}}
\DeclareMathOperator{\fst}{\term{fst}}
\DeclareMathOperator{\snd}{\term{snd}}
\DeclareMathOperator{\ap}{\term{ap}}
\DeclareMathOperator{\shape}{\text{\textesh}}
\newcommand{\sslash}{\mathbin{/\mkern-6mu/}} %
\newcommand{\inftycover}[1]{\overset{\infty}{#1}}
\DeclareMathOperator{\Set}{\textbf{Set}}
\DeclareMathOperator{\Id}{\textbf{Id}}
\newcommand{\dprod}[1]{({#1}) \to }           %
\newcommand{\dsum}[1]{({#1}) \times }       %
\newcommand{\type}[1]{\mathsf{{#1}}}
\newcommand{\term}[1]{\mathsf{{#1}}}
\newcommand{\trunc}[1]{\left\lVert#1\right\rVert}       %
\newcommand{\pt}{\term{pt}}
\newcommand{\xto}[1]{\xrightarrow{#1}}
\newcommand{\pto}{\,\cdot\kern-.1em{\to}\,}
\newcommand{\pxto}[1]{\,\cdot\kern-.1em{\xto{#1}}\,}
\providecommand*{\xmapstofill@}{%
  \arrowfill@{\mapstochar\relbar}\relbar\rightarrow
}
\providecommand*{\xmapsto}[2][]{%
  \ext@arrow 0395\xmapstofill@{#1}{#2}%
}
\def\slashedarrowfill@#1#2#3#4#5{%
  $\m@th\thickmuskip0mu\medmuskip\thickmuskip\thinmuskip\thickmuskip
   \relax#5#1\mkern-7mu%
   \cleaders\hbox{$#5\mkern-2mu#2\mkern-2mu$}\hfill
   \mathclap{#3}\mathclap{#2}%
   \cleaders\hbox{$#5\mkern-2mu#2\mkern-2mu$}\hfill
   \mkern-7mu#4$%
}
\def\rightslashedarrowfill@{%
  \slashedarrowfill@\relbar\relbar\mapstochar\rightarrow}
\newcommand\xslashedrightarrow[2][]{%
  \ext@arrow 0055{\rightslashedarrowfill@}{#1}{#2}}
\tikzset{ vert/.style={anchor=south, rotate=90, inner sep=.5mm} }
\newtheorem{thm}{Theorem}[subsection]
\theoremstyle{definition}
\newtheorem{defn}[thm]{Definition}
\newtheorem{ex}[thm]{Example}
\newtheorem{axiom}{Axiom}
\newtheorem{assumption}{Assumption}
\newtheorem{rmk}[thm]{Remark}
\newtheorem{warning}[thm]{Warning}
\newtheorem*{acknowledgements}{Acknowledgements}
\newtheorem{lem}[thm]{Lemma}
\newtheorem{notation}{Notation}
\newtheorem{conjecture}[thm]{Conjecture}
\newtheorem{cor}[thm]{Corollary}
\newtheorem{prop}[thm]{Proposition}
\newtheorem{con}[thm]{Construction}
\let\c@equation\c@thm
\numberwithin{equation}{subsection}
\title{Modal Fracture of Higher Groups}
\author{David Jaz Myers}
\begin{document}
\maketitle

\begin{abstract}

In this paper, we examine the modal aspects of higher groups in
Shulman's Cohesive Homotopy Type Theory. We show that every higher group sits
within a modal fracture hexagon which renders it into its discrete,
infinitesimal, and contractible components. This gives an unstable and synthetic
construction of Schreiber's differential cohomology hexagon. As an example of this modal fracture
hexagon, we recover the character diagram characterizing
ordinary differential cohomology by its relation to its underlying integral
cohomology and differential form data, although there is a subtle obstruction to
generalizing the usual hexagon to higher types. Assuming the existence of a long
exact sequence of differential form classifiers, we construct the classifiers for circle $k$-gerbes
with connection and describe their modal fracture hexagon. 
  
\end{abstract}

\tableofcontents

\begin{acknowledgements}
  I would like to thank Emily Riehl, Mitchell Riley, and Grigorios Giotopolous for their careful reading and helpful comments
  during the drafting of this paper. I also appreciate the reviewer's comments and suggestion to include an introduction to homotopy type theory in the paper. The author is grateful for financial
  support from the ARO under MURI Grant W911NF-20-1-0082 and by Tamkeen under the NYU Abu Dhabi Research Institute
  Grant CG008.
\end{acknowledgements}

\section{Introduction}

There are many situations where cohomology is useful but we need more than
just the information of cohomology classes and their relations in cohomology --- we need the information of
specific cocycles which give rise to those classes and cochains which witness these
relations. A striking example of this is \emph{ordinary differential cohomology}. To give a home for
calculations done in \cite{Chern-Simons:Characteristic.Forms}, Cheeger and Simons
\cite{Cheeger-Simons:Differential.Characters} gave a series of lectures in 1973 defining and studying
\emph{differential characters}, which equip classes in ordinary integral
cohomology with explicit differential form representatives. Slightly earlier, Deligne \cite{Deligne:Cohomology} had put forward a
cohomology theory in the complex analytic setting which would go on to be called
Deligne cohomology. It was later realized that when placed in the differential
geometric setting, Deligne cohomology gave a presentation of the theory of
differential characters. This combined theory has become known as ordinary
  differential cohomology.

The ordinary differential cohomology $D_k(X)$ of a manifold $X$ is characterized by its relationship to
the ordinary cohomology of $X$ and the differential forms on $X$ by a diagram known as
the \emph{differential cohomology hexagon} or the \emph{character diagram} \cite{Simons-Sullivan:Axiomatic.Ordinary.Differential.Cohomology}:
\begin{equation}\label{eqn:character.diagram}
\begin{tikzcd}
	& {\Lambda^k(X)/\text{im}(d)} && {\Lambda^{k + 1}_{\text{cl}}(X)} \\
	{H^k(X; \Rb)} && {D_k(X)} && {H^{k + 1}(X; \Rb)} \\
	& {H^{k}(X; U(1))} & {} & {H^{k + 1}(X ; \Zb)}
	\arrow[from=2-1, to=1-2]
	\arrow[from=2-1, to=3-2]
	\arrow[from=1-2, to=2-3]
	\arrow[from=3-2, to=2-3]
	\arrow["d", from=1-2, to=1-4]
	\arrow["\beta"', from=3-2, to=3-4]
	\arrow[from=1-4, to=2-5]
	\arrow[from=2-3, to=3-4]
	\arrow[from=2-3, to=1-4]
	\arrow[from=3-4, to=2-5]
\end{tikzcd}
\end{equation}
In this diagram, the top and bottom sequences are long exact, and the diagonal
sequences are exact in the middle. The bottom sequence is the Bockstein sequence
associated to the universal cover short exact sequence
$$0 \to \Zb \to \Rb \to U(1) \to 0$$
while the top sequence is given by de Rham's theorem representing real
cohomology classes by differential forms.

This sort of diagram is characteristic of differential cohomology theories in
general. Bunke, Nikolaus, and V{\:o}kel \cite{Bunke-Nikolaus-Volke:Differential.Cohomology}
interpret differential cohomology theories as sheaves on the site of smooth
manifolds and construct differential cohomology hexagons very generally in this setting:
\[
  \begin{tikzcd}
    & \Aa(\hat{E})^k(X)\ar[rr] \ar[dr] & & \Za(\hat{E})^k(X)\ar[dr] & \\
    H^{k-1}(X; Z(\hat{E})) \ar[ur] \ar[dr]& & \hat{E}^k(X) \ar[ur] \ar[dr] & & H^k(X; Z(\hat{E})) \\
    & H^k(X; S(\hat{E})) \ar[rr]\ar[ur] & & H^k(X; U(E)) \ar[ur]
  \end{tikzcd}
\]
Here, $\hat{E}$ is the differential cohomology theory, $U(\hat{E})$ is its
underlying topological cohomology, $\Za(\hat{E})$ are the differential cycles,
$S$ is the secondary cohomology theory given by flat classes, and $\Aa$
classifies differential deformations (this summary discussion is lifted from
\cite{Bunke-Nikolaus-Volke:Differential.Cohomology}). Here, as with ordinary differential cohomology, the top
and bottom sequences are exact, and the diagonal sequences are exact in the middle.

The arguments of Bunke, Nikolaus, and V{\:o}kel are abstract and \emph{modal} in
character --- they only make essential use of an adjunction between an
idempotent (co)monads.\footnote{The term ``modal'' is taken from the classic
  alethic modalities of possibility and necessity, which are idempotent
  (co)monads on suitable categories of predicates.} This is emphasized by Schreiber in his book
\cite{Schreiber:Differential.Cohomology}, where he constructs similar diagrams
in the setting of an adjoint triple 
\begin{equation}\label{eqn:adjoint.diagram}
\begin{tikzcd}
	\Ea \\
	\\
	\Ha
	\arrow[""{name=0, anchor=center, inner sep=0}, "\Delta"{description}, hook, from=3-1, to=1-1]
	\arrow[""{name=1, anchor=center, inner sep=0}, "{\Pi_{\infty}}"', bend right =
  50, from=1-1, to=3-1]
	\arrow[""{name=2, anchor=center, inner sep=0}, "\Gamma", bend left = 50, from=1-1, to=3-1]
	\arrow["\dashv"{description}, Rightarrow, draw=none, from=1, to=0]
	\arrow["\dashv"{description}, Rightarrow, draw=none, from=0, to=2]
\end{tikzcd}
\end{equation}

in which the middle functor is fully faithful and the leftmost adjoint
$\Pi_{\infty}$ preserves
products. In the case that $\Gamma$ is the
global sections functor of an $\infty$-topos $\Ea$ landing in the $\infty$-topos
of homotopy types $\Ha$, this structure makes $\Ea$
into a \emph{strongly $\infty$-connected} $\infty$-topos. When $\Ea$
is the $\infty$-topos of sheaves of homotopy types on manifolds, the leftmost adjoint $\Pi_{\infty}$
is given by localizing at the sheaf of real-valued functions; for a
representable, this recovers the homotopy type or fundamental $\infty$-groupoid
of the manifold. Schreiber shows in Proposition 4.1.17 of
\cite{Schreiber:Differential.Cohomology} that any such adjoint triple gives
rise to differential cohomology hexagons, specializing to those of Bunke,
Nikolaus, and V{\:o}kel in the case that $\Ea$ is the $\infty$-topos of sheaves of homotopy types on
smooth manifolds.

This abstract re-reading of the differential cohomology hexagon shows that
there is nothing specifically ``differential'' about them, and that they arise
in situations where there is no differential calculus to be found. Schreiber
emphasizes this point in an $n$Lab article \cite{nlab:differential_cohesion_and_idelic_structure} where he refigures these
hexagons as \emph{modal fracture squares}. To recover more traditional fracture theorems, Schreiber considers the
case where $\Ea = A\text{-Mod}$ is the $\infty$-category of modules over an
$\Eb^2$-ring $A$; $\Gamma = \Gamma_I$ is the reflection into $I$-nilpotent
modules (with $I \subseteq \pi_0 A$ a finitely generated ideal) constructed by
Lurie in \cite[Notation 4.1.13]{Lurie:DAGXII}, and $\Pi_{\infty} M = M_I^{\wedge}$ is
the
$I$-completion constructed in \cite[Notation 4.2.3]{Lurie:DAGXII}. The subcategories of $I$-nilpotent and $I$-complete modules are distinct but equivalent, allowing us to see them as a single
$\infty$-category $\Ha$.

In this paper, we will construct the \emph{modal fracture hexagon}
associated to a higher group --- a group object in the $\infty$-topos of
stacks on a given site --- \emph{synthetically}, working in an
appropriately modal homotopy type theory. We will then specialize to ordinary
differential cohomology on the site of smooth manifolds, constructing the
classifying stacks for $U(1)$-bundle gerbes with connection from a few basic
assumptions, computing their modal fracture hexagon, and relating it to the
traditional character diagram.

We hope that the simplicity and directness of the
arguments we give here will serve as ample advertisement for the use of modal
homotopy type theory as a language for doing higher differential geometry. To
that end, we begin by introducing modal homotopy type theory for those unfamiliar.
Those already familiar may want to skip to \cref{sec:modal.fracture}.

\subsection{Convenient categories of smooth spaces}\label{sec:convenient}

The category of smooth manifolds is somewhat deficient from a categorical point of view. Manifolds are not closed under taking pullback (fiber product), for example, and there is no mapping space functor right adjoint to the categorical product. The category of manifolds also lacks many colimit constructions such as coequalizers. This is due to the \emph{niceness} of manifolds; they are spaces which are locally diffeomorphic to $\Rb^n$. It is a truism of category theory that categories of nice objects tend to be ill-behaved, while categories of perhaps ill-behaved objects are free to be nice. If we expand our notion of smooth space to include any ``space'' that can be probed by arbitrary smooth functions (and not just diffeomorphisms), we will arrive at a much more convenient category of smooth spaces. 

This is the idea behind Chen's notion of \emph{differentiable spaces} \cite{Chen:diffeo} and Souriau's more general \emph{diffeological spaces} \cite{Souriau:diffeo} (see \cite{BH:smooth.spaces} for an overview and comparison). The definition of diffeological spaces is illustrative, so let's see it.

\begin{defn}
  A \emph{diffeological space} is a set $X$ together with, for every open subset $U \subseteq \Rb^n$ of Euclidean space (for any $n$), a subset $X(U) \subseteq \type{Fun}(U, X)$ of functions from $U$ to $X$ whose elements are called ``plots". These sets are required to satisfy the following axioms:
  \begin{enumerate}
    \item Plots are closed under precomposition by smooth functions. That is, if $\phi \in X(U)$ is a plot and $f : U' \to U$ is a smooth function, then the composite $\phi \circ f$ is in $X(U')$.
    \item Plots glue over open covers. That is, suppose that the family $(U_i)_{i \in I}$ covers $U$ with inclusions $f_i : U_i \hookrightarrow U$, and let $\phi : U \to X$. Whenever $\phi \circ f_i \in X(U_i)$ for all $i$, then $\phi \in X(U)$.
    \item Every point is a plot. That is, every function $\Rb^0 \to X$ is a plot.
  \end{enumerate}
\end{defn}

The notion of ``plot" expands the notion of chart or local coordinates to possibly degenerate ``coordinates" given by any smooth parameterization of elements of $X$. The category of diffeological spaces has all pullbacks and mapping spaces (it is furthermore \emph{locally cartesian closed}), and it has all colimits. Nicely, the category of smooth manifolds fully faithfully embeds into the category of diffeological spaces by defining the plots $M(U)$ to be the smooth functions $U \to M$. However, this category still has a few difficiences; it is not \emph{balanced}, for example --- there are monic epimorphisms which are not isomorphisms.

The school of \emph{synthetic differential geometry} developed by Lawvere, Dubuc, Penon, Bunge, and many others (see \cref{rmk:future.work.sdg} for a brief overview) make a further leap: it is not necessary to assume we have the set $X$ of points given a priori, since we can recover it as the set of zero dimensional plots $X(\Rb^0)$. This leads to a definition of \emph{smooth sets}\footnote{This term is due to Schreiber, and appears in, for example, \cite{khavkine2017synthetic}, as well as in \cite{Schreiber:Differential.Cohomology}.} as sheaves on the site of open subsets of Euclidean spaces and smooth maps:
\begin{defn}
Let $\Omega_{\type{Euc}}$ be the category of open subsets of Euclidean spaces and smooth maps between them. A \emph{smooth set} is a functor $X : \Omega_{\type{Euc}}\op \to \Set$ assigning to each open subset $U \subseteq \Rb^n$ a set $X(U)$ of ``plots" together with a right action $\varphi \mapsto \varphi \cdot f : X(U) \to X(U')$ by smooth functions $f : U' \to U$, subject to the following \emph{sheaf conditions}:
\begin{enumerate}
  \item Suppose that the family $(U_i)_{i \in I}$ covers $U$ with inclusions $f_i : U_i \hookrightarrow U$. Let $f_{ij}^i : U_i \cap U_j \hookrightarrow U_i$ and similarly $f_{ij}^j : U_i \cap U_j \to U_j$ be the inclusions. For every family $\varphi_i \in X(U_i)$ of plots such that $\varphi_i \cdot f_{ij}^i = \varphi_j \cdot f_{ij}^i$, there exists a unique $\varphi \in X(U)$ so that $\varphi_i = \varphi \cdot f_i$. 
\end{enumerate}
A map of smooth sets is a natural transformation: a family of maps $g_U : X(U) \to Y(U)$ such that $g_U(\varphi) \cdot f_i = g_{U'}(\varphi \cdot f_i)$ for all $\varphi \in X(U)$ and smooth $f : U' \to U$.\footnote{In fact, since every open $U \subseteq \Rb^n$ may be covered by open balls, and since open balls are diffeomorphic to $\Rb^n$, Schreiber observes in \cite{Schreiber:Differential.Cohomology} that it suffices to define the set of plots $X(\Rb^n)$ for each $n$. Similary, but in the other direction, we may define a smooth set by plots $X(M)$ from any smooth manifold; the gluing condition assures us that this definition may be reduced to the one given above, since manifolds are locally diffeomorphic to open subsets $U$ of Euclidean spaces.}
\end{defn}

The leap from diffeological spaces to smooth sets is guided by \emph{representability}. More notions become representable in the category of smooth sets than are in diffeological spaces. This includes our earlier examples of fiber products and mapping spaces, both of which are defined by the functors they represent. But it also includes more alien examples of smooth spaces. For example, there is a smooth set $\Lambda^n$ which represents $n$-forms: maps $M \to \Lambda^n$ from a manifold correspond bijectively with $n$-forms on $M$. 
The definition of $\Lambda^n$ is forced by the Yoneda lemma: plots $\Lambda^n(U)$ are in bijection with maps $U \to \Lambda^n$, which we want to be in bijection with $n$-forms on $U$. Precomposing a map $U \to \Lambda^n$ by an inclusion $V \hookrightarrow U$ should induce the pullack on forms.
Therefore, we must take $\Lambda^n(U)$ to be the set of $n$-forms on $U$ and define the functorial action $\omega \cdot f := f^{\ast}\omega$ by pullback. Note that $\Lambda^n$ is far from being a diffeological space; there is exactly one element of $\Lambda^n(\Rb^0)$ (the unique $n$-form on the point), but of course many $n$-forms on an open $U$ of dimension at least $n$. These sheaves will play an important role in our synthetic definition of differential cohomology in \cref{sec:diff.coh}. 

Smooth sets have been used to given a synthetic account of PDEs in \cite{SK:pde}, and provide a good setting for making precise the common manipulations performed by physicists on infinite dimensional spaces of fields; see \cite{GH:field.theory} for an extensive introduction to smooth sets for this use. 

It is a somewhat under-appreciated fact that toposes of sheaves of sets --- the category of smooth sets, for example --- are themselves models of intuitionistic set theory (see, for example, \cite{MM:sheaves.in.geometry.and.logic}). That is, arguments given in ordinary set theory which make no use of the axiom of choice or the law of excluded middle\footnote{This avoidance of the axiom of choice and excluded middle is not on philosophical grounds, but rather topological grounds. The law of excluded middle states that any proposition is either true or false; seems obvious enough, but note that this may be equivalently expressed as saying that any subset $U \subseteq X$ admits a characteristic function $\chi : X \to \{0, 1\}$ (defined by $\chi(x) = 1$ if $x \in U$ and $0$ otherwise, which covers every case by the law of excluded middle: either $x \in U$ or $x \not\in U$). But since all functions between smooth sets are themselves smooth, such a characteristic function would need to be smooth and therefore constant on connected components of $X$ --- in other words, a general manifold is not the \emph{disjoint} union of any of its subsets and its complement. Failure of excluded middle therefore detects the \emph{connectedness} of objects. Similarly, the axiom of choice says that any surjection admits a section; but it is not true that any cover of a manifold admits a (global) smooth section.} may be interpreted as statements about sheaves of sets in any topos, and in particular as statements about smooth sets. Since the category of smooth manifolds embeds fully faithfully into the category of smooth sets, we end up with a set theory where ``every function is smooth" --- without having to check any conditions about it. 
This observation underlies the development of \emph{synthetic differential geometry}, which develops differential geometric notions in the topos of smooth sets (or, most commonly, a slightly richer topos in which the tangent bundle functor is representable by an infinitesimal line, a ``walking tangent vector"). See Bell's \emph{Introduction to smooth infintesimal analysis} \cite{Bell:smooth.infinitesimal.analysis} for an introduction to synthetic differential geometry from the (more naive) set theoretic, as opposed to sheaf theoretic, point of view.

Smooth sets are not, however, the end of the search for a truly convenient category of smooth spaces. Just as the definition of diffeological spaces excluded smooth sets like $\Lambda^n$ which are easily definable as representing a functor on manifolds, the definition of smooth sets excludes other objects which are almost sheaves, but for which the equation in the gluing condition is too strong a constraint. For example, consider the assignement $U \mapsto \{\mbox{Hermitian line bundles on $U$}\}$; this is almost a sheaf, except that it satisfies the following weaker version of the gluing condition:
\begin{itemize}
  \item Suppose that the family $(U_i)_{i \in I}$ covers $U$, with inclusions $f_i : U_i \hookrightarrow U$. Let $f_{ij}^i : U_i \cap U_j \hookrightarrow U_i$ and similarly $f_{ij}^j : U_i \cap U_j \to U_j$ be the restricted inclusions. Now suppose that $\La_i$ is a Hermitian line bundle on $U_i$ and that we have isomorphisms $\rho_{ij} : (f_{ij}^i)^{\ast}\La_i  \cong  (f_{ij}^j)^{\ast}\La_j$ which satisfy the cocycle condition $\rho_{jk} \circ \rho_{ij} = \rho_{ik}$ on triple intersections $U_i \cap U_j \cap U_k$; then there exists a line bundle $\La$ on $U$ with isomorphisms $\lambda_i : f_i^{\ast}\La \cong \La_i$ so that $\rho_{ij} = (f_{ij}^j)^{\ast}\lambda_j \circ (f_{ij}^i)^{\ast}\lambda_i^{-1}$, and the $\La$ and $\lambda_i$ are the unique such extension, up to unique isomorphism. 
\end{itemize}
The assignment $U \mapsto \{\mbox{Hermitian line bundles on $U$}\}$ is therefore not a sheaf of sets, but rather a sheaf of groupoids --- a \emph{stack}. This stack in particular is $\B U(1)$, the classifying stack for Hermitian line bundles; the groupoid of maps $M \to \B U(1)$ of stacks from (the Yoneda image of) a manifold $M$ to $\B U(1)$ is equivalent to the groupoid of Hermitian line bundles on $M$.

In general, to capture not only the classifying stacks for Hermitian line bundles but also those for bundle gerbes and bundle gerbes with connection (which represent classes in ordinary differential cohomology), we are led to work with stacks of higher groupoids on the site of smooth manifolds. But the gluing conditions for $n$-stacks become more complicated as $n$ grows. Taking for granted Grothendieck's \emph{homotopy hypothesis}, the limiting case of $\infty$-groupoids should coincide with the homotopy types of topological spaces as presented, for example, by Kan complexes. It is therefore more convenient to encode the gluing condition as a model structure on the category simplicial presheaves, which is the approach taken by Schreiber in his extensive work in this area \cite{Schreiber:Differential.Cohomology}, or to work entirely with $\infty$-categories as is done by Bunke, Nikolaus, and V{\"o}lkl \cite{Bunke-Nikolaus-Volke:Differential.Cohomology}.

Both \cite{Schreiber:Differential.Cohomology} and \cite{Bunke-Nikolaus-Volke:Differential.Cohomology} emphasize sheaves of $\infty$-groupoids on manifolds as the appropriate setting for differential cohomology theories. Remarkably, just as toposes of sheaves of sets are models of intuitionistic set theory, $\infty$-toposes of sheaves of $\infty$-groupoids are models of a novel foundation of mathematics: \emph{homotopy type theory}. In this paper, we will demonstrate how concise and conceptual arguments in homotopy type theory can be when applied to sheaves of $\infty$-groupoids by constructing the classifying stacks for ordinary differential cohomology from first principles in \cref{sec:diff.coh}.

\subsection{A brief introduction to homotopy type theory.}

Homotopy type theory (HoTT, or sometimes Univalent Foundations \cite{HoTTBook}) is a logical system which lets us work directly with higher stacks as if they
were defined by their elements, just as we work with ordinary sets. To be more precise,
HoTT is a formal language which admits a \emph{categorical semantics} in
$\infty$-toposes\footnote{This is an difficult theorem whose proof was begun by
  Voevodsky, Lumsdaine, and Kapulkin \cite{LK:hott.semantics} (by giving semantics in the
  $\infty$-topos of homotopy types) and completed by Shulman \cite{Shulman:hott.semantics}} --- that is, expressions in homotopy type theory may be
interpreted as constructions in $\infty$-categories of higher stacks. For comparison, ZFC
(Zermelo-Fraenkel set theory with Choice) may be interpreted in toposes of
sheaves on extremally disconnected compact Hausdorff spaces (among others), ZF (without choice)
may be interpreted in sheaves on Stone spaces and toposes of actions of totally
disconnected groups (among others), and intuitionistic ZF (without the law of
excluded middle or choice) may be interpreted in any Grothendieck topos using
the Kripke-Joyal semantics (see, e.g. \cite{MM:sheaves.in.geometry.and.logic}).
From the point of view of the ``internal'' set theory of a topos of sheaves, a sheaf of rings is just a ring, a sheaf of modules is just a module, and a coherent sheaf is a finitely presented module. In this way, the internal language lets us reduce complicated arguments about sheaves to familiar arguments about sets; see \cite{Blechschmidt:thesis} for many examples of how this technique can be used in algebraic geometry.

Generally speaking, while intuitionistic set theory gives us a language for working with
sheaves of sets as if they were just ordinary sets, homotopy type theory gives us a
language for working with higher stacks as if they were just\ldots what exactly?

The answer to this elliptic question is that we will treat higher stacks as if they were
just \emph{types of mathematical objects}. To understand what that means, we
must consider HoTT on its own terms as
a naive and elementary foundation for mathematical practice, just as we do for
(naive) set theory. This does not mean that we have to change how we do all our mathematics; it just means that when using homotopy type theory, we should take it on its own terms and not always try to reduce it to statements about stacks. Things look a lot easier from this point of view, at least when it has been digested.

As a foundation,
HoTT is based on the following elementary principles:
\begin{itemize}
\item In order for a variable $x$ to make sense, we must know what type of
  mathematical object
  $x$ is meant to be. Is $x$ a natural number? A real number? A smooth manifold?
  A tangent vector to a point $p$ in a manifold $M$? These sorts of questions
  are not to be proved or disproved; when we use the variable $x$, we already
  know what type of thing $x$ is meant to be. We never say ``Let $x$, and if $x$
  is a smooth manifold\ldots''; instead, we say ``Let $x$ be a smooth
  manifold\ldots''.

  We write $x : X$ to say that object $x$ has type $X$, or that $x$ is an
  element of type $X$.\footnote{We reserve the set membership symbol $x \in S$ for
    the \emph{proposition} that $x$ lies in the subtype $S$ of the type $X$ of
    $x$. This proposition doesn't make sense unless we know $x : X$ already; we
    never ask whether a smooth manifold lies in the set of even numbers, for example.} For example, $3 : \Nb$
  says that $3$ is a natural number --- a \emph{set} like $\Nb$ is just a
  particular sort of type (see \cref{defn:truncation}).

  \item A type $T$ of mathematical objects (e.g. \emph{natural number},
    \emph{complex number}, \emph{smooth manifold}, \emph{vector field on
      $M$}\ldots) is itself a mathematical object --- it is a \emph{type}, $T :
    \Type$. This applies equally well to the type $\Type$ of types, so that (naively) we
    have $\Type : \Type$.\footnote{Just as naive set theory is inconsistent,
      this naive type theory is inconsistent. Most approaches to proving the
      inconsistency of $\Type : \Type$ begin by constructing ordinals (as in
      \cite{Girard:paradox}) or a model of set theory (as in
      \cite{Miquel:thesis}), and then showing that $\Type : \Type$ enables a
      familiar paradox in these settings (the Burali-Forti paradox in the former
      case, and Russell's paradox in the latter).

      We avoid this contradiction by
    using a hierarchy of \emph{type universes} $\Type_{\ell}$, which, similar to
    Grothendieck universes in set theory, are types
    of types closed under all operations of type theory. We assume that
    $\Type_{\ell} : \Type_{\ell + 1}$, and thereby avoid paradox. As a brief
    remark on terminology, note that just as set theory reserves the word
    ``class'' for sets that are too large to be consistent, type theory reserves
  the word ``kind'' for types that are too large to be consistent; we will not
  pay much attention to these details in this paper. Suffice it to say that type
theory is equiconsistent with set theory, and can suffer from the same
self-referential paradoxes.}

\item\label{identification.desiderata} For any two mathematical objects $a$ and $b$ of the same type $A$, we know
  what it means to \emph{identify} them as elements of that type. For example:
  \begin{itemize}
\item To identify a real vector space $V$ with $\Rb^n$, we may give an $n$-element
 ordered basis of $V$. More generally, two identify two real vector spaces $V$ and $W$,
  we give a linear isomorphism between them.
\item To identify two smooth manifolds $M$ and $N$, we give a diffeomorphism
  between them.
  \item To identify two categories $\Ca$ and $\Da$, we give an equivalence
    between them.
    \item To identify two numbers, we simply prove them 
      equal. 
      \item To identify two subsets of a given set, we show that they contain the same elements.
      \item To identify two types, we put their elements into a one-to-one
        correspondence --- we give an \emph{equivalence} between them.
  \end{itemize}
  Clearly, an identification of $a$ with $b$ as elements of $A$ is
  itself a mathematical object, and so there is a type $a =_A b$ of such
  identifications. We use the ``equals'' symbol $=$ because
  \emph{identification} is a generalization of equality to more general
  mathematical objects where equality is too strict a notion; in particular, for
  things like numbers or points in a manifold, identification is equality. For sets with structure, 
  such vector spaces, groups, Lie algebras and so on, identification is isomorphism. 
\end{itemize}

These principles are meant to be very simple and fundamental. To do
the same exercise for naive set theory, one might say that it is defined by the
principles that mathematical objects may be organized into collections ---
\emph{sets} --- on the basis of a shared property; that collections themselves
are mathematical objects and so can be collected on the basis of
their shared properties; and that two collections are equal when they contain 
the same members.

The notion of \emph{identification} of mathematical objects is
not formalized in set theory. Nevertheless, we have a solid heuristic
understanding of how to identify mathematical objects. HoTT takes the notion of
identification as primitive, and formal systems for HoTT give methods for
computing the type $a =_A b$ of identifications between $a$ and $b$ as
elements of type $A$ --- in all the above cases and more, we get the answer we
expect.\footnote{I would go further and say that the notion of identification we
get through homotopy type theory is \emph{always} the right one, and that if we
are getting a notion of identification we don't like, we are really considering
$a$ and $b$ as elements of the wrong type. Changing the type $A$ will change the
the notion of identification.} 

There are a number of formal systems that realize the above principles --- such as Book HoTT
\cite{HoTTBook}, cubical type theory \cite{CCHM:cubical}, and the in-development
higher observational type theory \cite{AS:HOTT}. These differ in their
implementation of the above principles, but share enough features for us to work
informally in a formalization-agnostic manner in much the way that ordinary
mathematics relates to formal set theories. Let's see these common features
now.\footnote{For a formal presentation, see the appendix of \cite{HoTTBook}.}

Following Martin-L{\"o}f \cite{ML:meaning.of.logical.connectives}, type theory
concerns \emph{judgements} that something is a type, and that something is an element of a given type.
We write the judgement that $D(x, y, z)$ is a type when $x$ is of type $A$, $y$ is of type $B(x)$, and $z$ is of type $C(x, y)$ as follows:
\begin{mathpar}
  x : A, y : B(x), z : C(x, y) \yields D(x, y, z) \jtype
\end{mathpar}
The symbol $\yields$ is called a ``turnstile" and originates with Frege \cite{Frege:turnstile}; it is used in logic to mean ``entails" or ``yields".
We write the judgement that $d(x, y, z)$ is an element of type $D(x, y, z)$ under the same assumptions in this way:
\begin{mathpar}
  x : A, y : B(x), z : C(x, y) \yields d(x, y, z) : D(x, y, z) 
\end{mathpar}
The list $x : A, y : B(x), z : C(x, y)$ of typed free variables is known as the \emph{context} of the element $d(x, y, z)$
and its type $D(x, y, z)$.  Note that the types of the variables appearing later in the context may themselves depend on variables appearing earlier
in the context. 

For a concrete example of such judgements, the value $g(x, y) : \Rb$ of a metric at two tangent vectors $x$ and $y$ would be expressed as follows: 
\begin{mathpar}
M : \type{Manifold}, p : M, x : T_p M, y : T_p M \yields g(x, y) : \Rb.
\end{mathpar}
The power of type theory comes from this explicit treatment of free variables. 

When interpreting type theory in a topos $\Ea$, we interpret each context $\Gamma$ as an object of $\Ea$. The empty context, with no free variables, is interpreted as the terminal object of $\Ea$. We interpret a typing judgement $\Gamma \yields D \jtype$ as a map $\pi : (\Gamma,D) \to \Gamma$ into the interpretation of $\Gamma$ (which, when interpreting in a model category, should be a \emph{fibration} over the interpretation of $\Gamma$), 
and we interpret an element $\Gamma \yields d : D$ as a section of the interpretation of $D$. The context $\Gamma, x : D$ extended by a free variable of the type $D$ is interpreted as the domain $(\Gamma,D)$ of the interpretation of the type judgement $\Gamma \yields D \jtype$. Substitution of elements for variables is interpreted by pullback. In other words, if our type theory is interpreted into the $\infty$-topos $\Ea$, then statements in context $\Gamma$ are interpreted in the slice topos $\Ea_{/\Gamma}$ over the interpretation of $\Gamma$; we base-change between slice toposes by substituting in elements for variables. For a comprehensive introduction to the semantics of homotopy type theory, see \cite{Riehl:Semantics.of.HoTT}.

\begin{table}[H]\label{table:type.interpret}
\begin{tabular}{|c||c|}
  \hline
\begin{minipage}{7cm}
{\bf Type theory} (syntax)
\end{minipage}
&
\begin{minipage}{7cm}
{\bf Higher topos theory} (semantics)
\end{minipage}
\\
\hline
\hline
\begin{minipage}{7cm}
  \[\Gamma \yields D \jtype \]
  \end{minipage}
  &
  \begin{minipage}{7cm}
    \[
    \begin{tikzcd}
     (\Gamma,D) \ar[d] \\ \Gamma 
    \end{tikzcd}
    \]
  \end{minipage}
  \\
  \hline
  \begin{minipage}{7cm}
    \[\Gamma \yields d : D \]
    \end{minipage}
    &
    \begin{minipage}{7cm}
      \[
      \begin{tikzcd}
        (\Gamma,D) \ar[d] \ar[d, dashed, bend right, leftarrow, "d"']\\ \Gamma 
      \end{tikzcd}
      \]
    \end{minipage}
    \\
    \hline
    \begin{minipage}{7cm}
      \begin{mathpar}
\inferrule{\mbox{Given }\Delta \yields a : A \mbox{  and  }x : A \yields d(x) : D(x)}
{\mbox{We deduce    } \Delta \yields d(a) : D(a)}
      \end{mathpar}
      \end{minipage}
      & 
      \begin{minipage}{7cm}
        \[
        \begin{tikzcd}
          a^{\ast}(A,D) \ar[d] \ar[d, dashed, bend right, leftarrow, "a^{\ast}d"'] \ar[r] 
          \arrow["\lrcorner"{anchor=center, pos=0.125}, draw=none, from=1-1, to=2-2]
           & (A,D) \ar[d] \ar[d, bend right, leftarrow, "d"']\\ \Delta \ar[r, "a"] & A
        \end{tikzcd}
        \]
      \end{minipage}
      \\
      \hline
\end{tabular}
\end{table}

Formally, a type theory in the Martin-L{\"o}f tradition \cite{ML:type.theory} (which homotopy type theories are) is a system for deducing new typing and element judgements from others.
We begin by presuming a few basic types, of which the \emph{type universes} $\Type_{\ell}$ are the most central, and then give rules for constructing new types from existing types. 
A type universe $\Type_{\ell}$ is a type whose elements are types. They are defined by the simple rule that if $A : \Type_{\ell}$ then $A$ is a type\footnote{This is known as a ``Russell-style" universe. The somewhat more fiddly but also more exacting ``Tarski-style'' universes have a type $\type{El}(A)$ of elements of $A$ for any $A : \Type_{\ell}$. These details are unimportant for our development here.}; in other words, we have the judgement $A : \Type_{\ell} \yields A \jtype$. By the prescription above, we interpret this judgement as a map $\pi : \Type_{\ell\ast} \to \Type_{\ell}$ in an $\infty$-topos; for $\Type_{\ell}$ to be a type universe means that $\pi$ is an \emph{object classifier} in the sense of \cite[\S\~6.1.6]{Lurie:HTT} --- every map (with suitably small fibers) arises as a pullback of $\pi$ up to a contractible space of choices. We generally assume a tower of universes $\Type_{\ell} : \Type_{\ell + 1}$ (avoiding a paradoxical $\Type : \Type$); however, in this paper we will elide the universe level $\ell$, since it can be readily understood from context. From now on, we will just write $\Type$ to be the type universe. From now on, we will just use $\Gamma \yields A : \Type$ in the same way we have used the typing judgement $\Gamma \yields A \jtype$.

There are, generally speaking, four sorts of rules for constructing and using new types: the \emph{formation} rule, which gives the conditions under which we may construct the type; the \emph{introduction} rule, which gives the conditions for constructing elements of the type; the \emph{elimination} rule, which gives the conditions for using elements of the type to construct elements of other types; and the \emph{computation} rules, which define how the elimination rules compute \emph{definitionally} on introduced elements, and sometimes vice-versa. We use the symbol $\jeq$ to mean \emph{definitional} equality (as opposed to the symbol $=$ we will use for identification). The two most important type constructions are \emph{pair types} and \emph{function types}, which we will present simultaneously:
\begin{itemize}
  \item (Formation) Suppose that $B(x)$ is a type for a free variable $x : A$ (that is, $x : A \yields B(x) : \Type $); then we may form both the type of pairs $\dsum{x : A} B(x)$ and the type of functions $\dprod{x : A} B(x)$. In type theoretic notation:
  \begin{mathpar}
\inferrule{\Gamma,\, x : A \yields B(x) : \Type}{\Gamma \yields \dsum{x : A} B(x) : \Type} \and \inferrule{\Gamma,\, x : A \yields B(x) : \Type}{\Gamma \yields \dprod{x : A} B(x) : \Type}
  \end{mathpar}

  The horizontal bar here is a meta-theoretic implication: if we have made the judgement on the top of the bar, then we may make the judgement on the bottom of the bar.
  \item (Introduction) Given an element $a : A$ and an element $b : B(a)$, we may introduce the pair $(a, b) : \dsum{x : A} B(x)$. On the other hand, if $b(x) : B(x)$ for a free variable $x : A$, then we may introduce the function $x \mapsto b(x) : \dprod{x : A} B(x)$.
   \begin{mathpar}
    \inferrule{\Gamma \yields a : A \and \Gamma \yields b : B(a)}{\Gamma \yields (a, b) : \dsum{x : A} B(x)} \and \inferrule{\Gamma,\, x : A \yields b(x) : B(x)}{\Gamma \yields x \mapsto b(x) : \dprod{x : A} B(x)}
      \end{mathpar}
  \item (Elimination) Given an element $p : \dsum{x : A} B(x)$, we have its first projection $\fst p : A$ and its second projection $\snd p : B(\fst p)$. On the other hand, given $f : \dprod{x: A} B(x)$ and an element $a : A$, we may apply $f$ to $a$ to get $f(a) : B(a)$. 
  \begin{mathpar}
    \inferrule{\Gamma \yields p : \dsum{x : A} B(x)}{\Gamma \yields \fst p : A \and \Gamma \yields \snd p : B(\fst p)} \and \inferrule{\Gamma \yields f : \dprod{x : A} B(x) \and \Gamma \yields a : A}{\Gamma \yields f(a) : B(a)}
      \end{mathpar}
  \item (Computation) Finally, we add definitional equalities (denoted by the symbol $\jeq$) which relate the introduction and elimination rules. In particular, we define $\fst(a, b) \jeq a$, $\snd(a, b) \jeq b$, and $p \jeq (\fst p, \snd p)$ for pairs, and $(x \mapsto b(x))(a) \jeq b(a)$ and $f \jeq (x \mapsto f(x))$ for functions.
\end{itemize}

Semantically, if $\Gamma \yields A : \Type$ is interpreted as $\pi_A : (\Gamma,A) \to \Gamma$, then the pair types and function types are interpreted as the left and right adjoints respectively to the pullback functor $\pi_A^* : \Ea_{/\Gamma} \to \Ea_{/(\Gamma,A)}$.

\begin{notation}
  We will use the Agda-inspired notation for dependent pair types (also known as
  dependent sum types) and dependent function types (also known as dependent
  product types). Sum and product notation are also common in the literature:
  \begin{align*}
    \dsum{a : A} B(a) &\equiv \sum_{a : A} B(a) \\
    \dprod{a : A} B(a) &\equiv \prod_{a : A} B(a).
  \end{align*}

  \end{notation}

There is one remaining type constructor which we need: the type $x =_A y$ of ways to identify $x$ with $y$ as elements of type $A$. Different formalizations of homotopy type theory approach the definition of identity types differently; in this paper, we will use Martin L{\"o}f's identity types together with Voevodsky's \emph{univalence axiom}, following \cite{HoTTBook}.

\begin{itemize}
  \item (Formation) If $a : A$ and $b : A$ are elements of the same type, then we may form the type $a =_{A} b$ of identifications of $a$ with $b$ as elements of type $A$. In type theoretic notation:
  \begin{mathpar}
    \inferrule{\Gamma \yields a : A \and \Gamma \yields b : A}{\Gamma \yields a =_{A} b : \Type}
  \end{mathpar}
  \item (Introduction) If $a : A$, then we may reflexively identify $a$ with $a$ via an element $\refl_a : a =_A a$. 
  \begin{mathpar}
    \inferrule{\Gamma \yields a : A}{\Gamma \yields \refl_a : a =_A a }
  \end{mathpar}
  When $A$ is a type of structured sets, we may think of $\refl_a$ as the identity isomorphism on $a : A$.
  \item (Elimination) The elimination rule for identity types is known as \emph{(based) path induction}. Given $a : A$, to construct an element $c(x, p) : C(x, p)$ of any type $C(x, p)$ depending on variables $x : A$ and $p : a =_A x$, it suffices to assume that $x$ is $a$ and $p$ is $\refl_a$ and to construct an element of type $C(a, \refl_a)$. In type theoretic notation:
  \begin{mathpar}
    \inferrule{\Gamma \yields a : A \and \Gamma, x : A, p : a =_A x \yields C(x, p) : \Type \and \Gamma \yields c : C(a, \refl_a)}{\Gamma, x : A, p : a =_A x \yields \ind{\refl_a}{\,p}{c} : C(x, p)}
  \end{mathpar}
  \item (Computation) The computation rule for identity types says that $(\ind{\refl_a}{\,\refl_a}{c}) \jeq c$; when taking $p : a =_A x$ to be $\refl_A : a =_A a$, the elimination rule does not produce anything new.
\end{itemize}

Since we have $x : A,\, y : A \yields x =_A y : \Type$, we interpret the identity type as a fibration $\pi_{\Id_A} : \Id_A \to A \times A$. Per the formulation of \cref{table:type.interpret}, we then interpret $x : A \yields \refl_x : x =_A x$ as a section of the pullback of $\pi_{\Id_A}$ along the diagonal $\Delta : A \to A \times A$:
\[
\begin{tikzcd}
	{\Delta^{\ast}\Id_{A}} & {\Id_A} \\
	A & {A \times A}
	\arrow[from=1-1, to=1-2]
	\arrow[from=1-1, to=2-1]
	\arrow["\lrcorner"{anchor=center, pos=0.125}, draw=none, from=1-1, to=2-2]
	\arrow["{\pi_{\Id_A}}", from=1-2, to=2-2]
	\arrow["\refl", bend left, dashed, from=2-1, to=1-1]
	\arrow["\Delta"', from=2-1, to=2-2]
\end{tikzcd}
\]
This is equivalent to giving a map $\refl : A \to \Id_A$ so that $\pi_{\Id_A} \circ \refl = \Delta$; the elimination rule for identity types lets us see this factorization of the diagonal as giving $\Id_A$ the structure of a \emph{(very good) path object} for $A$. Specifically, the elimination rule guarentees a lift in the following square:
\[
\begin{tikzcd}
	A & C \\
	{\Id_A} & {\Id_A}
	\arrow["c", from=1-1, to=1-2]
	\arrow["\refl"', from=1-1, to=2-1]
	\arrow[from=1-2, to=2-2]
	\arrow[dashed, from=2-1, to=1-2]
	\arrow[equals, from=2-1, to=2-2]
\end{tikzcd}
\]
which we may think of as witnessing the fact that $\refl : A \to \Id_A$ is an acyclic cofibration (see \cite{awodey-warren:identity.types} for a model theoretic account).

Remarkably, the above rules are enough to equip every type $A$ with the structure of an $\infty$-groupoid, with hom $\infty$-groupoids given by the identity types $a =_A b$ \cite{awodey-warren:identity.types}. As an example, we may define the composition of identifications (generalizing the transitivity of equality) to be the following function:
\[
  p \mapsto q \mapsto \ind{\refl_b}{\,q}{p} : (a = b) \to ((b = c) \to (a = c)).
\]
That is, to define the function $q \mapsto p \bullet q : (b = c) \to (a = c)$ given by precomposing with identification $p : a = b$, it suffices by path induction to assume that $q : b = c$ is of the form $\refl_b : b = b$, in which case $p \bullet \refl_b$ is defined to be $p$. 

While this notion of identity type is sufficient to endow every type with the structure of an $\infty$-groupoid, it is not on its own strong enough to prove the naive desiderata of \ref{identification.desiderata}. For this, we need a further axiom: the \emph{univalence axiom} which says that identifications $A =_{\Type} B$ between types in a type universe are equivalent to \emph{equivalences} $A \simeq B$. Let's turn to understanding equivalence of types now.

\begin{rmk}
  There is also a family of type definitions known as \emph{higher inductive types} \cite{LUMSDAINE_2019}. These include the natural numbers (which is just an ``inductive type", and inspires the name ``inductive''), as well as pushouts and colimits of sequences. In general, higher inductive types are types freely generated by some elements and identifications between those elements (and perhaps identifications between those identifications, and so on). We will make use of some constructions made possible by higher inductive types, such as \emph{localization}, but will have no need for the general concept. 
\end{rmk}

\subsection{Uniqueness, equivalence, and univalence}

In this section, we will describe Voevodsky's famous \emph{univalence axiom} which ensures that the elements of the Martin-L{\"o}f identity types described above behave as \emph{identifications} in the naive sense described in \ref{identification.desiderata}. The content of this section appears already in Voevodsky's \emph{A very short note on the homotopy $\lambda$-calculus} \cite{voevodsky:short.note}.

Voevodsky considered his most important and novel contribution in foundational matters to be the definition of \emph{uniqueness} and the more general \emph{$h$-levels} which depend on it (see the upcoming \cref{sec:hlevel})\footnote{``The most important new concept of the Univalent Foundations is the concept of h-level.'' \cite{voevodsky:foundations.part.3}.}.
To even express the univalence axiom requires this notion of uniqueness.
The definition of uniqueness in homotopy type theory is straightforward: a type $A$ has a unique element if we have an element $a : A$, and for any other element $x : A$ we have an identification $c(x) : a =_A x$ of $a$ with $x$. This definition can itself be expressed as a type.

\begin{defn}\label{defn:unique.element}
Let $A$ be a type. Define the type $\exists! A$ by
\[
  \exists! A \jeq \dsum{a : A} \big(\dprod{x : A} (a =_A x)\big).
\]
We say that $A$ has a unique element if we have an element $(a, c) : \exists! A$. 
\end{defn}

\begin{warning}
It is far more common in the homotopy type theory literature to say that the type $A$ is \emph{contractible} (with center of contraction $a$ and contraction $c$) when $(a, c) : \exists! A$, and to write $\type{isContr}(A)$ for $\exists! A$. This is because when interpreted in the Kan complex model, a type $A$ has a unique element in the sense of \cref{defn:unique.element} just when its interpretation is contractible (equivalent to the point) as a Kan complex. 

However, we will avoid this terminology here because when working with smooth stacks --- which fully faithfully include not only homotopy types (as constant stacks) but also manifolds (as $0$-truncated stacks) --- there is a competing notion which we want to call ``contractible'': being weakly homotopy equivalent to the point as measured by smooth paths from the manifold $\Rb$. In particular, the manifold $\Rb$, considered as a smooth stack, is contractible in the usual sense, but since identification in $\Rb$ is simply equality of real numbers, it does not have a unique element in the sense of \cref{defn:unique.element}; after all, $0$ does not equal $1$, though they are connected by a path (unique up to smooth deformation). More on this distinction in \cref{sec:intro.modal}. 
\end{warning}

  \cref{defn:unique.element} is our first example of encoding a \emph{proposition} as a type in its own right; we'll see more on propositions-as-types in \cref{sec:hlevel}. 

What is perhaps surprising about the definition of uniqueness in HoTT is that it does not mean `unique up to \emph{some} identification'\footnote{For this notion in HoTT, see \cref{sec:hlevel}.}; rather it means `unique up to \emph{unique} identification'. That is, given $(a, c) : \exists! A$, we may also construct an element of $\exists!(a =_A x)$ for any $x : A$. 

With this definition of unique existence, we can now define an \emph{equivalence} between types in a manner remarkably reminiscent of a bijection of sets: a function $f : A \to B$ is an equivalence when there is a unique inverse image of any element in $B$. We need to give the appropriate definition of ``inverse image'' in homotopy type theory first.

\begin{defn}\label{defn:fiber}
 Let $f : A \to B$ be a function. The \emph{fiber} of $f$ over $b : B$ is the type of pairs of an $a : A$ and an identification $p : b =_B f(a)$.
 \[
  \fib_f(b) \jeq \dsum{a : A}(b =_B f(a)).
 \]
\end{defn}

\begin{rmk}
  Where we avoid the term ``contractible'' for uniqueness because it could have conflicting interpretations in the $\infty$-topos of smooth stacks, here we use the term fiber both in the sense of ``homotopy fiber'' and in the sense of ``fiber bundle''. The fiber (\cref{defn:fiber}) of a map between homotopy types (considered as constant smooth stacks) is its homotopy fiber, while the fiber of a map between manifolds (for example, a fiber bundle) is its fiber or inverse image in the geometric sense.
\end{rmk}

\begin{defn}\label{defn:equivalence}
  Let $f : A \to B$ be a function. Then $f$ is an \emph{equivalence} if for all $b : B$, the fiber $\fib_f(b)$ has a unique element. We can express the proposition that $f$ is an equivalence as a type itself:
  \[
    \type{isEquiv}(f) \jeq \dprod{b : B} \exists!\fib_f(b).
  \]
  We may therefore define the type of equivalences $A \simeq B$ to be the type of pairs of a function $f : A \to B$ and a \emph{witness} $w : \type{isEquiv}(f)$ that $f$ is an equivalence:
  \[
    (A \simeq B) \jeq \dsum{f : A \to B} \type{isEquiv}(f).
  \]
\end{defn}

This our second example of encoding a proposition as a type. Note that to carve out a subtype described by a proposition --- e.g. the type of equivalences from the type of functions --- we take the type of pairs of an element together with a witness that the proposition is true of that element. Having this witness around as a bona-fide element lets us use the truth of the proposition quite concretely; for example, if $(f, w) : A \simeq B$, then the inverse to $f$ encoded by the witness $w$ is the function $b \mapsto \fst \fst w(b)$ which selects out the (unique) inverse image to $b$ guarenteed by $w$.

In order to express the univalence axiom --- still the goal of this section --- we need to know that identity functions $(a \mapsto a) : A \to A$ are equivalences. That is, we need to give a function $w : \dprod{a : A} \exists! \fib_{\id}(a)$; in other words, we need to show that the singleton types $\fib_{\id}(a) \jeq \dsum{x : A} (a =_A x)$ have a unique element. This is essentially the content of \emph{path induction} --- when defining a function out of $\dsum{x: A}(a =_A x)$, it suffices to define it on the element $(a, \refl_a)$. In particular, we may prove that $\id_A : A \to A$ is an equivalence with the witness 
\[
  w(a) \jeq \big((a, \refl_a), (x, p) \mapsto \ind{\refl_a}{\,p}{\refl_{(a, \refl_a)}}\big).
\]

With this in hand, we may then express the univalence axiom.
\begin{axiom}[Univalence Axiom]\label{axiom:univalence}
  For any types $A, B : \Type$, the function
  \[
    p \mapsto \ind{\refl_A}{\,p}{(\id_A, w)} : (A =_{\Type} B) \to (A \simeq B)
  \]
  is an equivalence. That is, we have an element $\term{ua}_{A,B} : \type{isEquiv}(p \mapsto \ind{\refl_A}{\,p}{(\id_A, w)})$.
\end{axiom}
We may think of the univalence axiom as an axiom of \emph{type extensionality}: two types may be identified by a one-to-one correspondence between their elements. This axiom has a number of important corollaries which together imply that the types of identifications $x =_A y$ of mathematical structures $x$ and $y$ are given by \emph{structure preserving equivalences} --- a thesis known in the HoTT literature as the \emph{structure identity principle} \cite{aczel:on.voevodskys.univalence.axiom,ahrens-north:univalent.foundations}. 

\subsection{Propositions, sets, and truncation} \label{sec:hlevel}

We have so far seen two examples of propositions as types: the proposition $\exists! A$ that there is a unique element of $A$ and the proposition $\type{isEquiv}(f)$ that a function $f$ is an equivalence. To prove these propositions was to give an element of the corresponding type. Unlike set theory, which takes place within the framework of first order logic, by considering propositions as types we can reason about types within type theory itself --- type theory is its own logic. 

Indeed, the type constructions of pairs and functions specialize to propositions when encoded as types. If $P$ and $Q$ are propositions considered as types, then to prove the proposition ``$P$ and $Q$'' we must prove both $P$ and prove $Q$; for this reason, we may represent the proposition ``$P$ and $Q$'' by the pair type $P \times Q$, since an element $(p, q) : P \times Q$ is just a pair of an element $p : P$ (proving $P$) and $q : Q$ (proving $Q$). Similarly, to prove ``$P$ implies $Q$'', we must prove $Q$ under the hypothesis that $P$ is true; for this reason, we may represent the proposition ``$P$ implies $Q$'' by the function type $P \to Q$. If $P(x)$ is a family of propositions depending on a variable $x : X$, then the proposition $\forall x: X.\, P(x)$ may be represented by the dependent function type $\dprod{x : X}P(x)$; to give an element of $\dprod{x : X}P(x)$ is to give for any $x : X$ an element of $P(x)$, proving the corresponding proposition. The terminal type $\top$ with a unique element plays the role of true, and the empty type $\emptyset$ plays the role of false\footnote{These are both definable as inductive types.}. We define the negation $\neg P \jeq P \to \emptyset$ to be the proposition that $P$ implies false.

We might expect that the existential quantifier $\exists x: X.\, P(x)$ may dually be given by the pair type $\dsum{x : X}P(x)$; after all, to give an element of $\dsum{x : X}P(x)$ is to give a pair $(x, p)$ where $x : X$ and $p : P(x)$, giving a specific witness to the existence of an element of $X$ satisfying $P$. But we just saw how the pair type $\dsum{x : X}P(x)$ can be used to carve out a subtype with the type of equivalences $(A \simeq B) \jeq \dsum{ f : A \to B} \type{isEquiv}(f)$. While giving an element of $(A \simeq B)$ does prove that $A$ and $B$ are equivalent, it does more of that: it fixes a particular equivalence. The difference between proving the existence of an element of a type and actually giving an element of that type becomes much more stark when we work in the context of some free variables. For example, consider the statement that a manifold $M$ is $n$-dimensional: for every point $p$, there exists an $n$-element basis of the tangent space $T_p M$. It is not generally the case that we can give an element $f(p) : \type{Frame}(T_pM)$ of the set of bases of $T_p M$ depending on the free variable $p: M$, since this would be interpreted in the $\infty$-topos of smooth stacks as a smooth section of the frame bundle, which would trivialize the tangent bundle. In HoTT, a proof of existence without a given witness is called \emph{mere} existence.

Not all types are propositions --- some types are sets, like $\Nb$ and $\Rb$ or the set of bases of a vector space, and some types are groupoids, like the type of real vector spaces itself. What determines when a type is to be considered a proposition, a set, or a groupoid? To prove a proposition considered as a type $P$, we give an element $p : P$; if the type $P$ has at most one element, then to give an element is only to prove the proposition and nothing more. We therefore define the propositions to be those types $P$ which have at most one element. 

\begin{rmk}
  It is sometimes said of homotopy type theory that the elements of a proposition $P$ considered as a type are its proofs. But if we consider the elements of $P$ up to identification, then the very definition of a proposition ensures that any two of them are uniquely identified. This appears to contradicts the intuition that the elements of $P$ are its proofs, since there may be many different ways to prove a proposition. The idea that the elements of $P$ are its proofs is only true if we consider those elements up to judgemental or definitional equality; there may be many different ways to define an element of a type, even if many differently defined elements may then be identified. In type theory, the mathematically relevant notion of sameness is identification and not definitional equality; for this reason, it's better to think of an element $p : P$ as \emph{the fact that $P$ is true}, as in the phrase ``by the fact that $f$ is an equivalence, we may transport the relevant structure across it''. In this phrase, we are implicitly using the element $p : \type{isEquiv}(f)$ --- the fact that $f$ is an equivalence --- to perform a construction.
\end{rmk}

We may compare two elements of a set for equality, but there is no extra data involved in an equality --- if two elements are equal, then they are \emph{just equal}. Identification in a set should therefore be a \emph{proposition}. Similarly, groupoids are very often composed of sets equipped with some structure; identifying two elements of such a groupoid involves giving an isomorphism between these structured sets. Therefore, there will be a set of identifications in a groupoid. This gives us a general inductive definition of the \emph{$h$-level} or \emph{truncation level} of a type.

\begin{defn}\label{defn:truncation}
  We define what it means for a type $A$ to be $n$\emph{-truncated} by induction on $n$, starting at $n = -2$:
  \begin{itemize}
    \item A type $A$ is $(-2)$-truncated when it has a unique element.
    \item A type $A$ is $(n + 1)$-truncated when for any two elements $x, y : A$, the type $x =_A y$ of identifications is $n$-truncated.
    \[
      \type{isTruncated}_{n+1}(A) \jeq \dprod{x,\, y : A} \type{isTruncated}_n(x =_A y)
    \]
  \end{itemize}
  We say that $A$ is a \emph{proposition} when it is $-1$-truncated, a \emph{set} when it is $0$-truncated, and a \emph{groupoid} when it is $1$-truncated.
  \begin{align*}
    \type{isProp}(A) &\jeq \dprod{x,y : A} \exists!(x =_A y) \\
    \type{isSet}(A) &\jeq \dprod{x,y : A} \type{isProp}(x =_A y)
  \end{align*}
\end{defn}

Types like $\exists! A$ and $\type{isEquiv}(f)$ are propositions according to the above definition. Furthermore, if $P$ and $Q$ are propositions, then $P \times Q$ and $P \to Q$ are propositions, justifying our use of these types to represent conjunction and implication. Similarly, $\dprod{x : X} P(x)$ is a proposition whenever $P(x)$ is a proposition for all $x : X$. The type of natural numbers $\Nb$ is provably a set, and the type of vector spaces is provably a groupoid.

For any type $A$, we have a $n$-truncated type $\trunc{A}_n$ called the
$n$-truncation of $A$, which is equipped with a map $|\cdot|_n : A \to
\trunc{A}_n$ that is the initial map from $A$ to an $n$-truncated
type\footnote{The $n$-truncation may be constructed by \emph{localization}; see
  Section 2 of \cite{RSS:Modalities}.}. The truncation operation $A \mapsto \trunc{A}_n$ is
an example of a \emph{modality} \cite{RSS:Modalities}; it may be equivalently
described by the stable orthogonal factorization system which factors any
function as an $n$-connected function (one for which $\forall b : B.\exists!
\trunc{\fib_f(b)}_n$) and an $n$-truncated function (one with $n$-truncated
fibers). This is the guise in which modalities appear in higher category theory; see \cite{ABFJ:blakers.massey}.

The propositional truncation $\exists A \jeq \trunc{A}_{-1}$ gives a definition of the existential quantifier: to give an element of $\exists A$ is to prove that an element of $A$ exists without giving a specific example. The universal property of $|\cdot|_{-1} : A \to \exists A$ as the initial map into a proposition
\[
\begin{tikzcd}
	A & P \\
	{\exists A}
	\arrow["\forall", from=1-1, to=1-2]
	\arrow["{|\cdot|_{-1}}"', from=1-1, to=2-1]
	\arrow["{\exists!}"', dashed, from=2-1, to=1-2]
\end{tikzcd}
\]
expresses the following familiar proof principle: to prove a proposition $P$ assuming that there exists an element of $A$, we may prove $P$ under the assumption that we have a free variable $x : A$. We may then define the usual existential quantifier $\exists x : X,\, P(x)$ to be the propositional truncation $\exists\big(\dsum{x : X} P(x)\big)$ of the type of pairs --- proving $\exists x : X,\, P(x)$ is the same as showing that there exists a pair $(x, p)$ with $x : X$ and $p : P(x)$ proving $P$ of $x$.

With the definition of sets and the existential quantifier, we can now define the type $\B U(1)$ of Hermitian lines --- that is, one dimensional Hermitian vector spaces --- using pairs, functions, identity types, and the propositional truncation as above, as well as assuming the existence of the smooth real line $\Rb$ from which we may define $\Cb \jeq \Rb \oplus i \Rb$ as in \cref{assumption:SDG}.

\begin{equation}\label{defn:BU}
  \B U(1) \jeq \left\{\begin{aligned}
    (L : \Type) &\times \type{isSet}(L) \\
    &\times (+ : L \times L \to L) \times (0 : L) \\
    &\times (\cdot : \Cb \times L \to L) \\
    &\times (\term{assoc} : \dprod{a,b,c : L} (a + b) + c = a + (b + c)) \\
    &\times (\term{zero} : \dprod{a : L} a + 0 = a) \\
    &\times (\term{comm} : \dprod{a, b : L} a + b = b + a) \\
    &\times (\term{unit} : \dprod{a : L} 1 \cdot a = a) \\
    &\times (\term{scalarassoc} : \dprod{u,v :\Cb}\dprod{a : L} u \cdot (v \cdot a) = (uv) \cdot a) \\
    &\times (\term{scalarzero} : \dprod{a : L} 0 \cdot a = 0)\\
    &\times (\term{dim} : \exists (\dsum{b : L} \neg(b = 0) \times (\dprod{a :  L} \dsum{c : \Cb} (a = c \cdot b)))) \\
    &\times (\langle -, - \rangle : L \times L \to \Cb) \\
    &\times (\term{leftlinear} : \dprod{c, d : \Cb}\dprod{a, b, z : L} (\langle ca + db, z \rangle = c\langle a, z\rangle + d \langle b, z \rangle ))\\
    &\times (\term{sesqui} : \dprod{a, b : L} \langle a, b \rangle = \overline{\langle b, a \rangle})\\
    &\times (\term{positive} : \dprod{a : L} \langle a, a\rangle \geq 0)\\
    &\times (\term{definite} : \dprod{a : L} (\dprod{b : L} \langle a, b\rangle = 0) \to (a = 0))
  \end{aligned}\right.
\end{equation}
The type $(L_1, \cdots) =_{\B U(1)} (L_2, \cdot)$ of identifications between two Hermitian lines is equivalent to the type of unitary isomorphisms betwen them. In particular, $(\Cb, \cdots) =_{\B U(1)} (\Cb, \cdot)$ is equivalent to $U(1)$, justifying the name $\B U(1)$ for the type of Hermitian lines --- it deloops the group $U(1)$. 

When interpreted in the $\infty$-topos of smooth stacks, this definition of $\B U(1)$ becomes the moduli stack of Hermitian line bundles described in \cref{sec:convenient}. In general, to construct the moduli stack of $X$-bundles in homotopy type theory, we just write down the definition of an $X$. This is one way that homotopy type theory makes working with stacks remarkably concrete.

\subsection{The homotopy type of a stack and crisp type theory}
\label{sec:intro.modal}

You might now be wondering: where is the ``homotopy'' in homotopy type theory?
Identifications between mathematical objects are, generally speaking, the
morphisms of higher groupoids. Grothendieck's \emph{homotopy hypothesis} posits
that higher groupoids are equivalently the homotopy types of topological spaces.
In other words, we can model identifications between mathematical objects as
homotopies (in the geometric realization of the nerve of the higher groupoid of
such objects). This is the sense in which ``homotopy'' is used in
the term ``homotopy type theory'', rather than in the literal sense of continuous
deformation between continuous functions. More formally, homotopy type theory admits a model in the Kan model structure on simplicial sets which interprets every type as a Kan complex. We emphasize that despite modelling homotopy theory, Kan complexes are \emph{discrete} combinatorial objects.

 But, since homotopy type theory can be
interpreted in higher toposes of stacks on general sites, we can also talk
about continuous deformation of maps so long as our site consists of suitably
continuous objects. In particular, in a topos of smooth stacks, we have the usual notion of homotopy $h : \Rb \times X \to Y$ between two maps $f = h(0, -)$ and $g = h(1, -)$, where $\Rb$ is the smooth manifold of reals considered as a smooth stack representably. 

These two interpretations of ``homotopy'' are \emph{orthogonal} in the $\infty$-topos of smooth stacks in a precise sense. In particular, if $X$ is a (geometrically discrete) homotopy type considered as a constant stack, then any map $p : \Rb \to X$ is constant. Even more, a stack $X$ is constant if and only if the inclusion of constants $X \to X^{\Rb}$ is an equivalence of stacks. On the other hand, $\Rb$ is a \emph{set} ($0$-truncated) as in \cref{defn:truncation} --- it is a sheaf valued in sets with no higher homotopies. In order to internalize this opposition between discrete and continuous objects in type theory, we will use Shulman's \emph{real cohesive homotopy type theory} \cite{Shulman:Real.Cohesion}. 

In type theory (and constructive mathematics generally), the proposition that all functions $\Rb \to \Rb$
are continuous is undecided --- there are models of HoTT in which every function $\Rb \to \Rb$ is continuous (and,
of course, familiar models where there are discontinuous functions $\Rb \to \Rb$). In smooth stacks, every function $\Rb \to \Rb$ is even smooth as a consequence of the Yoneda lemma.
Since in type theory such a function $f : \Rb \to \Rb$ is defined by giving
the image $f(x)$ of a free variable $x : \Rb$, we see that in a pure
constructive setting, the dependence of terms on their free variables confers a
liminal sort of continuity. This is a very powerful observation which, in its
various guises, lets us avoid the menial checking of continuity, smoothness,
regularity, and so on for functions in various models of
homotopy type theory. It extends far beyond real valued functions; for example,
the assignment of a vector space $V_p$ to a point $p$ in a manifold $M$,
constructively, gives a vector bundle $\dsum{p : M} V_p \to M$ over $M$ with all
its requirements of continuity or smoothness (depending on the model)\footnote{For local triviality, one must ask for a \emph{free} vector space --- ones for which there merely exists a basis. Without the axiom of choice, one cannot prove that every vector space admits a basis.}. Since
all sorts of continuity (open-set continuity, smoothness, regularity, analyticity) can be
captured in various models, Lawvere named the general notion ``cohesion'' in his
paper \cite{Lawvere:Axiomatic.Cohesion}, whose generalization to
$\infty$-categories in \cite{Schreiber:Differential.Cohomology} inspired the type
theory of \cite{Shulman:Real.Cohesion}.

Using a free variable in type theory confers a liminal sort of continuity in that variable, but not every dependence on a variable \emph{should} be continuous (smooth, analytic, etc.).  To allow for
discontinuous dependencies, then, we must mark our free variables as varying
cohesively or not. For this reason, Shulman introduces \emph{crisp}
variables, which are free variables in which elements depend discontinuously:
\[
a :: A.
\]
For emphasis, we will say that a variable
which is not crisp is \emph{cohesive}.
Any variable appearing in the type of a crisp variable must also be crisp. To
ensure this, we separate our context --- our list of typed free variables --- into two
chambers: one for the crisp variables, and one for the cohesive variables:
\[
  x :: X \,\mid\, y : Y(x),\, z : Z(x, y) \yields a : A
\]

By ensuring that the crisp variables appear before the cohesive variables, we
enforce the rule that the type of a crisp variable must only involve crisp
variables, while the type of a cohesive variable may involve both crisp and
cohesive variables.

When all the variables in an expression are crisp, we say that
that expression is crisp. A crisp expression has a context with an empty cohesive
chamber:
\[
\Delta \,\mid\, \cdot \yields a : A
\]

Constants appearing in
an empty context --- expressions with no free variables like $0 : \Nb$ or $\Nb : \Type$ ---  are therefore always crisp.\footnote{Note that as these are
  expressions and not free variables, we don't need to use the special syntax $a ::
  A$. The double colon introduces a crisp \emph{free} variable.} This means that
one cannot give a closed form example of a term which is \emph{not} crisp; all
terms with no free variables are crisp. 

We may only substitute crisp expressions in for
crisp variables. If we could substitute cohesive variables in for crisp variables,
then any discontinuous dependence would be continuous.
 The full rules for crisp type theory can be found in Section 2 of \cite{Shulman:Real.Cohesion}.

Given this notion of discontinuous dependence of terms on their crisp free variables,
we can now describe an operation on types which removes the cohesion amongst their
points. Given a \emph{crisp} type $X$, we have a type $\flat X$ whose points
are, in a sense, the \emph{crisp} points of $X$. Since it is free variables that
may be crisp, we express this idea by allowing ourselves to assume that a
(cohesive) variable $x : \flat X$ is of the form $u^{\flat}$ for a \emph{crisp}
$u :: X$. Here are the rules for $\flat$.

\begin{itemize}
  \item (Formation) If $X$ is a crisp type (depending only on crisp variables),
    then we may form $\flat X$:

    \begin{mathpar}
      \inferrule{\Delta \,\mid\, \cdot \yields X : \Type}{\Delta \,\mid\, \Gamma \yields
        \flat X : \Type}
    \end{mathpar}
  \item (Introduction) If $x$ is a crisp element of $X$, then we may introduce
    $x^{\flat} : \flat X$:

    \begin{mathpar}
      \inferrule{\Delta \,\mid\, \cdot \yields x : X}{\Delta \,\mid\, \Gamma \yields
      x^{\flat} : \flat X}
     \end{mathpar}

   \item (Elimination) When using a free variable $x : \flat X$ to construct an
     element, we may assume that $x$ is of the form $u^{\flat}$ for a
     \emph{crisp} variable $u :: X$. More formally, whenever we have type family $C : \flat X \to \Type$, an $x : \flat X$, and an element $f(u) : C(u^{\flat})$ depending on a \emph{crisp} $u :: X$, we get an element
     $$( \mbox{let $u^{\flat} := x$ in $f(u)$} ) : C(x).$$
     We refer to this method of proof as ``$\flat$-induction''. In type theoretic notation:

       \begin{mathpar}
\inferrule{\Delta \,\mid\, \Gamma, x : \flat X \yields C(x) : \Type \and \Delta, u :: A
  \,\mid\, \Gamma \yields f(u) : C(u^{\flat})}{\Delta \,\mid\, x : \flat X \yields \mbox{let $u^{\flat} := x$ in $f(u)$} : C(x)}
       \end{mathpar}

   \item (Computation) If $x \equiv v^{\flat}$, then $(\mbox{let $u^{\flat} := x$ in $f(u)$})
     \equiv f(v)$.
\end{itemize}

To interpret crisp type theory into an $\infty$-topos $\Ea$, we must now remember the global sections geometric morphism $\gamma : \Ea \to \Sa$ to the $\infty$-topos of homotopy types. When $\Ea$ is $\infty$-connected --- that is, when $\gamma^{\ast}$ is fully faithful --- we may interpret $\flat X$ as the constant stack $\gamma^{\ast} \gamma_{\ast} X$ at the global sections of $X$. In particular, if $\Ea$ is the $\infty$-topos of smooth stacks and $M$ is a manifold, then $\flat M$ will be the constant sheaf at the (discrete) set of points of $M$. If $\B U(1)$ is the classifying stack of Hermitian line bundles, then $\flat \B U(1)$ will be the constant stack at the (nerve of the) groupoid of Hermitian lines. 

We have an inclusion $(-)_{\flat} : \flat X \to X$ given by $x_{\flat} :\equiv
(\mbox{let $u^{\flat} := x$ in $u$})$. Since we are thinking of a dependence on a
crisp variable as a \emph{discontinuous} dependence, if this map $(-)_{\flat} :
\flat X \to X$
is an equivalence then every \emph{discontinuous} dependence on $x :: X$
underlies a \emph{continuous} dependence on $x$ --- that is, any discontinous
function on $X$ is already continuous. This leads us to the following definition:
\begin{defn}
  A crisp type $X :: \Type$ is \emph{crisply discrete} if the counit
  $(-)_{\flat} : \flat X \to X$ is an equivalence.\footnote{See Remark 6.13 of
    \cite{Shulman:Real.Cohesion} for a discussion on some of the subtleties in
    the notion of crisp discreteness.}
\end{defn}

Note that this definition is only sensible for crisp types, since we may only
form $\flat X$ for crisp $X :: \Type$. We would also like a notion of
discreteness which applies to any type. In the $\infty$-topos of smooth stacks,
we can give such a definition: a type $X$ is discrete if the inclusion of
constant functions $X \to (\Rb \to X)$ is an equivalence. Such types are said to
be $\Rb$\emph{-local}, and by the theory of localization in HoTT (see Section 2
of \cite{RSS:Modalities}), we can always define a modality $X \mapsto \shape X$
where $\shape X$ is $\Rb$-local and there is a unit $(-)^{\shape} : X \to \shape
X$ which is initial amongst maps into $\Rb$-local types.

Localization by $\Rb$ is known as the \emph{shape} modality; it sends every
manifold to the constant stack at the homotopy type of that manifold (see
Proposition 4.3.29 of \cite{Schreiber:Differential.Cohomology} for a proof of
this fact). This generalizes to higher types as well; 
for example $\shape \B U(1) \simeq \B^2 \Zb$, where the former is the module stack of Hermitian line bundles and the latter is the constant stack at a double delooping of the integers.

However, for the abstract portion of this paper (\cref{sec:modal.fracture}), we will only need the existence of the
modality $\shape$ and not that it is given by localization at $\Rb$. We will
nevertheless refer to the $\shape$-modal types as discrete. To make sure that
the two notions of discreteness coincide, we assume the following axiom:

\begin{axiom}[Unity of Opposites]\label{ax:unity_of_opposites}
For any \emph{crisp} type $X$, the counit $(-)^{\flat} : \flat X \to X$ is an
equivalence if and only if the unit $(-)^{\shape} : X \to \shape X$ is an equivalence.
\end{axiom}

This axiom implies that $\shape$ is left adjoint to $\flat$, at least for crisp
maps. In \cite{Shulman:Real.Cohesion}, Shulman assumes an axiom C0 which lets him
define the $\shape$ modality as a localization and prove our Unity of Opposites
axiom. The two axioms have roughly the same strength, though C0 is slightly
stronger since it assumes that $\shape$ is an \emph{accessible} modality (see \cite[\S~2.3]{RSS:Modalities}).

\begin{thm}\label{thm:adjointness_of_opposites}
  Let $X$ and $Y$ be crisp types. Then
  $$\flat(X \to \flat Y) = \flat(\shape X \to Y).$$
\end{thm}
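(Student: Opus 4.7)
The plan is to show the equivalence by reducing both sides, via $\flat$-induction, to classes of crisp functions, and then bridging them using the modal adjunction implicit in Unity of Opposites. Concretely, $\flat(X \to \flat Y)$ is equivalent data to a crisp function $X \to \flat Y$, and $\flat(\shape X \to Y)$ to a crisp function $\shape X \to Y$; the task becomes exhibiting a natural bijection between these two classes of crisp functions.

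First I would record two consequences of Unity of Opposites. Since $\flat Y$ is crisp and $\flat$-modal (by idempotence of $\flat$), it is also $\shape$-modal, so the unit $\eta_X : X \to \shape X$ induces a bijection between maps $X \to \flat Y$ and maps $\shape X \to \flat Y$ (this bijection is itself crisp, as it is given by a universal property). Dually, since $\shape X$ is crisp and $\shape$-modal, it is $\flat$-modal, so $(-)_\flat : \flat \shape X \to \shape X$ is an equivalence; let $\iota$ denote its inverse.

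Using these, the forward map sends a crisp $f :: X \to \flat Y$ to the crisp map $(-)_\flat \circ \tilde{f} :: \shape X \to Y$, where $\tilde{f} :: \shape X \to \flat Y$ is the unique extension of $f$ along $\eta_X$ supplied by the $\shape$-modality of $\flat Y$. The inverse sends a crisp $g :: \shape X \to Y$ to the crisp map $\flat g \circ \iota \circ \eta_X :: X \to \flat Y$, obtained by applying $\flat$-functoriality to $g$ and then transporting along $\iota$. Packaging with $\flat$-induction yields the claimed equivalence of types.

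Checking mutual inverseness reduces, on one round trip, to the uniqueness clause of the $\shape$-extension; on the other, it reduces to the identity $((-)_\flat)^{\flat} = \id_{\flat Y}$, which is immediate from $\flat$-induction on $\flat Y$. The main obstacle I foresee is purely bookkeeping: ensuring each intermediate construction is itself crisp, so that $\flat$-induction and the modal universal properties remain applicable throughout. Conceptually the argument is entirely driven by the two halves of Unity of Opposites, which together internalize the adjunction $\shape \dashv \flat$ at the level of crisp function types.
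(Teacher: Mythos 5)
The paper offers no proof of its own here: it cites Theorem~9.15 of Shulman's \emph{Real Cohesion} paper and merely remarks that Axiom C0 enters only through the Unity of Opposites axiom. Your proposal reconstructs a self-contained argument, and the argument is correct. You correctly identify the two halves of the bridge: (i)~$\flat Y$ is crisp and $\flat$-modal, hence by Unity of Opposites $\shape$-modal, so precomposition with $\eta_X : X \to \shape X$ is an equivalence $(\shape X \to \flat Y) \simeq (X \to \flat Y)$; and (ii)~$\shape X$ is crisp and $\shape$-modal, hence by Unity of Opposites $\flat$-modal, so the counit $\flat \shape X \to \shape X$ is invertible, letting you transport $\flat g : \flat \shape X \to \flat Y$ across to a map $\shape X \to \flat Y$. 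Chaining (i) and (ii) (in the second case postcomposing with $(-)_\flat : \flat Y \to Y$) yields the crisp bijection, and wrapping in $\flat$-induction produces the stated equivalence of $\flat$-types. This is the standard way one internalizes the adjunction between an idempotent monad and comonad that carve out the same subcategory of modal types, and it is in substance what Shulman's cited theorem does.

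Two small points of precision. First, your gloss on the round-trip checks is a bit loose: both directions actually use naturality of the $\flat$-counit (to commute $(-)_\flat$ past $\flat g$ or $\flat \tilde f$), and only one of them additionally invokes uniqueness of the $\shape$-extension, while the other needs the inverse identity $(-)_\flat \circ \iota = \id_{\shape X}$; the identity you write as $((-)_\flat)^\flat = \id_{\flat Y}$ doesn't obviously typecheck in Shulman's crisp syntax, since $(-)^\flat$ binds a \emph{crisp} term and the output of $(-)_\flat$ on a cohesive variable is not crisp. The intended content (idempotence: the counit on $\flat Y$ is an equivalence, and the induced ``promotion'' followed by the counit is the identity) is correct, but would need to be phrased via $\flat$-induction. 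Second, the crispness bookkeeping you flag as ``the main obstacle'' is genuinely the place to be careful: the $\shape$-extension operator and the inverse $\iota$ of the counit on $\flat\shape X$ are closed (or crisp-context) constructions, so applied to crisp inputs they produce crisp outputs, which is exactly what $\flat$-induction requires. With those two refinements spelled out, the argument is complete.
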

\begin{proof}
This is Theorem 9.15 of \cite{Shulman:Real.Cohesion}. Note that Axiom C0 is only
used via our Unity of Opposites axiom.
\end{proof}

Semantically, \cref{ax:unity_of_opposites} is the assumption that $\Ea$ is \emph{locally $\infty$-connected}: that is, the inclusion $\gamma^{\ast} : \Sa \to \Ea$ of constant stacks has a left adjoint $\Pi_{\infty} : \Ea \to \Sa$. In the case of smooth stacks, $\Pi_{\infty}$ sends a manifold to its homotopy type --- its fundamental $\infty$-groupoid. We then interpret $\shape X$ as $\gamma^{\ast} \Pi_{\infty} X$, the constant stack at the fundamental $\infty$-groupoid of $X$.

We may also define the \emph{truncated shape modalities} $\shape_n$ to have as
modal types those types which are both $n$-truncated and $\shape$-modal. In particular, $\shape_0 X$ is the set of connected components of $X$, and $\shape_1 X$ is the fundamental groupoid of $X$. It is not
known whether $\shape_n X = \trunc{\shape X}_n$ for general (cohesively varying) $X$, but it is true
for \emph{crisp} $X$ (see Proposition 4.5 of \cite{Jaz:Good.Fibrations}).

\begin{rmk}\label{rmk:flat_name}
Theorem \ref{thm:adjointness_of_opposites} justifies the use of the symbol
``$\flat$'' in flat type theory. If we think of $\shape X$ as the homotopy type
of $X$, then the adjointness of $\shape$ with $\flat$ tells us that $\flat \B G$
modulates principal $G$-bundles with a \emph{homotopy invariant parallel
  transport} --- that is, bundles with \emph{flat} connection. This terminology
is due to Schreiber in \cite{Schreiber:Differential.Cohomology}. 
In particular, $\flat \B U(1)$ represents Hermitian line bundles with flat connection; adjointness states that any such bundle on a manifold $M$ is determined by its homotopy invariant parallel transport, understood as a representation of the fundamental groupoid $\shape_1 M$
\end{rmk}

\begin{rmk}
Since this paper was first drafted, there have been many developments in modal homotopy type theory. In particular, there are now modal type theories with semantics given by certain structured 2-functors from a 2-category of modes into the 2-category of categories \cite{Shulman:MATT}. As described in that paper, Shulman's multimodal adjoint type theory subsumes Shulman's crisp type theory used in this paper, with some inessential changes to syntax. However, the semantics of univalent universes in modal type theory remains, to the author's knowledge, not fully settled. 
\end{rmk}

\begin{rmk}
The proof assistant Agda 2.6 \cite{Agda:cohesive} has a compiler flag, ``\begin{verbatim}
  --cohesion
\end{verbatim}'' which enables the use of the flat modality. It can be very helpful to use Agda to get comfortable with crisp variables. While this paper has not been formalized in Agda, there should be no obstruction to doing so.
\end{rmk}

With this background in order, we can now proceed to the novel content of this paper. 

\section{The Modal Fracture Hexagon}\label{sec:modal.fracture}

In this section, we will construct the modal fracture hexagon of a higher group.

A higher group $G$ is a type equipped with a $0$-connected delooping $\B G$. An
ordinary group $G$ may be considered as a higher group by taking
$\B G$ to be the type of $G$-torsors and identifying $G$ with the group of automorhpisms of
$G$ considered as a $G$-torsor. We may also use other types for our deloopings; for example, the type $\B U(1)$ of Hermitian lines (\ref{defn:BU}) deloops the circle group $U(1)$, and the function sending a Hermitian line to its unit circle gives an equivalence of this type with the type of $U(1)$ torsors.

The theory of higher groups is expressed in terms of their deloopings: for example, a
homomorphism $G \to H$ is defined to be a pointed map $\B G \pto \B H$. See
\cite{Buchholtz-vanDoorn-Rijke:Higher.Groups} for a development of the
elementary theory of higher groups in homotopy type theory.

The modal fracture hexagon associated to a (crisp) higher group $G$ will factor
$G$ into its \emph{universal $\infty$-cover} $\inftycover{G}$ (\cref{defn:universal.infty.cover}) and its
\emph{infinitesimal remainder} $\lie{g}$ (\cref{defn:infinitesimal.remainder}). We will now introduce
$\inftycover{G}$ and $\lie{g}$ and prove some lemmas about them which will set
the stage for the modal fracture hexagon.

\subsection{Preliminaries}

In this section, we will prepare a few necessary lemmas concerning the $\flat$ comodality and $\shape$ modality in Shulman's flat type theory
\cite{Shulman:Real.Cohesion}. A first time reader may content
themselves with the the statements of the lemmas, as the proofs are mere technicalities.

\begin{lem}\label{lem:flat_fiber_sequenes}
  The comodality $\flat$ preserves fiber sequences. Let $f :: X \to Y$ be a
  crisp map and $y :: Y$ a crisp point. Then we have and equivalence $\flat
  \fib_f(y) = \fib_{\flat f}(y^{\flat})$ such that
\[
\begin{tikzcd}[row sep = small]
\flat \fib_f(y) \ar[dr, "(-)_{\flat}"] \ar[dd, equals] & \\
& \fib_f(y) \\
\fib_{\flat f}(y^{\flat}) \ar[ur, "\delta"']
\end{tikzcd}
\]
commutes. In particular, $\flat \fib_f(y) \to \flat X \to \flat Y$ is a fiber sequence and
that the naturality squares give a map of fiber sequences:
\[
  \begin{tikzcd}
   \flat \fib_f(y) \ar[r, "\delta"] \ar[d] & \fib_f(y) \ar[d] \\
   \flat X \ar[r] \ar[d] & X \ar[d] \\
   \flat Y \ar[r] & Y
  \end{tikzcd}
\]
\end{lem}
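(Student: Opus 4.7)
The plan is to construct an equivalence $\alpha : \flat \fib_f(y) \to \fib_{\flat f}(y^\flat)$ by $\flat$-induction, verify the triangle commutes on crisp representatives, and then deduce it is an equivalence from the $\flat$-modality of the target. Since $f :: X \to Y$ and $y :: Y$ are crisp, the fiber $\fib_f(y) \equiv \sum_{x:X}(f(x) = y)$ is also crisp, so $\flat \fib_f(y)$ is well-defined and everything that follows is admissible.

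First I would define $\alpha$ as follows: given $z : \flat \fib_f(y)$, apply $\flat$-induction to assume $z \equiv (x, p)^\flat$ with crisp $x :: X$ and $p :: f(x) = y$, and set
\[
  \alpha(z) \;:\equiv\; (x^\flat,\; \ap_{(-)^\flat}(p)).
\]
This is well-typed because $\flat f(x^\flat) \equiv f(x)^\flat$ by how $\flat$ acts on crisp functions, so $\ap_{(-)^\flat}(p)$ has type $\flat f(x^\flat) = y^\flat$. The triangle in the statement then commutes essentially definitionally: on the representative $z \equiv (x,p)^\flat$, the map $\delta$ sends $\alpha(z) = (x^\flat, \ap_{(-)^\flat}(p))$ back to $(x, p)$, which is precisely $z_\flat$.

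Next I would argue that the target $\fib_{\flat f}(y^\flat)$ is itself $\flat$-modal. Its base $\flat X$ is $\flat$-modal, and for each $x' : \flat X$ the identity type $\flat f(x') = y^\flat$ is an identity type of the $\flat$-modal type $\flat Y$ and hence is $\flat$-modal; a dependent sum of $\flat$-modal fibers over a $\flat$-modal base is again $\flat$-modal. With this in hand I would construct an explicit inverse to $\alpha$: given $(x', q) : \fib_{\flat f}(y^\flat)$, $\flat$-induct on $x'$ to reduce to $x' \equiv u^\flat$ for crisp $u :: X$, lift $q : f(u)^\flat = y^\flat$ along the standard equivalence $\flat(f(u) = y) \simeq (f(u)^\flat = y^\flat)$ (valid because both endpoints are crisp) to a crisp path $\tilde p :: f(u) = y$, and return $(u, \tilde p)^\flat$. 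The round-trip identities are then checked on crisp representatives.

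The main obstacle I anticipate is bureaucratic rather than conceptual: one has to juggle $\flat$-induction simultaneously for the underlying point, the witnessing path, and the sum structure, invoking at each step an appropriate closure property of $\flat$-modal types (closure under dependent sums, under identity types, and the path-lifting equivalence for crisp endpoints). Once $\alpha$ is established as an equivalence, the rest of the lemma is immediate: the lower two squares in the diagram are naturality squares for the counit $(-)_\flat$ at the maps $f$ and $y$, which commute on the nose, and composing with $\alpha$ identifies $\flat \fib_f(y) \to \flat X \to \flat Y$ with the tautological fiber sequence $\fib_{\flat f}(y^\flat) \to \flat X \to \flat Y$, producing the claimed map of fiber sequences.
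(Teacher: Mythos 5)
Your approach is genuinely different from the paper's. The paper assembles the equivalence $\flat\fib_f(y) \simeq \fib_{\flat f}(y^{\flat})$ by stringing together known lemmas: $\flat$ commutes with $\Sigma$-types (Shulman's Lemma 6.8), $\flat$ commutes with crisp identity types (Shulman's Theorem 6.1), and the definitional behaviour of $\flat f$ (Shulman's Lemma 4.4). This yields the equivalence automatically, and the only remaining labour is to unwind what the composite does on crisp representatives $(x,p)^{\flat}$ so that the triangle can be checked. You instead build the forward map $\alpha$ directly by $\flat$-induction and then hand-craft an inverse via a second round of $\flat$-induction (once on $\flat X$, once more to extract a crisp path from the identity type in $\flat Y$). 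Both routes are sound; the paper's is more modular and sidesteps verifying the round-trip identities, while yours is more self-contained and makes the crisp-representative computation the central event rather than an afterthought.

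A few technical points are worth tightening. First, the claim that the triangle commutes ``essentially definitionally'' is slightly overstated: after reducing $x^{\flat}{}_{\flat} \equiv x$ and observing that the naturality square $\square_{\flat}(x^{\flat})$ is $\refl$, you still need a crisp path induction to cancel the two applications (of $(-)^{\flat}$ and $(-)_{\flat}$), exactly as the paper notes. Second, the phrase ``$\flat$-modal'' is not standard, and the closure properties you invoke (identity types, dependent sums) belong to modalities such as $\shape$; to transfer them to the $\flat$-counit being an equivalence you must go through Unity of Opposites, which requires crispness hypotheses that you should make explicit. Moreover, as sketched, the discreteness of $\fib_{\flat f}(y^{\flat})$ is never actually used in your inverse construction --- the explicit inverse and its round-trip checks are what establish the equivalence --- so this paragraph is somewhat vestigial. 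Third, the notation $\ap_{(-)^{\flat}}(p)$ is imprecise since $(-)^{\flat}$ is not a function in the type theory; what you mean is the operation sending a crisp path $p :: a = b$ to a path $a^{\flat} = b^{\flat}$, which is obtained from Shulman's Theorem 6.1 applied to $p^{\flat}$ (the paper writes this as $\ap_{\flat}\,(-)^{\flat}\,p$). None of these are fatal gaps, but they would need to be cleaned up for the proof to go through carefully.
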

\begin{proof}
  We begin by constructing the equivalence:
  \begin{align*}
    \flat \fib_f(y) &\equiv \flat \left( \dsum{x : X} (f(x) = y) \right) \\
                    &= \dsum{u : \flat X} (\mbox{let $x^{\flat} := u$ in $\flat(f(x) = y)$}) &\mbox{\cite[Lemma.~6.8]{Shulman:Real.Cohesion}} \\
                    &= \dsum{u : \flat X} (\mbox{let $x^{\flat} := u$ in $f(x)^{\flat} = y^{\flat}$}) &\mbox{\cite[Theorem.~6.1]{Shulman:Real.Cohesion}}  \\
                    &\equiv \dsum{u : \flat X} (\mbox{let $x^{\flat} := u$ in $\flat f(x^{\flat}) = y^{\flat}$}) \\
                    &= \dsum{u : \flat X} (\flat f (u) = y^{\flat}) &\mbox{\cite[Lemma.~4.4]{Shulman:Real.Cohesion}} \\
    &\equiv \fib_{\flat f}(y^{\flat}).
  \end{align*} 
We will need to understand what this equivalence does on elements $(x,
p)^{\flat}$ for $(x, p) :: \fib_f(y)$. The first equivalence in the composite
sends $(x, p)^{\flat}$ to $(x^{\flat}, p^{\flat})$, and no other equivalence
affects the first component, so the first component of the result will be
$x^{\flat}$. The second equivalence will send $p^{\flat}$ to $\ap_{\flat}\,
(-)^{\flat}\, p$, where $\ap_{\flat}$ is the crisp application function. The
next equivalence is given by reflexivity, since $\flat f(x^{\flat}) \equiv
f(x)^{\flat}$. In total, then, this equivalence acts as
\[
(x, p)^{\flat} \mapsto (x^{\flat}, \ap_{\flat} (-)^{\flat} p).
\]

 Now, to show the triangle commutes, it will suffice to show that it commutes
 for $(x, p)^{\flat}$ where $(x, p) :: \fib_f(y)$. This is to say, we need to
 show that sending $(x, p)^{\flat}$ through the above equivalence and then into
 $\fib_f(y)$ yields $(x, p)$. The map $\delta$ from $\fib_{\flat f}(y^{\flat})$ to
 $\fib_f(y)$ sends $(u, q)$ to $(u_{\flat}, \square_{\flat}(u) \bullet \ap\,
 (-)_{\flat}\, q)$, where $\square_{\flat}(u) : f(u_{\flat}) = \flat f(u)_{\flat}$
 is the naturality square. So, the round trip $\flat \fib_f(y) \to \fib_{\flat
   f}(y^{\flat}) \to \fib_f(y)$ acts as
 \[
(x, p)^{\flat} \mapsto (x^{\flat},\, \ap_{\flat}\, (-)^{\flat}\, p) \mapsto
(x^{\flat}{}_{\flat},\, \square_{\flat}(x^{\flat}) \bullet \ap\,
 (-)_{\flat}\, (\ap_{\flat}\, (-)^{\flat}\, p) ).
 \]
Now, $x^{\flat}{}_{\flat} \equiv x$, so it remains to show that $\square_{\flat}(x^{\flat}) \bullet \ap\,
 (-)_{\flat}\, (\ap_{\flat}\, (-)^{\flat}\, p)  = p$. However, the naturality
 square is defined by
 $\square_{\flat}(x^{\flat}) \equiv \refl_{f(x)} : f(x^{\flat}{}_{\flat}) = \flat
 f(x^{\flat})_{\flat}$, so it only remains to show that the two applications
 cancel. This can easily be shown by a crisp path induction.
\end{proof}

\begin{lem}\label{lem:flat_etale}
Let $f :: X \to Y$ be a crisp map between crisp types. The following are
equivalent:
\begin{enumerate}
  \item For every crisp $y :: Y$, $\fib_{f}(y)$ is discrete. 
  \item The naturality square
    \[
  \begin{tikzcd}
   \flat X \ar[r, "(-)_{\flat}"] \ar[d, "\flat f"'] &
   X \ar[d, "\pi"] \\
   \flat Y \ar[r, "(-)_{\flat}"'] & Y
  \end{tikzcd}
    \]
    is a pullback.
\end{enumerate}
\end{lem}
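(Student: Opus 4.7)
The plan is to combine the fiberwise criterion for a square being a pullback with the preservation statement in Lemma~\ref{lem:flat_fiber_sequenes} and then trade crisp discreteness for discreteness via Axiom~\ref{ax:unity_of_opposites}.

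First I would rewrite condition (2) in fiberwise form: the naturality square is a pullback if and only if, for every $v : \flat Y$, the canonical map $\fib_{\flat f}(v) \to \fib_{f}(v_{\flat})$ induced on fibers is an equivalence. Since the proposition ``this map is an equivalence'' is a family of propositions on $\flat Y$, $\flat$-induction applies: it suffices to verify it at $v \equiv y^{\flat}$ for crisp $y :: Y$, reducing to the map $\fib_{\flat f}(y^{\flat}) \to \fib_{f}(y)$.

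Next I would invoke Lemma~\ref{lem:flat_fiber_sequenes}, which gives an equivalence $\flat \fib_f(y) \simeq \fib_{\flat f}(y^{\flat})$ together with the commutative triangle identifying the composite with $(-)_{\flat} : \flat \fib_f(y) \to \fib_f(y)$. Under this identification, the induced fiber map of the naturality square is precisely the counit $(-)_{\flat}$ of the fiber, so the pullback condition becomes: for every crisp $y :: Y$, the counit $(-)_{\flat} : \flat \fib_f(y) \to \fib_f(y)$ is an equivalence, i.e., $\fib_f(y)$ is crisply discrete. Finally, Axiom~\ref{ax:unity_of_opposites} (Unity of Opposites) converts crisp discreteness of the crisp type $\fib_f(y)$ into discreteness in the sense of $\shape$-modality, yielding condition (1).

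The main obstacle, and the only spot requiring care, is verifying that $\flat$-induction on $v : \flat Y$ is permissible in reducing the pullback condition to crisp base points; this hinges on the predicate being a genuine family on $\flat Y$ (not on $Y$), which is why both the left map of the square and the base of $\fib_{\flat f}$ are phrased in terms of $\flat Y$ rather than $Y$. Everything else is assembling equivalences and unpacking the triangle supplied by Lemma~\ref{lem:flat_fiber_sequenes}.
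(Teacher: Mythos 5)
Your proof is essentially the same as the paper's: both reduce the pullback condition to the fiberwise map $\fib_{\flat f}(v) \to \fib_f(v_{\flat})$ being an equivalence, use $\flat$-induction to pass to crisp basepoints $y^{\flat}$, and then invoke the commuting triangle from Lemma~\ref{lem:flat_fiber_sequenes} to identify this map with the counit $(-)_{\flat} : \flat\fib_f(y) \to \fib_f(y)$. The only cosmetic difference is that you explicitly appeal to the Unity of Opposites axiom to pass from crisp discreteness to $\shape$-discreteness, whereas the paper treats these as interchangeable at that point.
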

\begin{proof}
  We note that the naturality square being a pullback is equivalent to the
  induced map
  \[
\fib_{\flat f}(u) \to \fib_f(u_{\flat})
\]
begin an equivalence for all $u : \flat Y$. By the universal property of $\flat
Y$, we may assume that $u$ is of the form $y^{\flat}$ for a crisp $y :: Y$. By
\cref{lem:flat_fiber_sequenes}, we have that
\[
\begin{tikzcd}[row sep = small]
\flat \fib_f(y) \ar[dr, "(-)_{\flat}"] \ar[dd, equals] & \\
& \fib_f(y) \\
\fib_{\flat f}(y^{\flat}) \ar[ur]
\end{tikzcd}
\]
commutes. Therefore, the naturality square is a pullback if and only if for all
crisp $y :: Y$, we have that $(-)_{\flat} : \flat \fib_f(y) \to \fib_f(y)$ is an
equivalence; but this is precisely what it means for $\fib_f(y)$ to be discrete.
\end{proof}

\begin{lem}\label{lem:flat.id.types}
 Let $X$ be a crisp type, and let $a,\, b :: X$ be crisp elements. Then there is
 an equivalence $e : (a^{\flat} = b^{\flat}) \simeq \flat (a = b)$ together with
 a commutation of the following triangle:
\[\begin{tikzcd}
	{(a^{\flat} = b^{\flat})} \\
	& {(a = b)} \\
	{\flat (a = b)}
	\arrow[from=1-1, to=3-1, equals, "e"']
	\arrow["{\ap\, (-)_{\flat}}", from=1-1, to=2-2]
	\arrow["{(-)_{\flat}}"', from=3-1, to=2-2]
\end{tikzcd}\]
\end{lem}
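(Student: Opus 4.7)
The plan is to invoke Shulman's characterization of identity types in $\flat X$ for crisp $X$ (\cite[Theorem 6.1]{Shulman:Real.Cohesion}) to obtain the equivalence, and then to verify the triangle by combining $\flat$-induction with crisp path induction.

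Concretely, I would take $e$ to be the equivalence $(a^\flat = b^\flat) \simeq \flat(a = b)$ furnished by \cite[Theorem 6.1]{Shulman:Real.Cohesion} (applied with the identity function on $X$, i.e.\ the same instance already used in the derivation inside \cref{lem:flat_fiber_sequenes}). It is convenient to describe its inverse $e^{-1} : \flat(a = b) \to (a^\flat = b^\flat)$ directly: by $\flat$-induction the input may be assumed to be of the form $q^\flat$ for a crisp $q :: a = b$, and by crisp path induction on $q$ we further reduce to the case $q \equiv \refl_a$, where we set $e^{-1}(\refl_a^{\,\flat}) :\equiv \refl_{a^\flat}$.

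To verify commutation of the triangle, I would show the equivalent statement obtained after precomposing with $e^{-1}$, namely that $(-)_\flat = \ap\,(-)_\flat \circ e^{-1}$ as maps $\flat(a = b) \to (a = b)$. By $\flat$-induction the input reduces to $q^\flat$ for a crisp $q :: a = b$; the left-hand side then evaluates definitionally to $q$ by the computation rule of $\flat$. For the right-hand side, a parallel crisp path induction on $q$ reduces to the case $q \equiv \refl_a$, where by the defining clause of $e^{-1}$ we have $e^{-1}(\refl_a^{\,\flat}) \equiv \refl_{a^\flat}$, and hence $\ap\,(-)_\flat\,\refl_{a^\flat} \equiv \refl_a$, matching the left-hand side.

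The main obstacle is purely bookkeeping: one must line up the crisp and $\flat$-inductions so that their base cases coincide definitionally. Once these reductions are carried out, the required identifications follow immediately from the computation rule $(q^\flat)_\flat \equiv q$ for the $\flat$ counit and the standard action of $\ap$ on $\refl$.
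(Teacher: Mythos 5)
Your proposal follows essentially the same route as the paper's proof: obtain $e$ from \cite[Theorem~6.1]{Shulman:Real.Cohesion}, and verify the triangle by precomposing with $e^{-1}$ and reducing via $\flat$-induction and crisp path induction to the $\refl$ case, where everything holds definitionally (the paper packages the pointwise check with function extensionality, which you leave implicit in the phrase "as maps"). The only added content in your version is the explicit description of $e^{-1}$ on $q^\flat$, which matches the construction Shulman gives and is consistent with the computation recorded just above in \cref{lem:flat_fiber_sequenes}.
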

\begin{proof}
 For the construction of the equivalence $e$ we refer to
 \cite[Theorem.~6.1]{Shulman:Real.Cohesion}. For the commutativity, we use function
 extensionality to work from $u : \flat (a = b)$ seeking $e\inv(u) = u_{\flat}$
 and proceed by
 $\flat$-induction and then identity induction in which case both sides reduce
 to $\refl$.
\end{proof}

\begin{lem}\label{lem:flat.preserves.groups}
Let $G$ be a crisp higher group; that is, suppose that $\B G$ is a crisp,
$0$-connected type and its base point $\pt :: \B G$ is also crisp. Then $\flat
G$ is also a higher group and we may take
$$\B \flat G \equiv \flat \B G$$
pointed at $\pt^{\flat}$. Furthermore, the counit $(-)_{\flat} : \flat G \to G$ is a
homomorphism delooped by the counit $(-)_{\flat} : \flat \B G \to \B G$.
\end{lem}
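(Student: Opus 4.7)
The plan is to verify three claims: that $\flat \B G$ is a $0$-connected pointed type, that $\Omega(\flat \B G, \pt^\flat) \simeq \flat G$ as pointed types, and that looping the pointed map $(-)_\flat : \flat \B G \to \B G$ recovers the counit $(-)_\flat : \flat G \to G$.

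First I would show $\flat \B G$ is $0$-connected with basepoint $\pt^\flat$. Pointedness is immediate from the assumption that $\pt :: \B G$ is crisp. For $0$-connectedness, note that the crisp inclusion $\One \to \B G$ picking out $\pt$ is $(-1)$-connected. Since $\flat$ preserves fibers of crisp maps by Lemma \ref{lem:flat_fiber_sequenes}, and since $\flat$ commutes with propositional truncation on crisp types, applying $\flat$ produces the $(-1)$-connected map $\One \simeq \flat \One \to \flat \B G$. Hence $\flat \B G$ is $0$-connected.

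Next I would identify the loop space. Applying Lemma \ref{lem:flat.id.types} with $a = b = \pt$ yields an equivalence
$$\Omega(\flat \B G, \pt^\flat) \equiv (\pt^\flat = \pt^\flat) \simeq \flat(\pt = \pt) \equiv \flat G,$$
and inspection of the construction of $e$ in that lemma shows the basepoint $\refl_{\pt^\flat}$ is sent to $(\refl_\pt)^\flat$, the identity element of $\flat G$. So $\flat \B G$ is a $0$-connected delooping of $\flat G$, certifying $\flat G$ as a higher group with $\B \flat G \equiv \flat \B G$.

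Finally, the counit $(-)_\flat : \flat \B G \to \B G$ is pointed by the definitional equality $(\pt^\flat)_\flat \equiv \pt$, so it is the delooping of some homomorphism $\flat G \to G$. Looping it gives the map $\ap\, (-)_\flat : (\pt^\flat = \pt^\flat) \to (\pt = \pt)$, and the commuting triangle of Lemma \ref{lem:flat.id.types} says precisely that, under the identification above, this map coincides with the counit $(-)_\flat : \flat G \to G$. The main obstacle is the $0$-connectedness step: it rests on $\flat$ commuting with propositional truncation for crisp inputs, which I expect to be extractable from Shulman's treatment of truncations under $\flat$ but requires some unpacking; given that, the remaining identifications are straightforward consequences of the already-proved lemmas.
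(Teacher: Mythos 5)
Your proposal is correct and follows essentially the same route as the paper: use \cref{lem:flat.id.types} to identify $\Omega(\flat \B G, \pt^\flat)$ with $\flat G$, invoke the commuting triangle from that lemma to see $(-)_\flat : \flat \B G \to \B G$ deloops the counit, and use the compatibility of $\flat$ with truncation for connectedness. The only difference is in the connectedness step: you deduce $0$-connectedness of $\flat \B G$ indirectly, by arguing that the basepoint inclusion $\One \to \B G$ is $(-1)$-connected and that $\flat$ preserves both fibers and $(-1)$-truncations; the paper goes more directly via Corollary 6.7 of Shulman, which states $\trunc{\flat \B G}_0 = \flat \trunc{\B G}_0$ for crisp types, and is then immediately done since the right side is $\flat \ast = \ast$. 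Both rest on the same Shulman result (commutation of $\flat$ with $n$-truncation on crisp types), so your hedge about whether that fact is available is resolved affirmatively, but the paper's one-line application at $n=0$ avoids the extra appeal to \cref{lem:flat_fiber_sequenes} and the fiberwise reduction.
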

\begin{proof}
We need to show that $\flat \B G$ deloops $\flat G$ via an equivalence $e :
\Omega \flat \B G = \flat G$, that it is $0$-connected,
and that looping the counit $(-)_{\flat} : \flat \B G \to \B G$ corresponds to
the counit $(-)_{\flat} : \flat G \to G$ along the equivalence $e$.

For the equivalence $e : \Omega \flat \B G = \flat G$, we may take the
equivalence $(\pt^{\flat} = \pt^{\flat}) = \flat (\pt = \pt)$ of
\cref{lem:flat.id.types}. The commutation of the triangle
\[\begin{tikzcd}
	{(\pt^{\flat} = \pt^{\flat})} \\
	& {(\pt = \pt)} \\
	{\flat (\pt = \pt)}
	\arrow[from=1-1, to=3-1, equals, "e"']
	\arrow["{\ap\, (-)_{\flat}}", from=1-1, to=2-2]
	\arrow["{(-)_{\flat}}"', from=3-1, to=2-2]
\end{tikzcd}\]
shows that $(-)_{\flat} : \flat \B G \to \B G$ deloops $(-)_{\flat}
: \flat G \to G$. 

To show that $\flat \B G$ is connected,
we rely on \cite[Corollary.~6.7]{Shulman:Real.Cohesion} which says that $\trunc{\flat
\B G}_0 = \flat \trunc{\B G}_0$, which is $\ast$ by the hypothesis that $\B G$
is $0$-connected.
\end{proof}

We end with a useful lemma: $\flat$ preserves long exact sequences of groups.
\begin{lem}\label{lem:flat.preseres.exact.sequences}
The comodality $\flat$ preserves crisp short and long exact sequences of groups.
\end{lem}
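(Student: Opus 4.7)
The plan is to reduce this to \cref{lem:flat_fiber_sequenes}, together with the fact that $\flat$ commutes with deloopings (\cref{lem:flat.preserves.groups}).

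First I would recall that a crisp short exact sequence of higher groups $1 \to K \to G \to H \to 1$ is precisely a crisp fiber sequence of pointed, connected deloopings $\B K \to \B G \to \B H$; equivalently, $K$ is the fiber of a crisp homomorphism $G \to H$ whose delooping is $0$-connected (so that $\B H$ is indeed the cofiber as well, encoding surjectivity of $G \to H$). Applying $\flat$ to this fiber sequence, \cref{lem:flat_fiber_sequenes} gives a fiber sequence
\[
  \flat \B K \to \flat \B G \to \flat \B H.
\]
Now \cref{lem:flat.preserves.groups} identifies each $\flat \B (-)$ with $\B \flat (-)$ via the delooping equivalence and shows that the map $\flat \B G \to \flat \B H$ is a homomorphism of higher groups that, after looping, yields $\flat G \to \flat H$. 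In particular $\flat \B H$ is still $0$-connected, so we obtain a crisp short exact sequence $1 \to \flat K \to \flat G \to \flat H \to 1$ as required.

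For the long exact case, the standard presentation of a long exact sequence of (higher) groups associated to a crisp fiber sequence $F \to E \to B$ is the Puppe-style sequence
\[
  \cdots \to \Omega^{n+1} B \to \Omega^n F \to \Omega^n E \to \Omega^n B \to \cdots,
\]
each three-term segment of which is itself a fiber sequence. Applying \cref{lem:flat_fiber_sequenes} segment-by-segment preserves each of these fiber sequences, and \cref{lem:flat.preserves.groups} (together with the fact that $\flat$ commutes with $\Omega$, which follows from \cref{lem:flat.id.types}) reinterprets the resulting chain as the analogous sequence for $\flat F \to \flat E \to \flat B$. Naturality of the $\flat$-counit ensures that the connecting homomorphisms match up.

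The only genuine subtlety is verifying that the fiber-sequence formulation of short exact sequences of higher groups is what is meant, and that one has enough crispness to feed into \cref{lem:flat_fiber_sequenes} at each stage; but since we began with crisp data and $\flat$ of a crisp type is itself crisp (being definable from crisp data), every successive fiber, loop space, and delooping in the Puppe sequence is automatically crisp, so the induction goes through without obstruction.
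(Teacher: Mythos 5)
Your argument for the short exact case is correct and matches the paper's: both of you identify a short exact sequence with a fiber sequence of deloopings, apply \cref{lem:flat_fiber_sequenes}, and use \cref{lem:flat.preserves.groups} to reinterpret $\flat$ of the deloopings as deloopings of $\flat$.

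The long exact case has a genuine gap, rooted in a misreading of what ``long exact sequence of groups'' means here. You interpret it as the Puppe sequence $\cdots \to \Omega^{n+1}B \to \Omega^n F \to \Omega^n E \to \Omega^n B \to \cdots$ associated to a single fiber sequence $F \to E \to B$, where each three-term segment is itself a fiber sequence. But the lemma is applied in this paper to exact \emph{chain complexes} of abelian groups --- most importantly the resolution $0 \to \flat\Rb \to \Rb \xto{d} \Lambda^1 \xto{d} \Lambda^2 \to \cdots$ of Assumption~\ref{assumption:form.classifiers} --- which are not Puppe sequences of any single fibration, and whose consecutive triples $A_{n-1} \xto{d} A_n \xto{d} A_{n+1}$ are only exact in the middle, not fiber sequences. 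So your argument simply does not reach the relevant case.

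The paper's argument handles this by factoring a long exact complex into overlapping short exact sequences: setting $K_n :\equiv \ker(d : A_n \to A_{n+1})$, long exactness of $(A_\bullet, d)$ is equivalent to each $0 \to K_n \to A_n \to K_{n+1} \to 0$ being short exact. Applying the already-proved short exact case to each of these, and then observing that $\flat$ preserves kernels (so $\flat K_n = \ker(\flat d : \flat A_n \to \flat A_{n+1})$ by \cref{lem:flat_fiber_sequenes}), one reassembles these into long exactness of $(\flat A_\bullet, \flat d)$. This is the reduction you should make for the long exact case; once you see it, it reuses exactly the ingredients you had already marshalled, just assembled around the correct notion of long exactness.
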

\begin{proof}
  A sequence
  $$0 \to K \to G \to H \to 0$$
  of groups is short exact if and only if its delooping
  $$\B K \pto \B G \pto \B H$$
  is a fiber sequence. But $\flat$ preserves crisp fiber sequences by
  \cref{lem:flat_fiber_sequenes} and by \cref{lem:flat.higher.groups} we have
  that the fiber sequence
$$\flat \B K \pto \flat \B G \pto \flat \B H$$
deloops the sequence 
$\flat K \to \flat G \to \flat H$, 
so this sequence is also short exact.

Now, a complex of groups
$$\cdots \to A_{n-1} \xto{d} A_n \xto{d} A_{n+1} \to \cdots$$
satisfying $d \circ d = 0$ is long exact if and only if the sequences 
$$0 \to K_n \to A_n \to K_{n + 1} \to 0$$
are short exact, where $K_n :\equiv \ker (A_n \to A_{n+1})$. Now, we have a
complex 
$$\cdots \to \flat A_{n-1} \xto{\flat d} \flat A_{n} \xto{\flat d} A_{n + 1} \to
\cdots$$
by the functoriality of $\flat$. 
Since $\flat$ preserves short exact sequences, the sequences 
$$0 \to \flat K_n \to \flat A_n \to \flat A_{n + 1} \to 0$$
are short exact. Now, since $\flat$ preserves fibers we have that 
$$\flat K_n = \ker (\flat A_n \to \flat A_{n + 1}),$$
so that the $\flat$-ed complex is long exact.
\end{proof}

\subsection{The Universal $\infty$-Cover of a Higher Group}\label{sec:universal.cover}

An $\infty$-cover of a type $X$ is a generalization of the notion of cover from
a theory concerning $1$-types (the fundamental groupoid of $X$, with the
universal cover being simply connected) to arbitrary types (the homotopy type of
$X$, with the universal $\infty$-cover being contractible).

Recall that, classically, a covering map $\pi : \tilde{X} \to X$ satisfies a
\emph{unique path lifting property}; that is, every square of the following form
admits a unique filler:
\[
  \begin{tikzcd}
    \ast \ar[r, "\tilde{x}"] \ar[d, "0"'] & \tilde{X} \ar[d, "\pi"] \\
    \Rb \ar[r, "\gamma"'] \ar[ur, dashed, "\exists !"] & X 
  \end{tikzcd}
\]
This property can be extended to a unique lifting property against any map which
induces an equivalence fundamental groupoids. That is, whenever $f : A \to B$
induces an equivalence $\shape_1 f : \shape_1 A \to \shape_1 B$, every 
square of the following form admits a unique filler:
\[
  \begin{tikzcd}
    A \ar[r, "\tilde{\gamma}"] \ar[d, "f"'] & \tilde{X} \ar[d, "\pi"] \\
    B \ar[r, "\gamma"'] \ar[ur, dashed, "\exists !"] & X 
  \end{tikzcd}
\]

For any modality $!$, there is an orthogonal factorization system where
$!$-equivalences (those maps $f$ such that $!f$ is an equivalence) lift uniquely
against \emph{$!$-{\'e}tale maps} (\cite{Rijke:Thesis,Cherubini-Rijke:Modal.Descent}).
\begin{defn}
  A map $f : A \to B$ is $!$-{\'e}tale for a modality $!$ if the naturality
  square
  \[
    \begin{tikzcd}
A \ar[r, "(-)^{!}"] \ar[d, "f"'] & !A \ar[d, "!f"] \\
B \ar[r, "(-)^{!}"'] & !B
\end{tikzcd}
  \]
  is a pullback.
\end{defn}

We may single out the covering maps as the $\shape_1$-{\'e}tale maps whose
fibers are sets. For more on this point of view, see the last section of
\cite{Jaz:Good.Fibrations}. Here, however, we will be more concerned with $\shape$-{\'e}tale maps, which we
will call $\infty$-covers. This notion was called a ``modal covering'' in
\cite{Wellen:Covering}, and was referred to as an $\infty$-cover in the
setting of $\infty$-categories by Schreiber in \cite{Schreiber:Differential.Cohomology}.

\begin{defn}
  A map $\pi : E \to B$ is an \emph{$\infty$-cover} if the naturality square
  \[
    \begin{tikzcd}
      E \ar[d, "\pi"'] \ar[r, "(-)^{\shape}"] & \shape E \ar[d,"\shape \pi"] \\
      B \ar[r, "(-)^{\shape}"] & \shape B
    \end{tikzcd}
  \]
  is a pullback. That is, an $\infty$-cover is precisely a $\shape$-{\'e}tale
  map.

  A map $\pi : E \to B$ is an $n$-cover if it is $\shape_{n+1}$-{\'e}tale and
  its fibers are $n$-types. We call a $1$-cover just a cover, or a covering map.
\end{defn}

Theorem 6.1 of \cite{Jaz:Good.Fibrations} gives a useful way for proving that a
map is an $\infty$-cover. 
\begin{prop}[Theorem 6.1 of \cite{Jaz:Good.Fibrations}]\label{thm:Good.Fibrations.Etale}
Let $\pi : E \to B$ and suppose that there is a crisp, discrete type $F$ so that
for all $b : B$, $\trunc{\fib_{\pi}(b) = F}$. Then $\pi$ is an $\infty$-cover.
\end{prop}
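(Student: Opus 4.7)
The plan is to verify that the naturality square for $\pi$ is a pullback, equivalently that for each $b : B$ the canonical comparison map $\fib_\pi(b) \to \fib_{\shape \pi}(b^\shape)$ is an equivalence. The first observation is that every fiber of $\pi$ is itself discrete: the fiber $\fib_\pi(b)$ is merely equivalent to the discrete type $F$, and since being $\shape$-modal is a proposition, discreteness transports through a merely-given equivalence from a discrete type (one unpacks the mere equivalence inside a proof of the propositional statement ``$\fib_\pi(b)$ is $\shape$-modal''). Consequently the comparison factors through the $\shape$-unit $\fib_\pi(b) \to \shape \fib_\pi(b) \to \fib_{\shape \pi}(b^\shape)$, in which the first map is already an equivalence, so the task reduces to showing that $\shape$ preserves the fiber of $\pi$ over each point.

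My strategy is to present $\pi$ as the pullback of the universal $F$-bundle $\sum_{(X,p) : \BAut(F)} X \to \BAut(F)$ along the classifying map $\chi : B \to \BAut(F)$ that sends $b$ to the fiber $\fib_\pi(b)$ equipped with its mere identification with $F$. Since $\shape$-{\'e}tale maps form the right class of an orthogonal factorization system (as in \cite{Rijke-Shulman-Spitters:Modalities}) they are closed under pullback along arbitrary maps, so it suffices to show that the universal $F$-bundle is itself $\shape$-{\'e}tale. But any map between two discrete types is automatically $\shape$-{\'e}tale, because then both horizontal arrows in the naturality square are equivalences and the square commutes trivially. So the problem reduces to verifying that both $\BAut(F)$ and the total space $\sum_{(X,p) : \BAut(F)} X$ are discrete.

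The main obstacle is precisely this discreteness claim, which is a genuinely cohesive fact relying on $F$ being crisp as well as discrete. The required auxiliary lemma is that the delooping of a crisp discrete higher group is again discrete --- a statement which would fail without crispness. Granting it, $\Aut(F)$ is a discrete group (as a subtype of the discrete function type $F \to F$), hence $\BAut(F)$ is discrete, and the total space is discrete because it is the homotopy quotient $F \sslash \Aut(F)$ of a discrete type by the action of a discrete group (a colimit which $\shape$, being a left adjoint, preserves). I would prove the delooping-of-discrete lemma by crisp $\flat$-induction together with the identification $\shape_n X = \trunc{\shape X}_n$ for crisp $X$ from Proposition 4.5 of \cite{Jaz:Good.Fibrations}, reducing to the statement that $\shape \BAut(F)$ is again $0$-connected with the expected discrete loop space; this is where the real cohesive work lives.
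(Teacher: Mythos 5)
The paper cites this result as Theorem~6.1 of the author's earlier ``Good Fibrations'' paper and does not reproduce its proof, so there is no in-paper argument to compare against. Evaluating your proposal on its own terms: the high-level strategy --- classify $\pi$ by $\chi : B \to \BAut(F)$ so that $\pi$ becomes a pullback of the universal $F$-bundle, note that $\shape$-\'etale maps form the right class of an orthogonal factorization system and are therefore pullback-stable, and reduce to showing both $\BAut(F)$ and its total space are discrete --- is sound, and is plausibly close to the argument in the cited source. But two of your supporting claims need attention.

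For the total space, the appeal to $\shape$ preserving colimits is both unnecessary and delicate in type theory. The direct argument is closure of $\shape$-modal types under dependent sums: once $\BAut(F)$ is discrete, each fiber $X$ over it is merely equal to the discrete $F$ and hence discrete (discreteness is a proposition, so it transports along a mere equality), and $\sum_{(X,p) : \BAut(F)} X$ is a $\Sigma$-type with modal base and modal fibers, hence modal.

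The more serious gap is in the key lemma that $\BAut(F)$ is discrete. You reduce this to ``$\shape\BAut(F)$ is $0$-connected with the expected discrete loop space,'' but showing $\Omega\shape\BAut(F) \simeq \Aut(F)$ is precisely the claim that $\shape$ commutes with this particular loop space --- a $\shape$-fibration statement of the very kind Theorem~6.1 is meant to deliver, so this route risks circularity. The non-circular path goes through $\flat$, not $\shape$, mimicking \cref{lem:flat.preserves.groups}: $\flat$ is a left-exact comonad, so $\flat\BAut(F)$ is $0$-connected with loop space $\flat\Aut(F)$; the group $\Aut(F) = (F \simeq F)$ is discrete because equivalence types between modal types are modal; the counit $\flat\Aut(F) \to \Aut(F)$ is therefore an equivalence; and a Whitehead-style argument between pointed $0$-connected types then shows $(-)_{\flat} : \flat\BAut(F) \to \BAut(F)$ is an equivalence, so $\BAut(F)$ is discrete by the Unity of Opposites axiom. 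This is where the crispness of $F$ is genuinely used, and it is the side of the $\flat \dashv \shape$ opposition that does not presuppose any $\shape$-commutation facts.
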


\begin{ex}
  As an example of an $\infty$-cover, consider the exponential map $\Rb \to
  \Sb^1$ from the real line to the circle. The fibers of this map are all merely
  $\Zb$, so by Theorem \ref{thm:Good.Fibrations.Etale}, this map is an
  $\infty$-cover. Since $\Rb$ is contractible, it is in fact the
  \emph{universal} $\infty$-cover of the circle.
\end{ex}

Just as the universal cover of a space $X$ is any simply connected cover $\tilde{X}$, the
universal $\infty$-cover $\inftycover{X}$ of a type $X$ is any \emph{contractible} cover ---
contractible in the sense of being $\shape$-connected, meaning $\shape \inftycover{X} =
\ast$. Since units of a modality are modally connected, we may always construct a universal
$\infty$-cover by taking the fiber of the $\shape$-unit $(-)^{\shape} : X \to
\shape X$.
\begin{defn}\label{defn:universal.infty.cover}
  The \emph{universal} $\infty$-cover of a pointed type $X$ is defined to be the fiber
  of the $\shape$-unit:
  $$\inftycover{X} :\equiv \fib((-)^{\shape} : X \to \shape X).$$
  Since the units of modalities are modally connected, $\inftycover{X}$ is
  homotopically contractible:
  $$\shape \inftycover{X} = \ast.$$
\end{defn}

Let's take a bit to get an image of the universal $\infty$-cover of a type. The
universal $\infty$-cover of a type only differs from its universal cover in the
identifications between its points; in other words, it is a ``stacky'' version
of the universal cover.
\begin{prop}
Let $X$ be a crisp type. Then the map $\inftycover{X} \to \tilde{X}$ from the
universal $\infty$-cover of $X$ to its universal cover induced by the square
\[
  \begin{tikzcd}
  \inftycover{X} \ar[r, "\pi"] \ar[d, dashed] & X \ar[d, equals] \ar[r,
  "(-)^{\shape}"] & \shape X \ar[d] \\
  \tilde{X} \ar[r, "\pi"'] & X \ar[r,"(-)^{\shape_1}"'] & \shape_1 X 
  \end{tikzcd}
\]
is $\trunc{-}_0$-connected and $\shape$-modal. In particular, if $X$ is a set then
$\trunc{\inftycover{X}}_0 = \tilde{X}$. Furthermore, its fibers may be
identified with the loop space $\Omega ( (\shape X)\langle 1 \rangle )$ of the first stage of the Whitehead tower of the
shape of $X$. 
\end{prop}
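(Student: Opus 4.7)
The plan is to identify the fiber of the map $\inftycover{X} \to \tilde{X}$ explicitly as a loop space in the Whitehead tower of $\shape X$, and then deduce the remaining claims from standard closure properties. Since $X$ is crisp, we have the identification $\shape_1 X = \trunc{\shape X}_1$ (by Proposition 4.5 of \cite{Jaz:Good.Fibrations}), so the map $\shape X \to \shape_1 X$ appearing in the induced map of pullback squares is just the $1$-truncation unit on $\shape X$. Its fiber is by construction $(\shape X)\langle 1 \rangle$, the first stage of the Whitehead tower, which is $1$-connected.

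To identify the fiber of $\inftycover{X} \to \tilde{X}$, I would pass to the composite map $\inftycover{X} \to \tilde{X} \to X$ and apply the pasting lemma for pullbacks. Over a point $x : X$, the fiber of $\inftycover{X} \to X$ is $\Omega_{x^{\shape}}\shape X$, the fiber of $\tilde{X} \to X$ is $\Omega_{x^{\shape_1}}\shape_1 X$, and the induced map between them is $\Omega$ of $\shape X \to \shape_1 X$. Looping the fiber sequence $(\shape X)\langle 1 \rangle \to \shape X \to \shape_1 X$ then identifies the fiber of $\inftycover{X} \to \tilde{X}$ with $\Omega((\shape X)\langle 1 \rangle)$, as claimed.

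From this identification the remaining properties follow by closure arguments. Since $(\shape X)\langle 1 \rangle$ is $1$-connected, its loop space is $0$-connected, whence $\inftycover{X} \to \tilde{X}$ is $\trunc{-}_0$-connected. Since $\shape X$ and $\shape_1 X$ are both discrete, $\shape$-modal types are closed under pullback, and loop spaces of modal types are modal, the fiber $\Omega((\shape X)\langle 1 \rangle)$ is discrete, so the map is $\shape$-modal. Finally, if $X$ is a set then so is $\tilde{X}$ (as identity types in the $1$-type $\shape_1 X$ are sets, making $\tilde{X}$ a sigma of sets), and the $\trunc{-}_0$-connectedness of $\inftycover{X} \to \tilde{X}$ gives an equivalence $\trunc{\inftycover{X}}_0 \simeq \tilde{X}$ upon applying $\trunc{-}_0$.

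The main obstacle is not of a deep conceptual nature but rather a careful bookkeeping exercise: the pullback-pasting needed to identify the fiber of $\inftycover{X} \to \tilde{X}$ as the loop space $\Omega((\shape X)\langle 1 \rangle)$ must be set up precisely, in particular verifying that the induced map on fibers over $X$ really is the loop map of $\shape X \to \shape_1 X$. Once this identification is in hand, all of the remaining claims reduce to familiar facts about connectivity, truncation, and closure of modal types under limits.
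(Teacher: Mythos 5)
Your proof is correct and follows essentially the same route as the paper's. The key move in both is the same: use crispness and Proposition 4.5 of \cite{Jaz:Good.Fibrations} to identify $\shape_1 X$ with $\trunc{\shape X}_1$ so that the map $\shape X \to \shape_1 X$ is the $1$-truncation unit, then identify the fibers of $\inftycover{X} \to \tilde{X}$ as loop spaces of fibers of this $1$-connected map between discrete types, yielding $0$-connectedness and discreteness at once; the paper organizes the fiber identification via a $3\times 3$ diagram of fiber sequences, whereas you phrase it as the fiber sequence of the composite $\inftycover{X} \to \tilde{X} \to X$, but these are the same calculation.
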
  
\begin{proof}
We begin by noting that the unique factorization $f : \shape X \to \shape_1 X$ of the $\shape_1$ unit
$(-)^{\shape_1} : X \to \shape_1 X$ through the $\shape$ unit is a $\trunc{-}_1$
unit. We note that $(-)^{\shape_1} : X \to \shape_1 X$ and $|(-)^{\shape}|_1 : X
\to \trunc{\shape X}_1$ have the same universal
property: any map from $X$ to a discrete $1$-type factors uniquely through them.
However, unless $X$ is crisp, we do not know that $\trunc{\shape X}_1$ is itself
discrete; in general, we can only conclude that there is a map $\trunc{\shape
  X}_1 \to \shape_1 X$. This is why we must assume that $X$ is crisp. By Proposition 4.5 of
\cite{Jaz:Good.Fibrations}, $\trunc{\shape X}_1$ is discrete and therefore the
map $\trunc{\shape X}_1 \to \shape_1 X$ is an equivalence. Since $f$ factors
uniquely through this map (since $\shape_1$ is a $1$-type), we see that $f$ is
equal to the $\trunc{-}_1$ unit of $\shape X$ and is therefore a $\trunc{-}_1$ unit. In particular, $f :
\shape X \to \shape_1 X$ is $1$-connected.

/ow we will show that the fibers of the induced map $\inftycover{X} \to
\tilde{X}$ are $0$-connected and discrete. Consider the following diagram:
\[
\begin{tikzcd}
\fib(p) \arrow[r] \arrow[d]               & \ast \arrow[r] \arrow[d]              & \fib((\pi p)^{\shape_1}) \arrow[d] \\
\inftycover{X} \arrow[d] \arrow[r, "\pi"] & X \arrow[d, equals] \arrow[r, "(-)^{\shape}"] & \shape X \arrow[d]                 \\
\tilde{X} \arrow[r, "\pi"']               & X \arrow[r, "(-)^{\shape_1}"']        & \shape_1 X                        
\end{tikzcd}
\]
All vertical sequences are fiber sequences, and the bottom two sequences are
fiber sequences; therefore, the top sequence is a fiber sequence, which tells us
that the fiber over any point $p : \tilde{X}$ is equivalent to $\Omega
\fib_{f}((\pi p)^{\shape_1})$. As we have shown that the fibers of $f$ are
$1$-connected, their loop spaces are $0$-connected. And as $f$ is a map between
discrete types, its fibers are discrete and so their loop spaces are discrete.
Finally, we note that the fiber of the $1$-truncation $|\cdot|_1 : \shape X \to
\trunc{\shape X}_1$ is the first stage $(\shape X)\langle 1 \rangle$ of the
Whitehead tower of $\shape X$.
\end{proof}

We are now ready to prove a simple sort of fracture theorem for any crisp, pointed type. This
square
will make up the left square of our modal fracture hexagon.
\begin{prop}\label{prop:left_fracture}
Let $X$ be a crisp, pointed type. Then the $\flat$ naturality square of the universal
$\infty$-cover $\pi : \inftycover{X} \to X$ is a pullback:
\[
  \begin{tikzcd}
   \flat \inftycover{X} \ar[r, "(-)_{\flat}"] \ar[d, "\flat \pi"'] &
   \inftycover{X} \ar[d, "\pi"] \\
   \flat X \ar[r, "(-)_{\flat}"'] & X
  \end{tikzcd}
\]
\end{prop}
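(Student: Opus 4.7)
The plan is to apply \cref{lem:flat_etale}, which reduces the pullback claim to the condition that for every crisp $y :: X$, the fiber $\fib_\pi(y)$ is discrete. The hypothesis that $X$ is a crisp, pointed type makes both $X$ and $\pt :: X$ crisp, so the construction $\inftycover{X} \equiv \fib((-)^{\shape} : X \to \shape X)$ produces a crisp map $\pi :: \inftycover{X} \to X$ to which the lemma applies.

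Next I would compute $\fib_\pi(y)$ explicitly. Unfolding the definitions,
\[
\fib_\pi(y) \equiv \sum_{(x,p) : \inftycover{X}} (x = y) \equiv \sum_{x : X}\sum_{p : (x)^{\shape} = \pt^{\shape}} (x = y),
\]
and contracting away the based-path pair $(x, x = y)$ gives an equivalence $\fib_\pi(y) \simeq \bigl((y)^{\shape} = \pt^{\shape}\bigr)$, which is just an identity type in $\shape X$.

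Finally, $\shape X$ is $\shape$-modal by definition of $\shape$, and $\shape$-modal types are closed under identity types --- a general property of any modality, see \cite{Rijke-Shulman-Spitters:Modalities}. Hence $(y)^{\shape} = \pt^{\shape}$ is discrete, and \cref{lem:flat_etale} delivers the pullback. The substantive content is packaged entirely inside that lemma; the only thing to watch is the bookkeeping of crispness, and I expect no genuine obstacle beyond confirming that $\pi$ is crisp when $X$ and its basepoint are, which follows because $\inftycover{X}$ and $\pi$ are assembled from crisp data by operations ($\Sigma$-types, $\shape$-unit, identities) that preserve crispness.
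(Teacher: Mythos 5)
Your proposal is correct and follows the same route as the paper: reduce via \cref{lem:flat_etale} to showing that the fibers over crisp points are discrete, then identify the fiber of $\pi$ with an identity type in $\shape X$ and use the fact that identity types of $\shape$-modal types are $\shape$-modal. If anything, your explicit contraction giving $\fib_\pi(y) \simeq (y^{\shape} = \pt^{\shape})$ is a touch more precise than the paper, which informally writes the fiber as the loop space $\Omega(\shape X, x^{\shape})$, but the closing argument is identical.
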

\begin{proof}
  By \cref{lem:flat_etale}, it will suffice to show that over a crisp point $x
  :: X$, $\fib_{\pi}(x)$ is discrete. But since $\pi$ is, by definition, the
  fiber of $(-)^{\shape}$, we have that
  \[
\fib_{\pi}(x) = \Omega(\shape X, x^{\shape}).
\]
Since $\shape X$ is discrete by assumption, so is $\Omega(\shape X, x^{\shape})$.
\end{proof}

Although $\shape$ is \emph{not} a left exact modality --- it does not preserve
all pullbacks --- it does preserve pullbacks and fibers of
\emph{$\shape$-fibrations}. The theory of modal fibrations was developed in
\cite{Jaz:Good.Fibrations}. Included amongst the $\shape$-fibrations are the
$\shape$-{\'e}tale maps, and so $\shape$ preserves fiber sequences of
$\shape$-{\'e}tale maps.
\begin{lem}
  Let $f : X \to Y$ be an $\infty$-cover. Then for any $y : Y$, the sequence
  \[
\shape \fib_{f}(y) \to \shape X \xto{\shape f} \shape Y
\]
is a fiber sequence.
\end{lem}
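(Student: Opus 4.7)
The plan is to leverage the defining pullback of a $\shape$-étale map to identify the fiber $\fib_f(y)$ with the fiber of $\shape f$ over $y^{\shape}$, and then observe that this latter fiber is already $\shape$-modal, so taking $\shape$ doesn't change anything.

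First, since $f : X \to Y$ is an $\infty$-cover, by definition the naturality square
\[
\begin{tikzcd}
X \ar[r, "(-)^{\shape}"] \ar[d, "f"'] & \shape X \ar[d, "\shape f"] \\
Y \ar[r, "(-)^{\shape}"'] & \shape Y
\end{tikzcd}
\]
is a pullback. The pasting lemma applied to the fiber of both vertical maps over $y : Y$ yields an equivalence $\fib_f(y) \simeq \fib_{\shape f}(y^{\shape})$ given by applying the $\shape$-unit in the total space.

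Next, I would argue that $\fib_{\shape f}(y^{\shape})$ is $\shape$-modal. Both $\shape X$ and $\shape Y$ are $\shape$-modal by construction, the point $y^{\shape} : \shape Y$ picks out a $\shape$-modal subsingleton (as $\shape$-modal types are closed under identity types), and $\shape$-modal types are closed under pullbacks since they form a reflective subuniverse. Hence $\fib_{\shape f}(y^{\shape})$ is $\shape$-modal, and therefore so is $\fib_f(y)$ via the equivalence above. This means the $\shape$-unit $\fib_f(y) \to \shape \fib_f(y)$ is itself an equivalence, giving $\shape \fib_f(y) \simeq \fib_{\shape f}(y^{\shape})$.

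Finally, by the very definition of fiber, the sequence $\fib_{\shape f}(y^{\shape}) \to \shape X \xto{\shape f} \shape Y$ is a fiber sequence. Transporting along the equivalence just established, we conclude that $\shape \fib_f(y) \to \shape X \xto{\shape f} \shape Y$ is a fiber sequence as well. The only step requiring care is ensuring that the equivalence $\fib_f(y) \simeq \fib_{\shape f}(y^{\shape})$ is genuinely the one induced by the $\shape$-unit (so that the subsequent map into $\shape X$ is $\shape f$ composed with the identification, rather than some unrelated map), but this is immediate from how fibers are extracted from the pullback square.
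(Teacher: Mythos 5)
Your proof is correct, and it takes a genuinely different route from the paper's. The paper appeals to the machinery of $\shape$-fibrations from \cite{Jaz:Good.Fibrations}: a $\shape$-{\'e}tale map is $\shape$-modal, and since its modal and {\'e}tale factors agree, Theorem~1.2 there shows it is a $\shape$-fibration, whereupon the claim follows because $\shape$ preserves all fibers of $\shape$-fibrations. You instead work directly from the defining pullback square of an $\infty$-cover: the pasting lemma identifies $\fib_f(y)$ with $\fib_{\shape f}(y^{\shape})$ compatibly with the maps into $\shape X$, and the latter fiber is already $\shape$-modal because $\shape$ is a ($\Sigma$-closed) modality, so modal types are closed under identity types and dependent sums, hence under pullbacks. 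Your argument is more elementary and self-contained --- it needs only the basic closure properties of modal types --- whereas the paper's route invokes the heavier fibration theory, which buys uniformity (the same statement holds for all $\shape$-fibrations, not just the {\'e}tale ones) and is the machinery used throughout the rest of the paper anyway. The one point you rightly flag, that the equivalence $\fib_f(y) \simeq \fib_{\shape f}(y^{\shape})$ really is the one induced by the unit and hence that the resulting map into $\shape X$ agrees with $\shape$ applied to the inclusion $\fib_f(y) \hookrightarrow X$, does follow from the universal property of the pullback together with naturality of the unit, so there is no gap.
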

\begin{proof}
  Since a $\shape$-{\'e}tale map $f$ is modal, its {\'e}tale and modal factors
  agree (they are equivalently $f$), so by Theorem 1.2 of \cite{Jaz:Good.Fibrations},
  $f$ is a $\shape$-fibration. The result then follows since $\shape$ preserves
  all fibers of $\shape$-fibrations (see also Theorem 1.2 of \cite{Jaz:Good.Fibrations}).
\end{proof}

Importantly, it is also true that the shape of a \emph{crisp} $n$-connected type
is also $n$-connected by Theorem 8.6 of \cite{Jaz:Good.Fibrations}. It follows
that $\shape \B G$ is a delooping of $\shape G$ for crisp higher groups $G$, and
that this can continue for higher deloopings.
\begin{prop}\label{prop:top.sequence}
  Let $G$ be a crisp higher group. Then its universal $\infty$-cover $\inftycover{G}$
  is a higher group and $\pi : \inftycover{G} \to G$ is a homomorphism.
  Futhermore, if $G$ is $k$-commutative, then so is $\inftycover{G}$.
\end{prop}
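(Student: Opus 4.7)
The plan is to take $\B \inftycover G := \inftycover{\B G} = \fib((-)^{\shape} \colon \B G \to \shape \B G)$ as the candidate delooping of $\inftycover G$ and verify both that it loops correctly to $\inftycover G$ and that it is $0$-connected. Once that is done, $\pi$ is automatically a homomorphism because it is the loop of the pointed inclusion $\inftycover{\B G} \to \B G$ coming from the fiber.

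First I would loop the defining fiber sequence $\inftycover{\B G} \to \B G \to \shape \B G$. By the discussion preceding this proposition (invoking Theorem 8.6 of \cite{Jaz:Good.Fibrations}), $\shape \B G$ is a $0$-connected delooping of $\shape G$ when $G$ is crisp, so $\Omega \shape \B G = \shape G$ and the loop of the $\shape$-unit on $\B G$ is identified with the $\shape$-unit on $G$. This produces a fiber sequence $\Omega \inftycover{\B G} \to G \to \shape G$ that identifies $\Omega \inftycover{\B G}$ with $\inftycover G = \fib((-)^{\shape} \colon G \to \shape G)$, so that the loop of the pointed map $\inftycover{\B G} \to \B G$ is $\pi$.

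The main obstacle is showing that $\inftycover{\B G}$ is itself $0$-connected, since without this it does not qualify as a higher-group delooping. I would read this off the long exact sequence of the above fiber sequence: because $\pi_0 \B G = \pi_0 \shape \B G = \ast$, the set $\pi_0 \inftycover{\B G}$ is the cokernel of the induced map $\pi_1 \B G \to \pi_1 \shape \B G$. Rewriting $\pi_1 \B G = \trunc{G}_0$ and $\pi_1 \shape \B G = \trunc{\shape G}_0 = \shape \trunc{G}_0$ (the last equality by Proposition 4.5 of \cite{Jaz:Good.Fibrations}, using crispness of $G$), this is exactly the $\shape$-unit on $\trunc{G}_0$. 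Its fibers are $\shape$-connected as fibers of a modal unit, and since $\shape$ preserves the initial type (by unity of opposites applied to $\emptyset$), $\shape$-connected types are inhabited; hence the unit is surjective and the cokernel vanishes.

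For the $k$-commutativity claim, I would iterate the construction. If $G$ admits further deloopings $\B^{j} G$ for $j \leq k+1$, each one is again crisp and $0$-connected, so setting $\B^{j} \inftycover G := \inftycover{\B^{j} G}$ and rerunning the looping and $0$-connectedness arguments at each stage produces a compatible chain of $0$-connected deloopings of $\inftycover G$, equipping it with the required $k$-commutative structure.
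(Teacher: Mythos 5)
You take exactly the paper's route --- defining $\B^i\inftycover G := \fib((-)^{\shape} : \B^i G \to \shape\B^i G)$ and extending the fiber sequence --- and you sensibly try to supply the $0$-connectedness verification that the paper's terse proof leaves implicit. The overall plan and the looping identification are right. However, the step that clinches $0$-connectedness has a real constructive gap.

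You argue that because $\shape\emptyset = \emptyset$, any $\shape$-connected type must be inhabited; but $\shape A = \ast$ together with $\shape\emptyset = \emptyset$ only refutes $A \simeq \emptyset$, i.e.\ gives $\neg\neg A$ rather than $\trunc{A}_{-1}$. Passing from double negation to mere inhabitance is a form of excluded middle not available in the ambient type theory, so this does not establish surjectivity of the unit. (There is also a smaller misstep: Proposition 4.5 of \cite{Jaz:Good.Fibrations} gives $\trunc{\shape G}_0 = \shape_0 G$, not $\shape\trunc{G}_0$ --- $\shape$ does not preserve $0$-types, as the circle group $U(1)$ shows --- so the map in question is a $\shape_0$-unit, not a $\shape$-unit.) The fix the paper itself uses elsewhere is Corollary 9.12 of \cite{Shulman:Real.Cohesion}, which states that the $\shape$-unit $(-)^{\shape} : G \to \shape G$ is surjective; since $\trunc{-}_0$ preserves surjections, $\trunc{G}_0 \to \trunc{\shape G}_0$ is surjective and the long exact sequence argument closes as you intended. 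With that substitution, the remainder of your proof, including the iteration for $k$-commutativity, is correct.
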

\begin{proof}
  We may define
  $$\B^i \inftycover{G} :\equiv \fib((-)^{\shape} : \B^i G \to \shape \B^i G ).$$
  This lets us extend the fiber sequence:
\begin{center}
    \begin{tikzcd}
\inftycover{G} \ar[r, "\pi"] & G \arrow[r, "(-)^{\shape}"] & \shape{G} \arrow[lld, out=-30, in=150] \\
 \B\inftycover{G} \ar[r, "\B\pi"]   & \B G  \ar[r, "(-)^{\shape}"]       & \shape\B G \arrow[lld, out=-30, in=150]
 \\
 \B^2 \inftycover{G} \arrow[r, "\B^2 \pi"]    & \B^2 G \ar[r, "(-)^{\shape}"]
 & \shape \B^2 G\arrow[lld, out=-30, in=150]
 \\
 \cdots & & 
\end{tikzcd}
\end{center}\qedhere
  
\end{proof}

\subsection{The Infinitesimal Remainder of a Higher Group}\label{sec:infinitesimal.remainder}

In this section, we will investigate the infinitesimal remainder $\theta : G \to
\lie{g}$ of higher group $G$. The infinitesimal remainder is what is left of a
higher group when all of its crisp points have been made equal. Having
trivialized all substanstial difference between points, we are left with the
infinitesimal differences that remain.

\begin{defn}
  Let $G$ be a higher group. Define its \emph{infinitesimal remainder} to be
  $$\lie{g} :\equiv \fib((-)_{\flat} : \flat \B G \to \B G).$$
  Then, continuing the fiber sequence, we have
\begin{center}
    \begin{tikzcd}
{\flat G} \ar[r, "(-)_{\flat}"] & G \arrow[r, "\theta"] & \lie{g} \arrow[lld, out=-30, in=150] \\
 \flat \B G \arrow[r]    & \B G         &
\end{tikzcd}
\end{center}
which defines the quotient map $\theta : G \to \lie{g}$.
\end{defn}

\begin{rmk}
By its construction, we can see that $\lie{g}$ modulates flat connections on
trivial principal $G$-bundles, with respect to the interpretation of $\flat \B
G$ given in Remark \ref{rmk:flat_name}. In the setting of differential geometry,
such flat connections on trivial principal $G$-bundles are given by closed
$\mathfrak{g}$-valued $1$-forms, where here $\mathfrak{g}$ is the Lie algebra of the Lie
group $G$. In this setting, $\theta$ is the Mauer-Cartan form on $G$. This is
why we adopt the name $\theta : G \to \lie{g}$ for the infinitesimal remainder
in general. This can in fact be \emph{proven} in the setting of synthetic
differential geometry with tiny infinitesimals satisfying a principle of
constancy using a purely modal argument. See \cref{rmk:future.work.sdg} for a
further discussion.
\end{rmk}

\begin{rmk}
The infinitesimal remainder $\lie{g}$ is defined as the \emph{de Rham coefficient object}
of $\B G$ in Definition 5.2.59 of \cite{Schreiber:Differential.Cohomology}.
Schreiber defined $\flat_{dR} X$ for any (crisp) pointed type $X$ as the fiber
of $(-)^{\flat} : \flat X \to X$, so that $\lie{g} \equiv \flat_{dR} \B G$. We
focus on the case that $X$ is $0$-connected --- of the form $\B G$ --- and so
only consider the infinitesimal remainder of a higher group $G$.
\end{rmk}

While the infinitesimal remainder exists for any (crisp) higher group, it is not
necessarily itself a higher group. However, if $G$ is braided, then $\lie{g}$
will be a higher group.
\begin{prop}
If $G$ is a crisp $k$-commutative higher group, then $\lie{g}$ is a
$(k-1)$-commutative higher group. In particular, if $G$ is a braided higher
group, then $\lie{g}$ is a higher group and the remainder map $\theta : G \to
\lie{g}$ is a homomorphism.
\end{prop}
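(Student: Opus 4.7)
The plan is to mimic the definition of $\lie{g}$ one level up: assuming $G$ is crisp and $k$-commutative with $k \geq 1$, for each $i$ with $1 \le i \le k$, I set
$$\B^i \lie{g} \;:\equiv\; \fib\bigl((-)_{\flat} : \flat \B^{i+1} G \to \B^{i+1} G\bigr),$$
which makes sense because $\B^{i+1} G$ is a given crisp delooping of $G$. Three things then need checking: that $\B^i \lie{g}$ is a delooping of $\B^{i-1} \lie{g}$ (with the convention $\B^0 \lie{g} \equiv \lie{g}$); that $\B^i \lie{g}$ is $(i-1)$-connected so as to serve as a higher delooping; and that the remainder map $\theta : G \to \lie{g}$ is a homomorphism.

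For the delooping identification, I would loop the defining fiber sequence and commute $\Omega$ past $\flat$. By \cref{lem:flat.id.types} we have $\Omega \flat \B^{i+1} G = \flat \Omega \B^{i+1} G = \flat \B^i G$, so
$$\Omega \B^i \lie{g} \;=\; \fib\bigl(\flat \B^i G \to \B^i G\bigr) \;\equiv\; \B^{i-1} \lie{g}.$$
Iterating \cref{lem:flat.preserves.groups} further confirms that each $\flat \B^{i+1} G$ is itself the $(i+1)$-fold iterated delooping of $\flat G$, and in particular is $i$-connected.

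For the connectivity of $\B^i \lie{g}$, I would feed the defining fiber sequence $\B^i \lie{g} \to \flat \B^{i+1} G \to \B^{i+1} G$ into the long exact sequence of homotopy groups. Since both $\B^{i+1} G$ and $\flat \B^{i+1} G$ are $i$-connected, the LES sandwiches $\pi_j(\B^i \lie{g})$ for $j \le i-1$ between trivial groups, giving that $\B^i \lie{g}$ is $(i-1)$-connected. In particular $\B \lie{g}$ is $0$-connected, which equips $\lie{g}$ with the structure of a higher group; the existence of $\B^i \lie{g}$ up to $i = k$ then shows $\lie{g}$ is $(k-1)$-commutative.

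For the homomorphism claim, the same extended fiber sequence
$$\flat \B G \to \B G \xto{\B \theta} \B \lie{g} \to \flat \B^2 G \to \B^2 G$$
produces a pointed map $\B \theta : \B G \pto \B \lie{g}$ whose loop, by naturality of the long fiber sequence construction, is precisely the original $\theta : G \to \lie{g}$; this exhibits $\theta$ as a group homomorphism. The main technical hurdle is the iterated identification $\flat \B^{i+1} G = \B^{i+1} \flat G$ together with the attendant $i$-connectivity, which require a careful inductive application of \cref{lem:flat.preserves.groups} and verification that the higher deloopings inherit crispness and $0$-connectivity from $G$ at each stage; once this is secured, everything else reduces to unwinding the defining fiber sequences.
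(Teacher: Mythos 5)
Your proposal is correct and takes essentially the same approach as the paper: define $\B^i \lie{g}$ as the fiber of the $\flat$-counit on $\B^{i+1}G$ and continue the fiber sequence. The paper leaves the delooping identification, the connectivity check, and the homomorphism structure implicit, whereas you spell them out carefully; this is a matter of thoroughness, not a different route.
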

\begin{proof}
  We may define \[
\B^{i}\lie{g} :\equiv \fib((-)_{\flat} : \flat \B^{i+1} G \to \B^{i+1} G),
\]
which lets us continue the fiber sequence:
\begin{center}
    \begin{tikzcd}
{\flat G} \ar[r, "(-)_{\flat}"] & G \arrow[r, "\theta"] & \lie{g} \arrow[lld, out=-30, in=150] \\
 \flat \B G \arrow[r]    & \B G  \ar[r, "\B\theta"]       & \B\lie{g}\arrow[lld, out=-30, in=150]
 \\
 \flat \B^2 G \arrow[r]    & \B^2 G \ar[r, "\B^2\theta"]      & \B^2\lie{g}\arrow[lld, out=-30, in=150]
 \\
 \cdots & & 
\end{tikzcd}
\end{center}\qedhere
\end{proof}

\begin{rmk}
We can see the delooping $\B \theta : \B G \to \B \lie{g}$ of the infinitesimal remainder $\theta : G
\to \lie{g}$ as taking the \emph{curvature} of a principal $G$-bundle, in that
$\B \theta$ is an obstruction to the flatness of that bundle since
$$\flat \B G \to \B G \to \B \lie{g}$$
is a fiber sequence.
\end{rmk}

As with any good construction, the infinitesimal remainder is functorial in its
higher group. This is defined easily since the infinitesimal remainder is
constructed as a fiber.
\begin{defn}\label{defn:infinitesimal.remainder}
Let $f :: G \to H$ be a crisp homomorphism of higher groups with delooping $\B f
: \B G \pto \B H$. Then we have a pushforward $f_{\ast} : \lie{g} \pto \lie{h}$
given by $(t, p) \mapsto (\flat\B f(t), (\ap\, \B f\, p) \bullet \pt_{\B f} )$. 
This is the unique map fitting into the following diagram:
\[
  \begin{tikzcd}
    \lie{g} \ar[d] \ar[r, "f_{\ast}"] & \lie{h} \ar[d] \\
    \flat \B G \ar[d] \ar[r, "\flat \B f"] & \flat \B H \ar[d] \\
    \B G \ar[r, "\B f"'] & \B H
  \end{tikzcd}
\]
If $G$ and $H$ are $k$-commutative and $f$ is a $k$-commutative homomorphism,
then $f_{\ast}$ admits a unique structure of a $(k-1)$-commutative homomorphism
by defining $\B^{k-1} f_{\ast}$ to be the map induced by $\flat \B^k f$ on the fiber.
\end{defn}

We record a useful lemma: the fibers of the quotient map $ \theta : G \to
\lie{g}$ are all are identifiable with $\flat G$.
\begin{lem}\label{lem:fibers.of.inf.remainder}
  Let $G$ be a crisp higher group. For $t : \lie{g}$, we have $\trunc{\fib_{\theta}(t) = \flat G}$.
\end{lem}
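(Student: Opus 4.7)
The plan is to identify the fiber of $\theta$ at an arbitrary point with a path type in $\flat \B G$, and then invoke $0$-connectedness of $\flat \B G$ to conclude. By construction, $\lie{g} \equiv \dsum{u : \flat \B G}(u_{\flat} = \pt_{\B G})$, and extending the fiber sequence $\lie{g} \xto{p_1} \flat \B G \xto{(-)_{\flat}} \B G$ one step to the left identifies $\theta : G \to \lie{g}$ with the map sending $g : G = \Omega \B G$ to $(\pt^{\flat}, g)$, using that $\pt^{\flat}{}_{\flat} \equiv \pt_{\B G}$ to type the second component. In particular, $\theta$ exhibits $G$ as the fiber of the projection $p_1 : \lie{g} \to \flat \B G$ over $\pt^{\flat}$.

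The first step is then a routine unwinding of identity types in the $\Sigma$-type $\lie{g}$. For $t = (u, p) : \lie{g}$, a point of $\fib_{\theta}(t)$ consists of $g : G$ together with a path $\alpha : \pt^{\flat} = u$ in $\flat \B G$ and a transport condition in the family $u \mapsto (u_{\flat} = \pt_{\B G})$ that forces $g = (\ap\, (-)_{\flat}\, \alpha) \cdot p$. Since $\alpha$ uniquely determines $g$, the fiber collapses to
\[
\fib_{\theta}(t) \simeq (\pt^{\flat} = u) \qquad \text{in } \flat \B G.
\]
This is just the general observation that the fiber at $e$ of a fiber inclusion $\fib_{\psi}(b_0) \hookrightarrow E$ is the path type $(\psi(e) = b_0)$ in $B$.

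The second and final step uses \cref{lem:flat.preserves.groups}: $\flat \B G$ is a delooping of $\flat G$ and is therefore $0$-connected, so $\trunc{\pt^{\flat} = u}$ holds for every $u : \flat \B G$. Since our goal $\trunc{\fib_{\theta}(t) = \flat G}$ is itself a proposition, we may eliminate the truncation and pick such a path $\gamma$; then precomposition with $\gamma^{-1}$ yields the equivalence $(\pt^{\flat} = u) \simeq (\pt^{\flat} = \pt^{\flat}) = \Omega \flat \B G = \flat G$, where the last identification is again \cref{lem:flat.preserves.groups}. The only mildly delicate point is the identity-type unwinding in the first step, specifically checking that the transport equation forces $g$ to be uniquely determined by $\alpha$ and $p$; once the fiber is identified with a path type in $\flat \B G$, the rest is a standard application of $0$-connectedness.
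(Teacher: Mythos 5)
Your proof is correct and follows essentially the same path as the paper's, relying on the same ingredients: unwinding identity types in the $\Sigma$-type $\lie{g}$, the $0$-connectedness of $\flat \B G$, and the identification $\Omega \flat \B G = \flat G$ from \cref{lem:flat.preserves.groups}. The only cosmetic difference is the order of operations: the paper invokes connectedness up front to reduce to the case $u = \pt^{\flat}$ and then computes the fiber, whereas you first identify $\fib_{\theta}(t)$ with the path type $(\pt^{\flat} = u)$ for arbitrary $t = (u,p)$ and bring in connectedness only at the end.
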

\begin{proof}
By definition, $t : \lie{g}$ is of the form $(T, p)$ for $T : \flat \B G$ and $p
: T_{\flat} = \pt_{\B G}$. Since $\flat \B G$ is $0$-connected and we are trying
to prove a proposition, we may suppose that $q : T = \pt_{\flat \B G}$. We then
have that $t = (\pt_{\flat B G}, (-)_{\flat}{}_{\ast}(q) \bullet p)$, and
  therefore:
\begin{align*}
\fib_{\theta}(t) &\equiv \dsum{g : G} \left( (\pt_{\flat \B G}, g) = (\pt_{\flat \B G},(-)_{\flat}{}_{\ast}(q) \bullet p ) \right) \\
&=  \dsum{g : G} \dsum{a : \flat G} \left( a_{\flat} \bullet g  = (-)_{\flat}{}_{\ast}(q) \bullet p\right) \\
&=  \dsum{g : G} \dsum{a : \flat G} \left(  g  = a_{\flat}\inv \bullet(-)_{\flat}{}_{\ast}(q) \bullet p\right) \\
                 &= \flat G.\qedhere
\end{align*}

\end{proof}

The infinitesimal remainder is \emph{infinitesimal} in the sense that it has a
single crisp point. 
\begin{prop}\label{prop:inf.remainder.infinitesimal}
Let $G$ be a higher group. Then its infinitesimal remainder $\lie{g}$ is
infinitesimal in the sense that
$$\flat \lie{g} = \ast.$$
\end{prop}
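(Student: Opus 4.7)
The plan is to apply the comodality $\flat$ to the defining fiber sequence of $\lie{g}$ and exploit the idempotence of $\flat$. Recall that by definition
$$\lie{g} \equiv \fib((-)_{\flat} : \flat \B G \to \B G),$$
where we implicitly assume $G$ is crisp so that $\B G$ and its basepoint $\pt_{\B G}$ are crisp, and hence so is the counit $(-)_{\flat} : \flat \B G \to \B G$ and the crisp basepoint $\pt_{\B G} : \B G$.

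The first step is to apply \cref{lem:flat_fiber_sequenes} to the crisp map $(-)_{\flat} : \flat \B G \to \B G$ at the crisp basepoint $\pt_{\B G}$. This yields a fiber sequence
$$\flat \lie{g} \longrightarrow \flat \flat \B G \xrightarrow{\ \flat (-)_{\flat}\ } \flat \B G.$$
So it suffices to show that the map $\flat (-)_{\flat} : \flat \flat \B G \to \flat \B G$ is an equivalence, since then its fiber is contractible.

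The second step is to observe that this is just the idempotence of $\flat$: for any crisp type $X$, the counit $(-)_{\flat} : \flat \flat X \to \flat X$ is an equivalence, because $\flat X$ is itself crisply discrete (every element of $\flat X$ is of the form $u^{\flat}$ for a crisp $u :: X$, witnessing discreteness). By our Unity of Opposites axiom \ref{ax:unity_of_opposites}, or directly from the rules for $\flat$ in \cite{Shulman:Real.Cohesion}, this counit is an equivalence on any crisp type that is already of the form $\flat Y$. Applying this to $X \equiv \B G$ gives that $\flat (-)_{\flat} : \flat \flat \B G \to \flat \B G$ is an equivalence.

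The only minor subtlety --- and I expect this to be the one place requiring a touch of care --- is checking that the map appearing in the fiber sequence from \cref{lem:flat_fiber_sequenes} really is identified with the counit $(-)_{\flat}$ on $\flat \B G$ (as opposed to the functorial action $\flat (-)_{\flat}$ on an iterated $\flat$); these coincide by naturality of the counit, but this identification is what makes the argument work. Once this is in hand, the proof concludes in one line: the fiber of an equivalence is contractible, so $\flat \lie{g} = \ast$.
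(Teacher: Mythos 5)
Your proof is correct and follows essentially the same route as the paper: apply \cref{lem:flat_fiber_sequenes} to the defining fiber sequence $\lie{g} \to \flat \B G \to \B G$, then use that $\flat(-)_\flat : \flat\flat \B G \to \flat\B G$ is an equivalence (the paper cites Theorem~6.18 of \cite{Shulman:Real.Cohesion} for exactly this). The ``subtlety'' you flag is a non-issue pointing in the wrong direction: the map produced by \cref{lem:flat_fiber_sequenes} is the functorial action $\flat(-)_\flat$, and that is precisely the map Shulman's theorem shows to be an equivalence, so no identification with the counit on $\flat\B G$ is needed.
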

\begin{proof}
  By \cref{lem:flat_fiber_sequenes}, $\flat$ preserves the fiber sequence
  $$\lie{g} \to \flat \B G \to \B G.$$
 But $\flat(-)_{\flat} : \flat \flat \B G \to \flat \B G$ is an equivalence by
 Theorem 6.18 of \cite{Shulman:Real.Cohesion}, so
 $\flat \lie{g}$ is contractible.
\end{proof}
Despite being infinitesimal, we will see that $\lie{g}$ has (in general) a highly non-trivial
homotopy type.

\begin{rmk}
The infinitesimal remainder $\lie{g}$ is of special interest when $G$ is a Lie
group, since in this case the vanishing of the cohomology groups
$H^{\ast}(\lie{g}; \Zb/p )$ for all primes $p$ is equivalent to the
Friedlander-Milnor conjecture. The fact that this conjecture remains unproven is
a testament to the intricacy of the homotopy type of the infinitesimal space $\lie{g}$.
\end{rmk}

We note that $\lie{g}$ itself represents an obstruction to the discreteness of $G$.
\begin{prop}\label{prop:discrete.iff.remainder.contractible}
A crisp higher group $G$ is discrete if and only if its infinitesimal remainder
$\lie{g}$ is contractible.
\end{prop}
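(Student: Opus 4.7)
The plan is to run the argument directly off the defining fiber sequence
\[
\lie{g} \to \flat \B G \xto{(-)_{\flat}} \B G,
\]
together with \cref{lem:flat.preserves.groups}, which tells us that this map deloops the counit $(-)_{\flat} : \flat G \to G$ whose being-an-equivalence is what it means for $G$ to be discrete.

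For the forward direction, suppose $G$ is discrete, so $(-)_{\flat} : \flat G \to G$ is an equivalence. By \cref{lem:flat.preserves.groups}, this map is $\Omega$ of $(-)_{\flat} : \flat \B G \to \B G$, and both $\flat \B G$ and $\B G$ are pointed and $0$-connected. A pointed map between $0$-connected types that induces an equivalence on loop spaces is itself an equivalence, so $(-)_{\flat} : \flat \B G \to \B G$ is an equivalence, hence has contractible fiber, giving $\lie{g} \simeq \ast$.

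For the converse, suppose $\lie{g}$ is contractible. Then the fiber of $(-)_{\flat} : \flat \B G \to \B G$ over $\pt_{\B G}$ is contractible. Since $\B G$ is $0$-connected, any $b : \B G$ merely equals $\pt_{\B G}$, so $\fib_{(-)_{\flat}}(b)$ merely equals $\lie{g}$; as contractibility is a proposition, every fiber is contractible, whence $(-)_{\flat} : \flat \B G \to \B G$ is an equivalence. Looping, $(-)_{\flat} : \flat G \to G$ is an equivalence, so $G$ is discrete.

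I do not anticipate a real obstacle here: the equivalence is essentially a tautology once one recognizes that $\lie{g}$ was constructed as the fiber of the very counit whose invertibility defines discreteness. The only subtlety is moving between $\flat G \to G$ and $\flat \B G \to \B G$, handled on the forward side by delooping an equivalence and on the backward side by using $0$-connectedness of $\B G$ to propagate contractibility of the fiber from the basepoint to all points.
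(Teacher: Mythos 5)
Your proof is correct and takes essentially the same approach as the paper: both directions are routed through the observation that $(-)_{\flat} : \flat \B G \to \B G$ is an equivalence iff $G$ is discrete (by looping/delooping between $0$-connected pointed types) and iff $\lie{g}$ is contractible (since $\lie{g}$ is its fiber). You just spell out the connectedness bookkeeping that the paper leaves implicit.
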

\begin{proof}
If $G$ is discrete, then $(-)_{\flat} : \flat G \to G$ is an equivalence and so
$(-)_{\flat} : \flat \B G \to \B G$ is an equivalence: this implies that
$\lie{g} = \ast$. On the other hand, if $\lie{g} = \ast$ then $(-)_{\flat} :
\flat \B G \to \B G$ is an equivalence and so its action on loops is an equivalence.
\end{proof}

Using \cref{thm:Good.Fibrations.Etale}, we can quickly show that $\theta : G \to
\lie{g}$ is an $\infty$-cover. This gives us the right hand pullback square in our modal
fracture hexagon.
\begin{prop}\label{prop:inf.remainder.infty.cover}
Let $G$ be a crisp $\infty$-group. Then the infinitesimal remainder $\theta : G
\to \lie{g}$ is an $\infty$-cover. In particular, the $\shape$-naturality
square:
\[
\begin{tikzcd}
  G \ar[r, "\theta"] \ar[d, "(-)^{\shape}"'] & \lie{g} \ar[d, "(-)^{\shape}"] \\
  \shape G \ar[r, "\shape \theta"'] & \shape \lie{g}
\end{tikzcd}
\]
is a pullback. If, furthermore, $G$ is (crisply) an $n$-type, then $\theta$ is an $(n + 1)$-cover.
\end{prop}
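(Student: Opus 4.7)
The plan is to reduce everything to \cref{thm:Good.Fibrations.Etale}, using \cref{lem:fibers.of.inf.remainder} to identify the fibers and playing the crispness of $G$ against the fact that $\flat$-modal types are discrete.

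For the first claim, I would proceed as follows. By \cref{lem:fibers.of.inf.remainder}, every fiber $\fib_\theta(t)$ over $t : \lie{g}$ is merely equal to $\flat G$. Since $G$ is crisp, so is $\flat G$, and $\flat G$ is always $\flat$-modal (the counit $(-)_{\flat} : \flat \flat G \to \flat G$ is an equivalence), hence discrete by Axiom \ref{ax:unity_of_opposites}. The hypothesis of \cref{thm:Good.Fibrations.Etale} is therefore satisfied with $F := \flat G$, yielding that $\theta$ is $\shape$-\'etale. Unfolding the definition of $\shape$-\'etale, this is precisely the statement that the displayed $\shape$-naturality square is a pullback, and in particular $\theta$ is an $\infty$-cover as claimed.

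For the second claim, suppose $G$ is crisply an $n$-type. Then $\flat G$ is also an $n$-type, since $\flat$ preserves crisp $n$-truncation (this follows by induction on $n$ using \cref{lem:flat.id.types} to reduce to identity types, or equivalently from $\flat$ being a right adjoint on crisp types via \cref{thm:adjointness_of_opposites}). Hence the fibers of $\theta$, being merely $\flat G$, are themselves $n$-types and in particular $(n+1)$-types, so the truncation condition in the definition of $(n+1)$-cover is satisfied. It remains to upgrade $\shape$-\'etaleness to $\shape_{n+2}$-\'etaleness; for this I would invoke the evident $n$-truncated analogue of \cref{thm:Good.Fibrations.Etale}: a map whose fibers are merely equal to a fixed crisp, discrete $n$-type is $\shape_{n+1}$-\'etale. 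This should be either stated in or extractable from \cite{Jaz:Good.Fibrations} using Proposition 4.5 there, which guarantees that $\shape_{n+1}$ agrees with $\trunc{\shape(-)}_n$ on crisp types.

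The only step that gives me pause is that last appeal: if the truncated version of \cref{thm:Good.Fibrations.Etale} is not stated explicitly in \cite{Jaz:Good.Fibrations}, one would need to reproduce its proof with $\shape$ replaced everywhere by $\shape_{n+1}$, using the truncation comparison above to ensure that the fiber type $\flat G$ remains $\shape_{n+1}$-modal (which is immediate, since discrete $n$-types are $\shape_{n+1}$-modal). Everything else, including the pullback square for the untruncated case, is just a direct application of results already developed in \cref{sec:universal.cover,sec:infinitesimal.remainder}.
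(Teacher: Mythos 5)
Your proof is correct and follows essentially the same route as the paper: identify the fibers of $\theta$ as merely $\flat G$ via \cref{lem:fibers.of.inf.remainder}, note $\flat G$ is crisply discrete, and apply the ``good fibrations'' criterion of \cref{thm:Good.Fibrations.Etale}. You are slightly more careful than the paper on the $(n+1)$-cover case, correctly flagging that one needs a truncated analogue of \cref{thm:Good.Fibrations.Etale} (the paper simply cites Theorem 6.1 for both cases even though, as stated, it only covers $\infty$-covers), and the paper's own justification that $\flat G$ is an $n$-type is Theorem 6.6 of \cite{Shulman:Real.Cohesion} rather than the inductive argument you sketch, but both are valid.
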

\begin{proof}
By \cref{thm:Good.Fibrations.Etale}, to show that $\theta : G \to \lie{g}$ is an
$\infty$-cover (resp. an $(n + 1)$-cover) it suffices to show that the fibers are
merely equivalent to a crisply discrete type (resp. a crisply discrete $n$-type). But by
\cref{lem:fibers.of.inf.remainder}, the fibers of $\theta : G \to \lie{g}$ are
all merely equivalent to $\flat G$ which is crisply discrete (and, by Theorem
6.6 of \cite{Shulman:Real.Cohesion}, if $G$ is (crisply) an $n$-type then so is $\flat G$).
\end{proof}

There is a sense in which the infinitesimal remainder of a higher group behaves
like its Lie algebra. Just as the Lie algebra of a Lie group is the same as the Lie algebra of its
universal cover, we can show that the infinitesimal remainder of a higher group
is the same as that of its universal $\infty$-cover.
\begin{prop}\label{prop:exact.sequence.inf.remainder}
  Let $G \xto{\phi} H \xto{\psi} K$ be a crisp exact sequence of higher groups. Then
  \begin{enumerate}
  \item $K$ is discrete if and only if $ \phi_{\ast} : \lie{g} \to \lie{h}$ is
    an equivalence.
  \item $G$ is discrete if and only if $\psi_{\ast} : \lie{h} \to \lie{k}$ is an
    equivalence.
  \end{enumerate}
\end{prop}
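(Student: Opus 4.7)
The plan is to consider the $\flat$-naturality square of $\B\phi$,
\[
  \begin{tikzcd}
    \flat \B G \ar[r, "(-)_{\flat}"] \ar[d, "\flat \B\phi"'] & \B G \ar[d, "\B\phi"] \\
    \flat \B H \ar[r, "(-)_{\flat}"'] & \B H,
  \end{tikzcd}
\]
and to characterize its being a pullback in two different ways, one recovering $\phi_{\ast}$ and one recovering the discreteness of $K$. The fibers of the horizontal maps $(-)_{\flat}$ over the basepoints of $\B G$ and $\B H$ are, by definition, $\lie{g}$ and $\lie{h}$, and the square induces the map $\phi_{\ast}:\lie{g} \to \lie{h}$ between them. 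Since $\B H$ is $0$-connected (by \cref{lem:flat.preserves.groups}) and ``being an equivalence'' is a mere proposition that can be transported along merely existing paths, the square is a pullback if and only if this basepoint fiber map is an equivalence. So the pullback condition is equivalent to $\phi_{\ast}$ being an equivalence.

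Dually, by \cref{lem:flat_fiber_sequenes} the comodality $\flat$ preserves the given fiber sequence $\B G \to \B H \to \B K$, so the fibers of the vertical maps $\flat \B\phi$ and $\B\phi$ over the basepoints of $\flat \B H$ and $\B H$ are $\flat K$ and $K$ respectively, with induced map the counit $(-)_{\flat}:\flat K \to K$. By the same $0$-connectedness argument applied this time to $\flat \B H$ (again $0$-connected by \cref{lem:flat.preserves.groups}), the square is a pullback if and only if $(-)_{\flat}:\flat K \to K$ is an equivalence, i.e., if and only if $K$ is discrete. Combining the two characterizations proves (1).

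Part (2) follows by the symmetric argument applied to the $\flat$-naturality square of $\B\psi$: the horizontal-fiber analysis shows this square is a pullback if and only if $\psi_{\ast}$ is an equivalence, while the vertical-fiber analysis, using $\B G = \fib(\B\psi)$ from the given fiber sequence, shows it is a pullback if and only if $(-)_{\flat}:\flat \B G \to \B G$ is an equivalence, i.e., if and only if $\B G$ is discrete. Finally, $G$ is discrete if and only if $\B G$ is, because $\flat \B G$ and $\B G$ are pointed $0$-connected types (\cref{lem:flat.preserves.groups}) and a pointed map between such is an equivalence if and only if its loop map is. The main subtlety I expect is the ``basepoint fiber equivalence implies the square is a pullback'' step: this is a general fact for commuting squares over an appropriately $0$-connected base, but its use here requires the propositionality of ``is an equivalence'' to spread the basepoint equivalence to all fibers.
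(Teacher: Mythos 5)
Your proof is correct, and it's essentially the same argument as the paper's, just organized differently. The paper assembles the full $3\times 3$ diagram with $\lie{g}\to\lie{h}\to\lie{k}$ along the top and invokes the $3\times 3$ lemma in one direction and the pullback-square characterization in the other; you instead work directly with the $\flat$-naturality square of $\B\phi$ (resp.\ $\B\psi$) and extract \emph{both} equivalent conditions --- $\phi_\ast$ being an equivalence from horizontal fibers, $K$ being discrete from vertical fibers --- from the single pullback condition, which makes the two directions of the biconditional symmetric and arguably cleaner. You also thereby avoid invoking \cref{prop:discrete.iff.remainder.contractible}, which the paper uses for one direction of part (2), instead arguing directly that $\B G$ discrete $\Leftrightarrow$ $G$ discrete via the loop-space criterion for equivalences of pointed $0$-connected types.

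One small slip: for the horizontal-fiber analysis of the square with $\flat\B G$, $\B G$, $\flat\B H$, $\B H$ at the four corners, the fibers $\lie{g}$ and $\lie{h}$ are parameterized by points of the top-right corner $\B G$, so it is the $0$-connectedness of $\B G$ --- which holds by hypothesis that $G$ is a higher group, not by \cref{lem:flat.preserves.groups} --- that lets you reduce to the basepoint; \cref{lem:flat.preserves.groups} is what you correctly need for the vertical-fiber analysis, where the parameterizing corner is $\flat\B H$. Since all four corners are in fact $0$-connected, this misattribution does no harm, but the citation is off as stated.
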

\begin{proof}
We consider the following diagram in which each horizontal and vertical sequence
is a fiber sequence:
\[
  \begin{tikzcd}
    \lie{g} \ar[d] \ar[r] & \lie{h}\ar[d] \ar[r] & \lie{k} \ar[d] \\
    \flat \B G \ar[d] \ar[r]& \flat \B H \ar[d] \ar[r]& \flat \B K \ar[d] \\
    \B G \ar[r]& \B H\ar[r] & \B K
  \end{tikzcd}
\]
If $K$ is discrete, then $\lie{k} = \ast$ and so $\lie{g} = \lie{h}$. On the
other hand, if $\lie{g} = \lie{h}$, then the bottom left square of the above
diagram is a pullback. Therefore, the it induces an equivalence on the fibers of
the horizontal maps:
\[
  \begin{tikzcd}
    \flat K \ar[d, "\sim"] \ar[r] & \flat \B G \ar[d] \ar[r] & \flat \B H \ar[d] \\
    K \ar[r] & \B G \ar[r] & \B H
  \end{tikzcd}
\]
This shows that $K$ is discrete.

If $\psi_{\ast} : \lie{h} \to \lie{k}$ is an equivalence, then its fiber
$\lie{g}$ is contractible. Therefore, $G$ is discrete. On the other hand, if $G$ is
discrete, then the bottom right square is a pullback, and therefore the induced
map on vertical fibers is an equivalence. This map is $\psi_{\ast} : \lie{h} \to
\lie{k}$.
\end{proof}

\begin{cor}\label{cor:inf_remainder_infty_cover}
The universal $\infty$-cover $\pi : \inftycover{G} \to G$ induces an equivalence
$\inftycover{\lie{g}} = \lie{g}$ fitting into the following commutative
diagram:
\[
  \begin{tikzcd}
    \inftycover{ \lie{g} } \ar[d] \ar[r, equals] & \lie{g}\ar[d] \ar[r] & \ast \ar[d] \\
    \flat \B \inftycover{G} \ar[d] \ar[r]& \flat \B G \ar[d] \ar[r]& \flat
    \shape \B G \ar[d, equals] \\
    \B \inftycover{G} \ar[r]& \B G\ar[r] & \shape \B G 
  \end{tikzcd}
\]

In particular, this gives us a long
fiber sequence
\[
    \begin{tikzcd}
\flat\inftycover{G} \ar[r, "(-)_{\flat}"] & \inftycover{G} \arrow[r, "\theta"] & \lie{g} \arrow[lld, out=-30, in=150] \\
 \flat\B\inftycover{G} \ar[r, "\B\pi"]   & \B \inftycover{G} 
\end{tikzcd}
\]
which forms the top fiber sequence of the modal fracture hexagon.
\end{cor}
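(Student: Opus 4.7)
The plan is to reduce the corollary to \cref{prop:exact.sequence.inf.remainder}, applied to the defining fiber sequence of the universal $\infty$-cover viewed as a short exact sequence of higher groups. First, I would observe that $\inftycover{G} \xrightarrow{\pi} G \xrightarrow{(-)^{\shape}} \shape G$ is a crisp exact sequence of higher groups: $\inftycover{G}$ is a higher group by \cref{prop:top.sequence}, and $\shape G$ is a higher group because the crisp $0$-connectedness of $\B G$ is preserved by $\shape$ (Theorem 8.6 of \cite{Jaz:Good.Fibrations}), so that $\shape \B G$ deloops $\shape G$ and the sequence deloops to $\B\inftycover{G} \to \B G \to \shape \B G$. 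Since $\shape G$ is $\shape$-modal and hence discrete, \cref{prop:exact.sequence.inf.remainder}(1) with $K = \shape G$ immediately yields that $\pi_{\ast} : \inftycover{\lie{g}} \to \lie{g}$ is an equivalence.

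Next, I would assemble the $3 \times 3$ diagram from the pieces already available. The bottom row is a fiber sequence by the definition of $\inftycover{G}$ via \cref{prop:top.sequence}. Applying $\flat$ to the bottom row gives the middle row as a fiber sequence via \cref{lem:flat_fiber_sequenes}. The left and middle columns are fiber sequences by the definitions of $\inftycover{\lie{g}}$ and $\lie{g}$ respectively. For the right column, since $\shape \B G$ is crisply discrete, the counit $(-)_{\flat} : \flat \shape \B G \to \shape \B G$ is an equivalence, so its fiber is $\ast$; this forces the upper-right entry of the diagram to be $\ast$. The top row is then induced by the universal property of fibers, and commutativity of the whole diagram follows from naturality of the fiber construction, with the identification of $\inftycover{\lie{g}} \to \lie{g}$ as $\pi_{\ast}$ matching the equivalence obtained above.

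Finally, the long fiber sequence comes from extending the left column backwards. The sequence $\inftycover{\lie{g}} \to \flat \B\inftycover{G} \to \B\inftycover{G}$, extended one step using the long-fiber construction, yields
\[
\Omega \flat \B\inftycover{G} \to \Omega \B\inftycover{G} \to \inftycover{\lie{g}} \to \flat\B\inftycover{G} \to \B\inftycover{G}.
\]
Identifying $\Omega \B\inftycover{G} = \inftycover{G}$ and $\Omega \flat\B\inftycover{G} = \flat\inftycover{G}$ via \cref{lem:flat.preserves.groups}, and substituting the equivalence $\inftycover{\lie{g}} = \lie{g}$, gives the desired top fiber sequence of the modal fracture hexagon.

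The main subtlety I anticipate is the initial verification that the fiber sequence in question is genuinely a crisp exact sequence of higher groups so that \cref{prop:exact.sequence.inf.remainder} applies; this rests on the crisp preservation of connectedness by $\shape$, without which $\shape \B G$ would not be known to deloop $\shape G$. Everything else is a formal diagram chase using the fact that $\flat$ preserves crisp fiber sequences and that discreteness trivializes the right column.
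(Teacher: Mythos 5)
Your proposal is correct and follows essentially the same route as the paper: the corollary is obtained by applying \cref{prop:exact.sequence.inf.remainder}(1) to the crisp exact sequence $\inftycover{G} \to G \to \shape G$ (whose delooping $\B\inftycover{G} \to \B G \to \shape \B G$ is a fiber sequence by \cref{prop:top.sequence}), noting that $\shape G$ is discrete so its infinitesimal remainder is trivial, and then reading off the $3\times 3$ diagram and the extended fiber sequence exactly as you describe. Your care in verifying that $\shape \B G$ genuinely deloops $\shape G$ via the crisp preservation of connectedness is the same point the paper flags just before \cref{prop:top.sequence}.
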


\subsection{The Modal Fracture Hexagon}

We have seen the two main fiber sequences
\[
    \begin{tikzcd}
\inftycover{G} \ar[r, "\pi"] & G \arrow[r, "(-)^{\shape}"] & \shape{G} \arrow[lld, out=-30, in=150] \\
 \B\inftycover{G} \ar[r, "\B\pi"]   & \B G  \ar[r, "(-)^{\shape}"]       & \shape\B G 
\end{tikzcd}
\quad\mbox{and}\quad
\begin{tikzcd}
{\flat G} \ar[r, "(-)_{\flat}"] & G \arrow[r, "\theta"] & \lie{g} \arrow[lld, out=-30, in=150] \\
\flat \B G \arrow[r]    & \B G         &
\end{tikzcd}
\]
associated to a higher group $G$. Now, when we apply $\flat$ to the left sequence and $\shape$ to the right
sequence, we find the sequences 
\[
    \begin{tikzcd}
\flat\inftycover{G} \ar[r, "\flat\pi"] & \flat G \arrow[r, "(-)^{\shape} \circ (-)_{\flat}"] & \shape{G} \arrow[lld, out=-30, in=150] \\
 \flat\B\inftycover{G} \ar[r, "\flat\B\pi"]   & \flat\B G  \ar[r,
 "(-)^{\shape}\circ (-)_{\flat}"]       & \shape\B G 
\end{tikzcd}
\quad\mbox{and}\quad
\begin{tikzcd}
{\flat G} \ar[r, "(-)^{\shape} \circ (-)_{\flat}"] & \shape G \arrow[r, "\shape\theta"] & \shape\lie{g} \arrow[lld, out=-30, in=150] \\
\flat \B G \arrow[r]    & \shape \B G         &
\end{tikzcd}
\]
which are \emph{the same sequence, just shifted over}. This gives us the bottom exact sequence of
our modal fracture hexagon, reading the sequence on the left. But it also proves that $\shape \lie{g} = \flat \B
\inftycover{G}$, which gives us the top exact sequence of our modal fracture
hexagon by \cref{cor:inf_remainder_infty_cover}.

Of course, we need to be able to apply $\shape$ freely to fiber sequences to
fulfil this argument. But $\shape$ is not left exact, and so does not preserve fiber
sequences in general. Luckily, Theorem 6.1 of \cite{Jaz:Good.Fibrations} gives us a
trick for showing that a map is a $\shape$-fibration which allows us to prove
this general lemma.
\begin{lem}\label{lem:classifying.map.fibration}
Let $G$ be a crisp higher group (that is, $\B G$ is a crisply pointed
$0$-connected type). Then any crisp map $f :: X \to \B G$ is a $\shape$-fibration.
\end{lem}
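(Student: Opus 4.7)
The plan is to apply the criterion from Theorem 6.1 of \cite{Jaz:Good.Fibrations}, which in its fibration form says that a crisp map is a $\shape$-fibration whenever each of its fibers is merely equivalent to a single crisp type. The discreteness hypothesis appearing in the version cited as \cref{thm:Good.Fibrations.Etale} is only used to upgrade the conclusion from ``$\shape$-fibration'' to ``$\shape$-{\'e}tale''; for the fibration statement alone, crispness of the common fiber is all that is required. The essential input from our side is the $0$-connectedness of $\B G$.

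To apply this, I would first set $F :\equiv \fib_f(\pt_{\B G})$. Since both $f :: X \to \B G$ and $\pt_{\B G} :: \B G$ are crisp, $F$ is itself a crisp type. Next I would show that for every $b : \B G$ there is a mere equivalence $\trunc{\fib_f(b) = F}$. Because this statement is a mere proposition, $0$-connectedness of the pointed type $\B G$ lets us reduce to the case $b \equiv \pt_{\B G}$, where $\refl_F$ suffices. Feeding this into the cited criterion concludes that $f$ is a $\shape$-fibration.

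The only real obstacle is pinning down the right form of the external ``trick'' from \cite{Jaz:Good.Fibrations}; once it is in hand, the argument is a one-line consequence of $\B G$ being $0$-connected and pointed in a crisp way, and parallels the strategy used in \cref{prop:inf.remainder.infty.cover}, where the {\'e}tale version of the same criterion was applied to $\theta : G \to \lie{g}$ using \cref{lem:fibers.of.inf.remainder}.
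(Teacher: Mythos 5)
Your argument is correct and follows the same route as the paper: reduce to the base point fiber using $0$-connectedness of $\B G$, note that $\fib_f(\pt)$ is crisp, and invoke Theorem 6.1 of \cite{Jaz:Good.Fibrations}. The one small difference is in how the criterion is packaged: the paper passes from "all fibers merely equivalent to $\fib_f(\pt)$" to "all homotopy types of fibers merely equivalent to $\shape\fib_f(\pt)$, a crisp discrete type," to match the precise hypothesis of Theorem 6.1, whereas you posit a "fibration form" in which crispness of a common fiber alone suffices; since mere equivalence of fibers to a crisp $F$ implies mere equivalence of their shapes to the crisp discrete $\shape F$, your hypothesis implies the one the paper uses, so the step you leave implicit is sound even if the exact phrasing of the cited theorem differs.
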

\begin{proof}
Since $\B G$ is $0$-connected, all the fibers of $f$ are merely equivalent to
the fiber $\fib_f(\pt)$ over the basepoint. Therefore, their homotopy types are merely
equivalent to $\shape \fib_f(\pt)$, which is a crisp, discrete type. It follows
by Theorem 6.1 of \cite{Jaz:Good.Fibrations} that $f$ is a $\shape$-fibration.
\end{proof}

This means that we can freely apply $\shape$ to crisp fiber sequences of
$0$-connected types. This concludes our proof of the main theorem.

\begin{thm}\label{thm.main_theorem}
  For a crisp $\infty$-group $G$, there is a \emph{modal
    fracture hexagon}:
  \[
    \begin{tikzcd}
      & \inftycover{G} \ar[dr, color=blue, "\pi"] \ar[rr] & & \lie{g}
      \ar[dr,  "(-)^{\shape}"] \\
      \flat \inftycover{G} \ar[ur, "(-)_{\flat}"] \ar[dr,  "\flat \pi"'] & &
       G \ar[dr,color=blue, "(-)^{\shape}"'] \ar[ur, color=red, " \theta"] & & \shape \lie{g} \\
      & \flat  G \ar[rr] \ar[ur, color=red, "(-)_{\flat}"'] & & \shape  G \ar[ur,
     "\shape  \theta"'] &
    \end{tikzcd}
  \]
  where
  \begin{itemize}
  \item $\theta : G \to \lie{g}$ is the \emph{infinitesimal remainder} of $G$, the quotient $G\sslash \flat G$, and 
  \item $\pi : \inftycover{G} \to G$ is the \emph{universal (contractible) $\infty$-cover} of $G$.
  \end{itemize}
  Moreover, 
  \begin{enumerate}
  \item The middle diagonal sequences are fiber sequences.
  \item The top and bottom sequences are fiber sequences.
  \item Both squares are pullbacks.
    \end{enumerate}
  Furthermore, the homotopy type of $\lie{g}$ is a delooping of $\flat\inftycover{G}$:
    $$\shape \lie{g} = \flat \B \inftycover{G}.$$
Therefore, if $G$ is $k$-commutative for $k \geq
  1$ (that is, admits futher deloopings $\B^{k + 1} G$), then we may continue the
  modal fracture hexagon on to $\B^{k}G$.
\end{thm}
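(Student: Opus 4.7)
The plan is to assemble the hexagon from the structural lemmas already proven. The two middle diagonals are fiber sequences by construction: $\inftycover{G} \to G \to \shape G$ is the definition of $\inftycover{G}$ as the fiber of the $\shape$-unit, while $\flat G \to G \to \lie{g}$ is obtained by looping the defining fiber sequence $\lie{g} \to \flat \B G \to \B G$ of the infinitesimal remainder. The left and right pullback squares have already been established as \cref{prop:left_fracture} and \cref{prop:inf.remainder.infty.cover}, which respectively show that $\pi$ and $\theta$ are $\shape$-\'etale. The top fiber sequence $\flat \inftycover{G} \to \inftycover{G} \to \lie{g}$ is precisely the statement of \cref{cor:inf_remainder_infty_cover}.

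This leaves the bottom fiber sequence and the identification $\shape \lie{g} = \flat \B \inftycover{G}$, both of which require applying $\shape$ to crisp fiber sequences. For the bottom, I apply $\shape$ to the right middle diagonal $\flat G \to G \to \lie{g}$: since $\theta$ is $\shape$-\'etale by \cref{prop:inf.remainder.infty.cover} and hence a $\shape$-fibration, $\shape$ preserves its fiber. Using that $\flat G$ is $\shape$-modal (flat types being discrete by unity of opposites), the resulting fiber sequence reads $\flat G \to \shape G \to \shape \lie{g}$ with the first map recognized as $(-)^{\shape} \circ (-)_{\flat}$. For the identification, I apply $\shape$ to the fiber sequence $\lie{g} \to \flat \B \inftycover{G} \to \B \inftycover{G}$ supplied by \cref{cor:inf_remainder_infty_cover}. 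Here the target map is a crisp map into $\B \inftycover{G}$, which is a crisp higher group by \cref{prop:top.sequence}, so by \cref{lem:classifying.map.fibration} it is a $\shape$-fibration. The resulting fiber sequence $\shape \lie{g} \to \flat \B \inftycover{G} \to \shape \B \inftycover{G}$ collapses to the equivalence $\shape \lie{g} = \flat \B \inftycover{G}$, because $\shape \B \inftycover{G} = \B \shape \inftycover{G} = \ast$ by the discussion preceding \cref{prop:top.sequence} and the $\shape$-contractibility of $\inftycover{G}$ by definition.

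The main technical hurdle throughout is that $\shape$ is not left exact, so each application of it to a fiber sequence must be justified by exhibiting the fibered map as a $\shape$-fibration. This is handled uniformly by \cref{lem:classifying.map.fibration} together with the \'etale-implies-fibration criterion recalled from \cite{Jaz:Good.Fibrations}. Finally, the iteration of the hexagon for $k$-commutative $G$ with $k \geq 1$ follows by observing that each further delooping $\B^i G$ for $0 \leq i \leq k$ remains a crisp higher group (by \cref{prop:top.sequence} applied iteratively), so the whole construction reapplies to produce hexagons on each $\B^i G$.
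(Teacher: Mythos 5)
Your proof is correct and takes essentially the same route as the paper. The one cosmetic difference is in how the bottom fiber sequence and the identification $\shape \lie{g} = \flat \B \inftycover{G}$ are obtained: the paper applies $\flat$ to the left middle diagonal and $\shape$ to the right middle diagonal, and then observes that the two resulting long fiber sequences are the \emph{same} sequence shifted by one, reading off both the bottom sequence and the identification at once. You instead derive each piece directly --- applying $\shape$ to $\flat G \to G \to \lie{g}$ for the bottom sequence (using that $\theta$ is $\shape$-\'etale and hence a $\shape$-fibration) and applying $\shape$ to $\lie{g} \to \flat \B\inftycover{G} \to \B \inftycover{G}$ for the identification (using \cref{lem:classifying.map.fibration} and $\shape \B \inftycover{G} = \ast$). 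Both arguments invoke exactly the same supporting facts, so this is a repackaging rather than a genuinely different proof.
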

\begin{proof}
  We assemble the various components of the proof here.
  \begin{enumerate}
    \item The middle diagonal sequences are fiber sequences by definition (see
      \cref{defn:universal.infty.cover} and \cref{defn:infinitesimal.remainder}).
    \item The top sequence was shown to be a fiber sequence in
      \cref{cor:inf_remainder_infty_cover}. We showed that the bottom sequence
      is a fiber sequence at the beginning of this subsection.
    \item The left square was shown to be a pullback in \cref{prop:left_fracture}, and the right
      sequence in \cref{prop:inf.remainder.infty.cover}.
  \end{enumerate}
  Finally, we calculated the homotopy type of $\lie{g}$ at the beginning of this subsection.
\end{proof}
\section{Ordinary Differential Cohomology}\label{sec:diff.coh}

In this section, we will use modal fracture to construct ordinary differential
cohomology in cohesive homotopy type theory. We will recover a differential
hexagon for ordinary differential cohomology which very closely resembles the
classical hexagon; however, as de Rham's theorem does not hold for all types, we
will not recover the classical hexagon exactly. For more discussion of these
subtleties, see \cref{sec:subtleties}.

In \cite{Bunke-Nikolaus-Volke:Differential.Cohomology}, Bunke, Nikolaus, and V{\:o}kel show that
differential cohomology theories can be understood as spectra in the
$\infty$-topos of sheaves on a site of manifolds. Schreiber notes in Proposition
4.4.9 of \cite{Schreiber:Differential.Cohomology} that the simpler site
consisting of Euclidean spaces and smooth maps between them yields the same topos
of sheaves, and proves in Proposition 4.4.8 that this $\infty$-topos is
cohesive. This topos, and the similar $\infty$-Dubuc topos (called the
$\infty$-Cahiers topos in Remark 4.5.6 and
$\textbf{SynthDiff}\infty\textbf{Grpd}$ in Definition 4.5.7 of \emph{ibid.}),
will be our intended model for cohesive homotopy type theory in this section.

The theme of this paper is that the main feature of differential cohomology
--- the differential cohomology hexagon --- is not of a particularly
differential character, but arises from the more basic opposition between an
adjoint modality $\shape$ and comodality $\flat$. As we saw in the previous
section, in the presence of these (co)modalities, any higher group may be
fractured in a manner resembling the differential cohomology hexagon.

We will take a similarly general view in constructing ordinary differential
cohomology. The key idea in ordinary differential cohomology is the equipping of
differential form data to integral cohomology. We will therefore focus on
cohomology theories (in particular, $\infty$-commutative higher groups or
connective spectra) which arise by equipping an existing cohomology theory with
extra data representing the cocycles. Our exposition will focus on ordinary
differential cohomology, but this extra generality will enable us to define
combinatorial analogues of ordinary differential cohomology as well (see
\cref{sec:combinatorial.analogues}).

\subsection{Assumptions and Preliminaries}

We will need to assume something extra in our type theory to give it a
differential geometric character, since there are many models of cohesion which
have nothing to do with differential geometry. We will make the following
assumptions in this section.
\begin{assumption}\label{assumption:SDG}
  We assume the following in this section. All assumptions are crisp.
  \begin{itemize}
    \item (Smooth reals): We assume the existence of the \emph{smooth reals}
      $\Rb$, which externally will be the smooth set represented by the manifold
      $\Rb$. We will not be entirely specific here about what axioms $\Rb$
      must satisfy (see \cref{rmk:future.work.sdg}), and will instead rely on
      ``obvious'' facts about real numbers. For our current purposes, it should
      more than suffice to take $\Rb$ to be a non-trivial ring of characteristic $0$ and to
      define $U(1)$ to be the abelian group $\{z : \Cb \mid z
      \bar{z} = 1\}$ of units in the \emph{smooth} complex
      numbers, which are defined as $\Cb \equiv \Rb[i]/(i^2 + 1)$$\Rb/\Zb$. Note: the smooth reals
      will \emph{not} be the Dedekind reals.
     \item (Axiom $\Rb\flat$): Following \cite{Shulman:Real.Cohesion}, we use
       axiom $\Rb\flat$ with $\Rb$ the smooth reals. That is, for a crisp type
       $X$, the $\flat$ counit $(-)_{\flat} : \flat X \to X$ is an equivalence if and only if $X$
       is $\Rb$-local, i.e. the
       inclusion of constant maps $X \to (\Rb \to X)$ is an equivalence.
     \item (Form classifiers): We now come to the assumption which has a special differential
       geometric character. We will assume the existence of a \emph{contractible
         and infinitesimal resolution} of $U(1)$
         \begin{equation}\label{eqn.contractible.resolution}
           0 \to \flat U(1) \to U(1) \xto{d} \Lambda^1 \xto{d} \Lambda^2 \xto{d} \cdots
         \end{equation}
         given by the \emph{differential $k$-form classifiers} $\Lambda^k$. That is:
         \begin{itemize}
         \item The $\Lambda^k$ are crisp $\Rb$-vector spaces.
         \item The maps $d :: \Lambda^k \to \Lambda^{k+1}$ are crisply $\flat
           \Rb$-linear (not $\Rb$-linear!), and the sequence
           \cref{eqn.contractible.resolution} is crisply long exact.
         \item The $\Lambda^k$ are infinitesimal: $\flat \Lambda^k = \ast$. Therefore
           the closed $k$-form classifiers $\Lambda^k_{\text{cl}} :\equiv \ker(d : \Lambda^k \to \Lambda^{k +
             1})$ are also infinitesimal.
         \end{itemize}

         Externally, the form classifier $\Lambda^k$ will be the sheaf of
         $k$-forms. We will not assume anything in particular to make these
         $\Lambda^k$ actually classify forms (but see
         \cref{remk:future.work.sdg}); we will only assume what we need to
         construct ordinary differential cohomology. To say that $\Lambda^k$ is
         an $\Rb$-vector space means externally that the sheaf of $k$-forms is a
         $\Ca^{\infty}$-module, and the existence of this long exact sequence is
         essentially de Rham's theorem for $\Rb^n$.
    \end{itemize}
\end{assumption}

\begin{rmk}\label{rmk:future.work.sdg}
  For reasons of space, we will not justify this assumption in this paper. In forthcoming work, we will show how one can construct the form classifiers
  and their long exact sequence
\begin{equation}\label{eqn:long.exact.form.class}
0 \to \flat \Rb \to \Rb \xto{d} \Lambda^1 \xto{d} \Lambda^2 \to \cdots
\end{equation}

 \noindent from the axioms of synthetic diffential geometry
  with tiny infinitesimals and a principle of constancy. Synthetic differential
  geometry is an axiomatic system for working with nilpotent infinitesimals put
  forward first by Lawvere \cite{Lawvere:Categorical.Dynamics} and developed by Bunge, Dubuc, Kock, Wraith,
  and others. It admits a model in sheaves on infinitesimally extended
  Euclidean spaces, known as the Dubuc topos (or Cahiers topos)
  \cite{Dubuc:Topos}; for a review of models see
  \cite{Moerdijk-Reyes:Models.SDG}. The Dubuc topos is cohesive, and is our
  intended model for this section.

  It was noted by Lawvere \cite{Lawvere:SDG} that the exceptional projectivity enjoyed by the
  infinitesimal interval $\Db = \{\epsilon : \Rb \mid \epsilon^2 = 0\}$ was
  equivalent to the existence of an (external) right adjoint to the exponential
  functor $X \mapsto X^{\Db}$. We will follow Yetter
  \cite{Yetter:Tiny} in calling objects $T$ for which the functor
  $X \mapsto X^{T}$ admits a right adjoint \emph{tiny objects}. Lawvere and Kock showed how one could use this
  ``amazing'' right adjoint to construct the form classifiers $\Lambda^k$ (see
  Section I.20 of
  \cite{Kock:SDG} for a construction of $\Lambda^1$).

  However, working with the form classifiers was difficult in synthetic
  differential geometry since the adjoint which defines them only exists
  externally. This may be remedied by using Shulman's Cohesive HoTT, where the
  $\sharp$ modality allows for an internalization of the external. This allows
  us to give a fully internal theory of the form classifiers. We will, however,
  post-pone a discussion of this internal theory of tiny objects to future work.

  The principle of constancy says that if the differential of a function $f :
  \Rb \to \Rb$ vanishes uniformly, then $f$ is constant. This extra principle has
  been long considered in synthetic differential geometry (see, for example, the
  second chapter of \cite{Bunge-Gabo-SanLuis:Synthetic.Differential.Topology}), but when
  combined with real cohesion it implies the exactness of the sequence
  $$0 \to \flat \Rb \to \Rb \xto{d} \Lambda^1$$
  and so begins the theory of differential cohomology we will see shortly. The
  interaction with cohesion is non-trivial in many ways for synthetic
  differential geomtry. For example, the principle of constancy in the presence
  of real cohesion implies the existence of primitives, and the exponential
  functions $\exp(-) : \Rb \to \Rb^+$ and $\exp(2 \pi i -) : \Rb \to U(1)$
  (where $U(1) := \{z : \Cb \mid z\overline{z} = 1\}$). 
\end{rmk}

\begin{rmk}
  Externally, the smooth reals $\Rb$ correspond to the sheaf of smooth real valued
  functions, represented by the manifold $\Rb$, $U(1)$ corresponds to the sheaf of smooth $U(1)$-valued functions,
  and $\Lambda^k$ is the sheaf sending a manifold to its set of differential
  $k$-forms.
\end{rmk}

We have assumed the existence of a crisp long exact sequence of abelian groups
in which each of the $\Lambda^i$ are real vector spaces (but $d : \Lambda^i \to
\Lambda^{i + 1}$ are not $\Rb$-linear). These are to be the \emph{differential form
  classifiers}, which externally are the sheaves of $\Rb$-valued $n$-forms on
manifolds (or infinitesimally extended manifolds). 

If we define
$$\Lambda^n_{\text{cl}} :\equiv \ker(d : \Lambda^n \to \Lambda^{n + 1})$$
to be the closed $n$-form classifier, then we can reorganize the long exact sequence (\ref{eqn:long.exact.form.class})
into a series of short exact sequences of abelian groups
$$0 \to \Lambda^n_{\text{cl}} \to \Lambda^n \xto{d} \Lambda^{n+1}_{\text{cl}}
\to 0.$$
The first of these short exact sequences is
$$0 \to \flat \Rb \to \Rb \xto{d} \Lambda^1_{\text{cl}} \to 0$$
which we may extend into a long fiber sequence
\[
\begin{tikzcd}
{\flat \Rb} \ar[r, "(-)_{\flat}"] & \Rb \arrow[r, "d"] & \Lambda^1_{\text{cl}} \arrow[lld, out=-30, in=150] \\
\flat \B \Rb \arrow[r]    & \B \Rb     \arrow[r]    & \cdots
\end{tikzcd}
\]
This shows that $\Lambda^1_{\text{cl}}$ is the infinitesimal remainder of the
additive Lie group $\Rb$. Since $\Rb$ has contractible shape by definition, we
see that $\Rb \xto{d} \Lambda^1_{\text{cl}}$ is the universal $\infty$-cover of
$\Lambda^1_{\text{cl}}$. This gives us the following theorem, a form of de
Rham's theorem in smooth cohesion.
\begin{lem}\label{lem.form_classifier_contr}
The $n$-form classifiers $\Lambda^n$ are contractible.
\end{lem}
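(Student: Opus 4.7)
The plan is to exploit the $\Rb$-vector space structure on $\Lambda^n$ from Assumption \ref{assumption:form.classifiers}, together with the fact (used just before the statement) that $\Rb$ itself has contractible shape. Intuitively, scalar multiplication gives a null-homotopy of $\id_{\Lambda^n}$ parameterised by $\Rb$, and once the parameter is collapsed by $\shape$, this forces $\id_{\shape \Lambda^n}$ to coincide with a constant map.

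Concretely, I would introduce the scalar multiplication $H : \Lambda^n \times \Rb \to \Lambda^n$, $(\omega, t) \mapsto t\omega$, which satisfies $H(\omega, 0) = 0$ and $H(\omega, 1) = \omega$. Since $\shape$ preserves finite products in any cohesive setting, applying $\shape$ produces
\[
  \shape H : \shape \Lambda^n \times \shape \Rb \to \shape \Lambda^n,
\]
and the contractibility $\shape \Rb \simeq \ast$ then turns this into the data of a single endomap $f : \shape \Lambda^n \to \shape \Lambda^n$. Factorising $\shape H$ through $t = 0$ and through $t = 1$ identifies $f$ with the constant map at $0^{\shape}$ and with $\id_{\shape \Lambda^n}$ respectively, so $\id_{\shape \Lambda^n}$ is coherently homotopic to a constant and therefore $\shape \Lambda^n \simeq \ast$.

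The main subtlety is verifying that the $t = 0$ and $t = 1$ slices of $\shape H$ coincide in a fully coherent way — not merely up to $\pi_0$. This reduces to $\shape$ commuting with the binary product $\Lambda^n \times \Rb$, which is precisely the finite-product preservation built into the cohesive setting, and it is the only nontrivial ingredient beyond the linear structure of $\Lambda^n$. Equivalently, under real cohesion (where $\shape$ is realised as $\Rb$-localization) a conceptually cleaner way to finish is to observe that any type carrying a contracting $\Rb$-action is trivialised after $\Rb$-localization. Either way, no differential-geometric input beyond the $\Rb$-linearity of $\Lambda^n$ is needed, and the argument does not even use the long exact sequence \eqref{eqn:long.exact.form.class}.
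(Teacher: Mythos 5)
Your argument is correct and matches the paper's in its essential idea: exploit the $\Rb$-linear structure to get a contracting homotopy $\gamma(t) = t\omega$ and observe that this kills $\shape\Lambda^n$ because $\shape$ collapses $\Rb$. The paper simply cites Lemma~6.9 of \cite{Jaz:Good.Fibrations} as a black box (it says exactly that a type is $\shape$-contractible as soon as every point admits an $\Rb$-parameterized path to a fixed basepoint), whereas you have unrolled that lemma's proof via $\shape$ preserving finite products together with $\shape\Rb = \ast$. One small calibration: product preservation is not something special to the cohesive setting --- every modality in HoTT preserves finite products (this is in Rijke--Shulman--Spitters) --- so the only cohesion-specific input is $\shape\Rb = \ast$, which the paper takes as definitional. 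Your closing remark, that any type with a contracting $\Rb$-action is trivialised by $\Rb$-localization, is precisely the content of the cited lemma.
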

\begin{proof}
This follows immediately from the assumption that they are real vector spaces.
By Lemma 6.9 of \cite{Jaz:Good.Fibrations}, to show that $\Lambda^n$ is contractible
it suffices to give for every $\omega : \Lambda^n$ a path $\gamma : \Rb \to
\Lambda^n$ from $\omega$ to $0$. We can of course define
$$\gamma(t) :\equiv t\omega$$
which gives our desired contraction.
\end{proof}

\begin{thm}\label{thm:form.classifiers.contractible}
Let $\Lambda^n_{\text{cl}} :\equiv \ker(d : \Lambda^n \to \Lambda^{n + 1})$ be
the closed $n$-form classifier. Then
$$\shape \Lambda^n_{\text{cl}} = \flat \B^n \Rb.$$
\end{thm}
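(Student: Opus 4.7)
The plan is to prove this by induction on $n$, exploiting the short exact sequences
\begin{equation*}
0 \to \Lambda^{n-1}_{\text{cl}} \to \Lambda^{n-1} \to \Lambda^{n}_{\text{cl}} \to 0
\end{equation*}
that arise from breaking up the long exact sequence \cref{eqn:long.exact.form.class}, using the conventions $\Lambda^0 := \Rb$ and $\Lambda^0_{\text{cl}} := \flat\Rb$ (so that the $n=1$ instance is precisely the defining sequence for $\flat\Rb$). The base case $n=0$ is trivial: $\shape \flat \Rb = \flat \Rb = \flat \B^0 \Rb$ because $\flat$-modal types are discrete.

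For the inductive step, delooping the short exact sequence and extending yields a fiber sequence
\begin{equation*}
\Lambda^{n-1} \to \Lambda^n_{\text{cl}} \to \B \Lambda^{n-1}_{\text{cl}}.
\end{equation*}
Since $\B \Lambda^{n-1}_{\text{cl}}$ is crisply $0$-connected, \cref{lem:classifying.map.fibration} tells us that the connecting map is a $\shape$-fibration, so $\shape$ preserves this fiber sequence. The fiber $\shape \Lambda^{n-1}$ is contractible by \cref{lem.form_classifier_contr} (with $\shape\Rb = \ast$ handled by the same real-vector-space argument), so the induced map $\shape \Lambda^n_{\text{cl}} \to \shape \B \Lambda^{n-1}_{\text{cl}}$ has contractible fiber over the basepoint. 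Because $\shape$ preserves crisp $0$-connectedness (Theorem 8.6 of \cite{Jaz:Good.Fibrations}), the target is still $0$-connected, so all fibers are contractible and the map is an equivalence.

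By the same theorem, $\shape \B \Lambda^{n-1}_{\text{cl}}$ deloops $\shape \Lambda^{n-1}_{\text{cl}}$; combining this with the inductive hypothesis $\shape \Lambda^{n-1}_{\text{cl}} = \flat \B^{n-1} \Rb$ and \cref{lem:flat.preserves.groups} to commute $\flat$ past the extra $\B$ yields
\begin{equation*}
\shape \Lambda^n_{\text{cl}} \;=\; \shape \B \Lambda^{n-1}_{\text{cl}} \;=\; \B \flat \B^{n-1} \Rb \;=\; \flat \B^n \Rb,
\end{equation*}
closing the induction.

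The main technical obstacle is justifying that $\shape$ actually preserves the relevant fiber sequence; this is precisely what \cref{lem:classifying.map.fibration} buys us, since every target in sight is a $0$-connected delooping of a crisp higher group. Once this is in hand, the argument is a clean unwinding of one $\B$ per inductive step. As a sanity check, the $n=1$ case can equivalently be obtained by a direct application of \cref{thm.main_theorem} with $G = \Rb$: the sequence $0 \to \flat\Rb \to \Rb \to \Lambda^1_{\text{cl}} \to 0$ exhibits $\Lambda^1_{\text{cl}}$ as the infinitesimal remainder of $\Rb$, and since $\shape\Rb = \ast$ forces $\inftycover{\Rb} = \Rb$, the main theorem immediately gives $\shape \Lambda^1_{\text{cl}} = \flat \B \Rb$.
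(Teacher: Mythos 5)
Your proof is correct, but it takes a genuinely different route from the paper's, and the difference is worth noting. The paper applies $\shape$ to the fiber sequence $\Lambda^n_{\text{cl}} \to \Lambda^n \xto{d} \Lambda^{n+1}_{\text{cl}}$ directly, using the "good fibrations" trick on the group homomorphism $d$ to see it is a $\shape$-fibration. Contractibility of $\shape \Lambda^n$ then gives $\Omega \shape \Lambda^{n+1}_{\text{cl}} = \shape \Lambda^n_{\text{cl}}$, but to close the induction the paper must then establish \emph{separately} that $\shape \Lambda^{n+1}_{\text{cl}}$ is $n$-connected — and that argument (picking a point $\omega$ via surjectivity of the $\shape$-unit, contracting $\shape \Lambda^n$ onto $\alpha^{\shape}$, etc.) is the bulk of the paper's proof. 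Your rotation of the fiber sequence one step forward, to $\Lambda^{n-1} \to \Lambda^n_{\text{cl}} \to \B\Lambda^{n-1}_{\text{cl}}$, is the move that makes this connectedness argument disappear: you invoke \cref{lem:classifying.map.fibration} (rather than the good-fibrations trick) to get the $\shape$-fibration, and Theorem 8.6 of \cite{Jaz:Good.Fibrations} then hands you the delooping identity $\shape \B \Lambda^{n-1}_{\text{cl}} = \B\, \shape \Lambda^{n-1}_{\text{cl}}$ together with its $0$-connectedness for free, so the map $\shape\Lambda^n_{\text{cl}} \to \shape \B \Lambda^{n-1}_{\text{cl}}$ is an equivalence by a two-line argument. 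This is a cleaner packaging of the same ingredients; the paper's proof trades your appeal to delooping-commutation for a more hands-on connectedness computation. One caveat applies equally to both proofs: the final step reads the pointed equivalence $\shape \Lambda^{n-1}_{\text{cl}} = \flat\B^{n-1}\Rb$ as an equivalence of deloopable data (so that the deloopings can be identified); this is a small genre-standard gap that would be discharged cleanly by carrying the whole delooping sequence through the induction, but it is not a defect specific to your version.
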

\begin{proof}
Since $\Lambda^1_{\text{cl}}$ is the infinitesimal remainder of $\Rb$, this
follows from \cref{thm.main_theorem}:
$$\shape \Lambda^1_{\text{cl}} = \flat \B \Rb.$$
We then proceed by induction. We have a short exact sequence of abelian groups
$$0 \to \Lambda^n_{\text{cl}} \to \Lambda^n \xto{d} \Lambda^{n+1}_{\text{cl}} \to 0.$$
We note that since $d : \Lambda^n \to  \Lambda^{n+1}_{\text{cl}}$ is an abelian
group homomorphism, all of its fibers are identifiable with the crisp type
$\Lambda^n_{\text{cl}}$ and therefore, by the ``good fibrations'' trick (Theorem
6.1 of \cite{Jaz:Good.Fibrations}), it is a $\shape$-fibration. Therefore, we get a
fiber sequence
$$\shape \Lambda^n_{\text{cl}} \to \shape \Lambda^n \to \shape \Lambda^{n+1}_{\text{cl}}.$$
Now, since $\Lambda^n$ is contractible by \cref{lem.form_classifier_contr}, we
see that
$$\Omega \shape \Lambda^{n+1}_{\text{cl}} = \shape \Lambda^n_{\text{cl}}$$
By inductive hypothesis, $\shape \Lambda^n_{\text{cl}} = \flat B^n \Rb$, so all
that remains is to show that $\shape \Lambda^{n+1}_{\text{cl}}$ is
$n$-connected. We will do this by showing that for any $u : \shape
\Lambda^{n+1}_{\text{cl}}$, the loop space $\Omega(\shape
\lambda^{n+1}_{\text{cl}}, u)$ is $(n-1)$-connected. By Corollary 9.12 of \cite{Shulman:Real.Cohesion}, the $\shape$-unit
$(-)^{\shape} : \Lambda^{n+1}_{\text{cl}} \to \shape \Lambda^{n+1}_{\text{cl}}$
is surjective, so there exists an $\omega : \Lambda^{n+1}_{\text{cl}}$ with $u =
\omega^{\shape}$. We then have a fiber sequence

$$\fib_{d}(\omega) \to \Lambda^{n} \xto{d} \Lambda^{n+1}_{\text{cl}}$$
which, since $\Lambda^{n}\xto{d}\Lambda^{n + 1}$ is a $\shape$-fibration descends to a fiber sequence
\begin{equation}\label{eqn.fiber_seq}
\shape\fib_{d}(\omega) \to \shape \Lambda^{n} \xto{\shape} \shape \Lambda^{n +
1}_{\text{cl}}.
\end{equation}
Since $d$ is surjective, there is a $\alpha : \Lambda^n$ with $d\alpha =
\omega$, and we may therefore contract $\shape \Lambda^{n}$onto
$\alpha^{\shape}$. This lets us equate the sequence (\ref{eqn.fiber_seq})
with the sequence
$$\Omega(\shape\Lambda^{n+1}_{\text{cl}}, \omega^{\shape}) \to \ast
\xto{\omega^{\shape}} \shape\Lambda^{n+1}_{\text{cl}}.$$
But as $d : \Lambda^n \to \Lambda^{n+1}_{\text{cl}}$ is an abelian group
homomophism, its fibers are all identifiable with its kernel
$\Lambda^{n}_{\text{cl}}$; this means that $\Omega(\shape
\Lambda^{n+1}_{\text{cl}}, u)$ is identifiable with $\shape
\Lambda^n_{\text{cl}}$, which by inductive hypothesis is $(n-1)$-connected.
\end{proof}

We may understand this theorem as a form of de Rham theorem in smooth cohesive
homotopy type theory. We may think of the unit $(-)^{\shape} :
\Lambda^n_{\text{cl}} \to \flat B^n \Rb$ as giving the
de Rham class of a closed $n$-form. That this map is the $\shape$-unit says that
this is the universal discrete cohomological invariant of closed $n$-forms.
Explicitly, if $E_{\bullet}$ is a loop spectrum, then
$H^k(\Lambda^n_{\text{cl}}; E_{\bullet}) :\equiv \trunc{\Lambda^n_{\text{cl}}
  \to E_k}_0$. Therefore, if the $E_k$ are discrete, then any cohomology class
$c : \Lambda^n_{\text{cl}} \to E_k$ factors through the de Rham class
$(-)^{\shape} : \Lambda^n_{\text{cl}} \to \flat \B^n \Rb$. In this sense, every
discrete cohomological invariant of closed $n$-forms is in fact an invariant of
their de Rham class in discrete real cohomology.

\subsection{Circle $k$-Gerbes with Connection}

We can now go about defining ordinary differential cohomology. We understand
ordinary differential cohomology as equipping integral cohomology with
differential form data. Hopkins and Singer define (Definition 2.4 of
\cite{Hopkins-Singer:Differential.Cocycles}) a \emph{differential cocycle} of degree $k + 1$
on $X$
to be a triple $(c, h, \omega)$ consisting of an underlying cocycle $c \in
Z^{k+1}(X, \Zb)$ in integral cohomology, a \emph{curvature} form $\omega \in \Lambda^{k+1}_{\text{cl}}(X)$, and a \emph{monodromy} term $h \in C^{k}(X,
\Rb)$ satisfying the equation $dh = \omega - c$.

We will follow their lead, at least in spirit. In true homotopy type theoretic
fashion, we will define the classifying types first and then derive the
cohomology theory by truncation.
\begin{defn}\label{def:classifier.connections}
We define the classifier $\B^k_{\nabla}U(1)$ of degree $(k + 1)$ classes in
ordinary differential cohomology to be the pullback:
\[\begin{tikzcd}
	{\B^k_{\nabla}U(1)} & {\Lambda^{k+1}_{\text{cl}}} \\
	{\B^{k+1} \Zb} & {\flat\B^{k+1}\Rb}
	\arrow[from=1-1, to=1-2, "F_{-}"]
	\arrow[from=1-1, to=2-1]
	\arrow[from=2-1, to=2-2]
	\arrow[from=1-2, to=2-2, "(-)^{\shape}"]
	\arrow["\lrcorner"{anchor=center, pos=0.125}, draw=none, from=1-1, to=2-2]
\end{tikzcd}\]
\end{defn}

Therefore, a cocycle $\tilde{c} : X \to \B^k_{\nabla}U(1)$ in differential cohomology
will consist of an underlying cocycle $c : X \to \B^{k+1}\Zb$, a curvature form $\omega : X
\to \Lambda^{k+1}_{\text{cl}}$, together with an identification $h : c = \omega$ in
$X \to \flat \B^{k+1}\Rb$. Since $h$ lands in types identifiable with $\Omega
\flat \B^{k+1} \Rb$, which equals $ \flat \B^k \Rb$, we may consider it as the monodromy term in
discrete real cohomology. We will now set about justifying this terminology.

We may note immediately from this definition that the map $\B^k_{\nabla} U(1)
\to \B^{k+1}\Zb$ (which we may think of as taking the underlying class in
ordinary cohomology) is the $\shape$-unit. This means that the underlying
cocycle is the universal discrete cohomological invariant of a differential cocycle.
\begin{lem}\label{lem:diff.coh.shape.naturality}
The pullback square 
\[\begin{tikzcd}
	{\B^k_{\nabla}U(1)} & {\Lambda^{k+1}_{\text{cl}}} \\
	{\B^{k+1} \Zb} & {\flat\B^{k+1}\Rb}
	\arrow[from=1-1, to=1-2]
	\arrow[from=1-1, to=2-1]
	\arrow[from=2-1, to=2-2]
	\arrow[from=1-2, to=2-2, "(-)^{\shape}"]
	\arrow["\lrcorner"{anchor=center, pos=0.125}, draw=none, from=1-1, to=2-2]
\end{tikzcd}\]
is a $\shape$-naturality square. That is, $\B^k_{\nabla} U(1) \to \B^{k+1}\Zb$
is a $\shape$-unit.
\end{lem}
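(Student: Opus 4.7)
The plan is to verify the two characterizing properties of a $\shape$-unit for the map $p : \B^k_{\nabla}U(1) \to \B^{k+1}\Zb$: namely, that its codomain $\B^{k+1}\Zb$ is $\shape$-modal, and that $\shape p$ is an equivalence. For the first, since $\Zb$ is crisply discrete and the shape commutes with deloopings of crisp higher groups (by Theorem 8.6 of \cite{Jaz:Good.Fibrations}, as recalled before \cref{prop:top.sequence}), one has $\shape \B^{k+1}\Zb = \B^{k+1}\shape\Zb = \B^{k+1}\Zb$. By \cref{lem:flat.preserves.groups}, $\flat\B^{k+1}\Rb = \B^{k+1}(\flat\Rb)$ is likewise discrete, so the bottom edge of the square is a well-formed map between discrete types.

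For the second property, I would read off the fibers of $p$ from the pullback: for $c : \B^{k+1}\Zb$ with image $\bar c : \flat\B^{k+1}\Rb$, we have
$$\fib_p(c) \simeq \fib_{(-)^{\shape}}(\bar c),$$
where $(-)^{\shape} : \Lambda^{k+1}_{\text{cl}} \to \flat\B^{k+1}\Rb$ is the $\shape$-unit established in \cref{thm:form.classifiers.contractible}. Hence each fiber of $p$ is $\shape$-contractible.

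To pass from $\shape$-contractibility of fibers to the conclusion that $\shape p$ is an equivalence, I would invoke \cref{lem:classifying.map.fibration}: the map $p$ is a crisp map into the crisp $0$-connected type $\B^{k+1}\Zb$, and is therefore a $\shape$-fibration. Consequently $\shape$ preserves its fibers, so $\shape p : \shape\B^k_{\nabla}U(1) \to \shape\B^{k+1}\Zb = \B^{k+1}\Zb$ has contractible fibers and is thus an equivalence. Since $\B^{k+1}\Zb$ is discrete, any map $\B^k_{\nabla}U(1) \to \B^{k+1}\Zb$ factors uniquely through the $\shape$-unit of its domain; this forces $p$ itself to be (equivalent to) the $\shape$-unit of $\B^k_{\nabla}U(1)$. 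The main technical hurdle is ensuring that $\shape$ may be exchanged with the fibers of $p$ --- this is precisely what \cref{lem:classifying.map.fibration} secures, using crucially that $\B^{k+1}\Zb$ is crisp and $0$-connected.
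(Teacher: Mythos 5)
Your proposal is correct, but it takes a longer route than the paper's. The common core is the observation that the pullback square identifies the fibers of $p : \B^k_{\nabla}U(1) \to \B^{k+1}\Zb$ with the fibers of the $\shape$-unit $(-)^{\shape} : \Lambda^{k+1}_{\text{cl}} \to \flat\B^{k+1}\Rb$, which are $\shape$-connected. From there, the paper simply appeals to the general fact from the theory of modalities (\cite{Rijke-Shulman-Spitters:Modalities}) that a map into a $\shape$-modal type is a $\shape$-unit if and only if it is $\shape$-connected, i.e., has $\shape$-connected fibers; this is an immediate consequence of the orthogonal factorization system associated to $\shape$. You instead invoke \cref{lem:classifying.map.fibration} to conclude that $p$ is a $\shape$-fibration, use that $\shape$ preserves fibers of $\shape$-fibrations to show $\shape p$ is an equivalence, and then argue by the universal property of the $\shape$-unit that $p$ is forced to \emph{be} that unit. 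This works, but the detour through $\shape$-fibrations is unnecessary here: you need $\shape$-fibrations when the codomain is \emph{not} modal (so that $\shape$ applied to a fiber sequence remains a fiber sequence), but when the codomain is already modal the factorization-system argument gives the result directly. Your last paragraph is also slightly mislocated in its emphasis: the factoring of $p$ through its own $\shape$-unit, which you present as a separate closing step, is precisely what the "map to a modal type is a unit iff $\shape$-connected" principle packages, so you end up re-deriving part of that standard fact.

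Also note a small simplification: you do not actually need the discreteness of $\flat\B^{k+1}\Rb$ for the conclusion; only the discreteness of $\B^{k+1}\Zb$ (the codomain of $p$) matters. That said, this does no harm.
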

\begin{proof}
Since $\B^{k}_{\nabla}U(1) \to \B^{k+1}\Zb$ is a map into a $\shape$-modal type,
to show that it is a $\shape$ unit it suffices to show that it is
$\shape$-connected. Since we have a pullback square, the fibers of
$\B^k_{\nabla}U(1) \to \B^{k+1}\Zb$ are the same as those of
$(-)^{\shape} : \Lambda^{k+1}_{\text{cl}} \to \flat\B^{k+1}\Rb$. But as this map is a
$\shape$-unit, its fibers are $\shape$-connected.
\end{proof}

The reason for our change of index --- defining $\B^k_{\nabla} U(1)$ to represent
degree $(k+1)$ classes --- is because we would like to think of
$\B^k_{\nabla}U(1)$ as more directly classifying connections on $k$-gerbes with
band $U(1)$. To reify this idea, let's give the map $\B^k_{\nabla} U(1) \to
\B^k U(1)$ which we think of as taking the underlying $k$-gerbe.

\begin{con}
We construct a map $\B^k_{\nabla} U(1) \to \B^kU(1)$ which makes the following
triangle commute:
\[\begin{tikzcd}
	{\B^k_{\nabla}U(1)} && {\B^k U(1)} \\
	& {\B^{k+1}\Zb}
	\arrow[from=1-1, to=1-3]
	\arrow[from=1-1, to=2-2, "{(-)^{\shape}}"']
	\arrow["{(-)^{\shape}}", from=1-3, to=2-2]
\end{tikzcd}\]
\end{con}
\begin{proof}[Construction]
 Since $\Rb \to U(1)$ is the universal $\infty$-cover of $U(1)$, by
 \cref{cor:inf_remainder_infty_cover}, $U(1)$ has the same infinitesimal
 remainder as $\Rb$, which is $\Lambda^1_{\text{cl}}$.
  Therefore, by modal fracture \cref{thm.main_theorem}, we have a pullback square
  \[\begin{tikzcd}
	{\B^kU(1)} & {\B^k \Lambda^1_{\text{cl}}} \\
	{\B^{k+1}\Zb} & {\flat \B^{k+1}\Rb}
	\arrow["{(-)^{\shape}}"', from=1-1, to=2-1]
	\arrow[from=1-1, to=1-2]
	\arrow[from=1-2, to=2-2]
	\arrow[from=2-1, to=2-2]
	\arrow["\lrcorner"{anchor=center, pos=0.125}, draw=none, from=1-1, to=2-2]
\end{tikzcd}\]

Now, since we have a series of short exact sequences
$$0 \to \Lambda^n_{\text{cl}} \to \Lambda^{n} \xto{d}
\Lambda^{n+1}_{\text{cl}} \to 0$$
we have long fiber sequences
\[
\begin{tikzcd}
\Lambda^n_{\text{cl}} \ar[r] & \Lambda^n \arrow[r, "d"] & \Lambda^{n + 1}_{\text{cl}} \arrow[lld, out=-30, in=150] \\
\B \Lambda^n_{\text{cl}} \arrow[r]    & \B \Lambda^n     \arrow[r]    & \cdots
\end{tikzcd}
\]
for each $n$. In particular, we have maps $\B^{n}\Lambda^{m+1}_{\text{cl}} \to
\B^{n+1}\Lambda^{m}_{\text{cl}}$ for all $n$ and $m$. Taking repeated pullbacks
along these maps gives us a diagram
\begin{equation}\label{eqn.big_diff_coh_diagram}
\begin{tikzcd}
	{\B^k_{\nabla}U(1)} & {\Lambda^{k+1}_{\text{cl}}} \\
	\bullet & {\B \Lambda^{k}_{\text{cl}}} \\
	\vdots & \vdots \\
	\bullet & {\B^{k-1}\Lambda^2_{\text{cl}}} \\
	{\B^kU(1)} & {\B^k \Lambda^1_{\text{cl}}} \\
	{\B^{k+1}\Zb} & {\flat \B^{k+1}\Rb}
	\arrow[from=5-1, to=6-1]
	\arrow[from=5-1, to=5-2]
	\arrow[from=5-2, to=6-2]
	\arrow[from=6-1, to=6-2]
	\arrow["\lrcorner"{anchor=center, pos=0.125}, draw=none, from=5-1, to=6-2]
	\arrow[from=4-1, to=5-1]
	\arrow[from=4-1, to=4-2]
	\arrow[from=4-2, to=5-2]
	\arrow["\lrcorner"{anchor=center, pos=0.125}, draw=none, from=4-1, to=5-2]
	\arrow[from=1-1, to=2-1]
	\arrow[from=1-1, to=1-2]
	\arrow[from=1-2, to=2-2]
	\arrow[from=2-1, to=2-2]
	\arrow["\lrcorner"{anchor=center, pos=0.125}, draw=none, from=1-1, to=2-2]
	\arrow[from=2-2, to=3-2]
	\arrow[from=3-2, to=4-2]
	\arrow[from=2-1, to=3-1]
	\arrow[shift right=5,bend right = 30, dashed, from=1-1, to=5-1]
	\arrow[from=3-1, to=4-1]
\end{tikzcd}
\end{equation}
The dashed composite in this diagram is what we were seeking to construct.
\end{proof}

\begin{rmk}
  Diagram \ref{eqn.big_diff_coh_diagram} shows us that the following square is a
  pullback:
\[\begin{tikzcd}
	{\B^k_{\nabla}U(1)} & {\Lambda^{k+1}_{\text{cl}}} \\
	{\B^{k} U(1)} & {\B^{k} \Lambda^1_{\text{cl}}}
	\arrow[from=1-1, to=1-2]
	\arrow[from=1-1, to=2-1]
	\arrow[from=2-1, to=2-2]
	\arrow[from=1-2, to=2-2, "(-)^{\shape}"]
	\arrow["\lrcorner"{anchor=center, pos=0.125}, draw=none, from=1-1, to=2-2]
\end{tikzcd}\]
If we note that $\B^{k} \Lambda^1_{\text{cl}}$ is $\B^k \lie{u}(1)$, we get an
alternate definition of $\B^k_{\nabla}U(1)$ by this pullback. This shows that
our definition agrees with Schreiber's Definition 4.4.93 in \cite{Schreiber:Differential.Cohomology}.
\end{rmk}

We can now see that the map $\B^k_{\nabla}U(1) \to \Lambda^{k+1}_{\text{cl}}$
takes the
the \emph{curvature $(k+1)$-form}. We can justify this by showing that the fiber
of this map is $\flat\B^k U(1)$; in other words, a circle $k$-gerbe with
connection is flat if and only if its curvature vanishes.
\begin{lem}\label{lem.curvature}
The map $F_{(-)} : \B^k_{\nabla}U(1) \to \Lambda^{k+1}_{\text{cl}}$ has fiber $\flat \B^k
U(1)$. Since this map gives an obstruction to flatness, we refer to it as the
\emph{curvature} $(k+1)$-form.
\end{lem}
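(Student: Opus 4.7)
The plan is to exploit the pullback square defining $\B^k_{\nabla}U(1)$ to reduce the computation of the fiber of $F_{(-)}$ to the fiber of the bottom map $\B^{k+1}\Zb \to \flat\B^{k+1}\Rb$, which I can identify using a short exact sequence of abelian groups. Since the square
\[
\begin{tikzcd}
\B^k_{\nabla}U(1) \ar[r, "F_{-}"] \ar[d] & \Lambda^{k+1}_{\text{cl}} \ar[d, "(-)^{\shape}"] \\
\B^{k+1}\Zb \ar[r] & \flat \B^{k+1}\Rb
\end{tikzcd}
\]
is a pullback, the fibers of $F_{(-)}$ are canonically identified with the fibers of the bottom map $\B^{k+1}\Zb \to \flat\B^{k+1}\Rb$ along $(-)^{\shape} : \Lambda^{k+1}_{\text{cl}} \to \flat\B^{k+1}\Rb$. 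In particular, the fiber of $F_{(-)}$ over $0$ coincides with the fiber of $\B^{k+1}\Zb \to \flat\B^{k+1}\Rb$ over the basepoint.

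Next I would identify this bottom map as the $(k+1)$-fold delooping of the inclusion $\Zb \hookrightarrow \flat\Rb$ (valid since $\flat\Zb = \Zb$, so $\flat\B^{k+1}\Zb = \B^{k+1}\Zb$). Starting from the crisp short exact sequence $0 \to \Zb \to \Rb \to U(1) \to 0$ and applying \cref{lem:flat.preseres.exact.sequences}, I obtain the short exact sequence
$$0 \to \Zb \to \flat \Rb \to \flat U(1) \to 0.$$
Delooping this $k+1$ times (using \cref{lem:flat.preserves.groups} to commute $\flat$ with $\B$) yields the fiber sequence
$$\flat \B^k U(1) \to \B^{k+1}\Zb \to \flat \B^{k+1}\Rb.$$
Combined with the fiber identification from the pullback, this gives $\fib_{F_{(-)}}(0) = \flat \B^k U(1)$, as desired.

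The main technical point is ensuring that the bottom map of the defining pullback really is the delooping of the change-of-coefficients $\Zb \to \flat\Rb$, rather than some other map between these types. This should be immediate from how the pullback is set up --- the map $\B^{k+1}\Zb \to \flat\B^{k+1}\Rb$ represents the inclusion of the underlying integral cohomology into discrete real cohomology --- but it is worth spelling out carefully, since $\flat\B^{k+1}\Rb$ can be presented either as $\flat$ of a delooping or as a delooping of $\flat\Rb$, and one needs the canonical identification between these to match up the two fiber sequences.
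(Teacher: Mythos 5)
Your proof is correct, but it routes through a different part of the structure than the paper's. The paper pulls off the top portion of the tower of pullbacks (Diagram \ref{eqn.big_diff_coh_diagram}), obtaining the composite pullback
\[
\begin{tikzcd}
{\B^k_{\nabla}U(1)} \ar[r, "F_{(-)}"] \ar[d] & {\Lambda^{k+1}_{\text{cl}}} \ar[d] \\
{\B^kU(1)} \ar[r, "\B^k\theta"'] & {\B^{k}\Lambda^1_{\text{cl}}}
\end{tikzcd}
\]
and identifies the fiber of $F_{(-)}$ with the fiber of $\B^k\theta$, which is $\flat\B^kU(1)$ \emph{by definition} of the infinitesimal remainder fiber sequence $\flat\B^kU(1) \to \B^kU(1) \to \B^k\Lambda^1_{\text{cl}}$. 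You instead stay with the defining pullback over $\flat\B^{k+1}\Rb$ and compute the fiber of the bottom map $\B^{k+1}\Zb \to \flat\B^{k+1}\Rb$ by applying $\flat$ to the universal-cover short exact sequence $0 \to \Zb \to \Rb \to U(1) \to 0$ and delooping. Both compute the horizontal fibers of a pullback square; the difference is which horizontal map you identify the fiber of. Your route is more self-contained in that it doesn't require the intermediate tower, but the paper's route makes the fiber identification essentially immediate from the definition of $\theta$.

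On the subtlety you flag: your identification of the bottom map as the $(k+1)$-fold delooping of $\Zb \hookrightarrow \flat\Rb$ is the right reading of Definition \ref{def:classifier.connections}, but it is worth noting there is an even quicker way to see the fiber without unwinding the deloopings of $\flat$-ed SES. In the big diagram the bottom map is the $\shape$-image $\shape\B^k\theta$ of the infinitesimal remainder; the bottom sequence of the modal fracture hexagon (\cref{thm.main_theorem}) applied to $\B^kU(1)$ then directly gives the fiber sequence $\flat\B^kU(1) \to \B^{k+1}\Zb \to \flat\B^{k+1}\Rb$, with no need to invoke \cref{lem:flat.preseres.exact.sequences} or track the identification $\B^{k+1}\flat\Rb = \flat\B^{k+1}\Rb$ through \cref{lem:flat.preserves.groups}. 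Your approach works, but leans on re-deriving something the main theorem already packages.
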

\begin{proof}
By considering the top part from the diagram (\ref{eqn.big_diff_coh_diagram}), we
find a pullback square
\[\begin{tikzcd}
	{\B^k_{\nabla}U(1)} & {\Lambda^{k+1}_{\text{cl}}} \\
	{\B^kU(1)} & {\B^{k}\Lambda^1_{\text{cl}}} \\
	{}
	\arrow[from=1-1, to=1-2]
	\arrow[from=1-2, to=2-2]
	\arrow[from=1-1, to=2-1]
	\arrow[from=2-1, to=2-2]
	\arrow["\lrcorner"{anchor=center, pos=0.125}, draw=none, from=1-1, to=2-2]
\end{tikzcd}\]
For this reason, we get an equivalence on fibers:
\[\begin{tikzcd}
	\bullet & {\B^k_{\nabla}U(1)} & {\Lambda^{k+1}_{\text{cl}}} \\
	{\flat\B^kU(1)} & {\B^kU(1)} & {\B^{k}\Lambda^1_{\text{cl}}} \\
	& {}
	\arrow[from=1-2, to=1-3]
	\arrow[from=1-3, to=2-3]
	\arrow[from=1-2, to=2-2]
	\arrow[from=2-2, to=2-3]
	\arrow["\lrcorner"{anchor=center, pos=0.125}, draw=none, from=1-2, to=2-3]
	\arrow[from=2-1, to=2-2]
	\arrow["{\rotatebox[origin=c]{90}{$\sim$}}"', from=1-1, to=2-1]
	\arrow[from=1-1, to=1-2] 
\end{tikzcd}\qedhere \]
\end{proof}

As a corollary, we may characterize the curvature $F_{(-)} : \B^k_{\nabla}U(1) \to
\Lambda^{k+1}_{\text{cl}}$ modally.
\begin{cor}
The curvature $F_{(-)} : \B^k_{\nabla}U(1) \to
\Lambda^{k+1}_{\text{cl}}$ is a unit for the $(k-1)$-truncation modality. In
particular,
$$\trunc{\B^k_{\nabla}U(1)}_j = \Lambda^{k+1}_{\text{cl}}$$
for any $0 \leq j < k$.
\end{cor}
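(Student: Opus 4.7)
The plan is to use the standard characterization of truncation units: a map $f : A \to B$ is a $\trunc{-}_n$-unit exactly when $B$ is $n$-truncated and $f$ has $n$-connected fibers. So I need to verify that (i) $\Lambda^{k+1}_{\text{cl}}$ is $(k-1)$-truncated and (ii) the fibers of $F_{(-)}$ are $(k-1)$-connected.

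For (ii), \cref{lem.curvature} already identifies each fiber of $F_{(-)}$ with $\flat \B^k U(1)$. Now $U(1)$ is a $0$-type, so $\B^k U(1)$ is $(k-1)$-connected (it is a $k$-fold connected delooping of a set). Since $\B^k U(1)$ is crisp --- coming from the crisp group $U(1)$ of the smooth reals --- and $\flat$ commutes with truncation on crisp types in the same way that $\trunc{\flat X}_0 = \flat \trunc{X}_0$ in \cite[Corollary.~6.7]{Shulman:Real.Cohesion}, we conclude that $\trunc{\flat \B^k U(1)}_{k-1} = \flat \trunc{\B^k U(1)}_{k-1} = \ast$. Hence $\flat \B^k U(1)$ is $(k-1)$-connected.

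For (i), the form classifier $\Lambda^{k+1}$ is an $\Rb$-vector space by \cref{assumption:form.classifiers} and in particular a set; its subtype $\Lambda^{k+1}_{\text{cl}} := \ker(d)$ is therefore also a set, and hence $(k-1)$-truncated for every $k \geq 1$. Combining (i) and (ii), the map $F_{(-)}$ is a $\trunc{-}_{k-1}$-unit, as claimed.

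The ``in particular'' statement then follows by the same argument run with $j$ in place of $k-1$: for any $0 \leq j < k$ the fiber $\flat \B^k U(1)$, being $(k-1)$-connected, is a fortiori $j$-connected, while $\Lambda^{k+1}_{\text{cl}}$, being a set, is $j$-truncated. Thus $F_{(-)}$ is also a $\trunc{-}_j$-unit, giving $\trunc{\B^k_{\nabla}U(1)}_j = \Lambda^{k+1}_{\text{cl}}$. The only genuine subtlety is the preservation of connectivity by $\flat$, which crucially requires the crispness of $\B^k U(1)$; everything else is a routine application of the fiber computation from \cref{lem.curvature}.
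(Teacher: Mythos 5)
Your proof is correct and follows the same approach as the paper: reduce to showing $\Lambda^{k+1}_{\text{cl}}$ is $(k-1)$-truncated (it is a set) and the fibers of $F_{(-)}$ are $(k-1)$-connected, then invoke Lemma \ref{lem.curvature} to identify those fibers with $\flat\B^k U(1)$. You add a bit more detail than the paper by explicitly justifying the $(k-1)$-connectivity of $\flat\B^k U(1)$ via the commutation of $\flat$ with $n$-truncation on crisp types, which the paper takes for granted.
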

\begin{proof}
As $\Lambda^{k+1}_{\text{cl}}$ is $0$-truncated and so $(k-1)$ truncated, it will suffice to show that
$F_{(-)}$ is $(k-1)$-connected. But by \cref{lem.curvature}, the fiber of $F_{(-)}$
over any point $\omega : \Lambda^{k+1}_{\text{cl}}$ is identifiable with $\flat
B^{k}U(1)$, which is $(k-1)$-connected. 
\end{proof}

Though our notation may have suggested that the $\B^k_{\nabla}U(1)$ form a loop
spectrum, they do not. Indeed, $\Omega \B^k_{\nabla}U(1) = \flat \B^{k-1}U(1)$,
as can be seen by taking loops of the pullback square defining
$\B^k_{\nabla}U(1)$ and noting that $\Lambda^{n+1}_{\text{cl}}$ is a set
($0$-type). In total,
\[
\pi_{\ast}\B^k_{\nabla}U(1) = \begin{cases} \Lambda^{k+1}_{\text{cl}}
  &\mbox{if $\ast = 0$} \\
  \flat U(1) &\mbox{if $\ast = k$} \\
  0 &\mbox{otherwise.} \end{cases}
\]

Nevertheless, each $\B^k_{\nabla}U(1)$ is an infinite loop space in
its own right.

\begin{defn}
For $n,\, k \geq 0$, define $\B^n \B^k_{\nabla}U(1)$ to be the following pullback:
\[\begin{tikzcd}
	{\B^n\B^k_{\nabla}U(1)} & {\B^n\Lambda^{k+1}_{\text{cl}}} \\
	{\B^{n + k + 1} \Zb} & {\flat \B^{n + k + 1} \Rb} \\
	{}
	\arrow[from=1-1, to=1-2]
	\arrow[from=1-2, to=2-2]
	\arrow[from=1-1, to=2-1]
	\arrow[from=2-1, to=2-2]
	\arrow["\lrcorner"{anchor=center, pos=0.125}, draw=none, from=1-1, to=2-2]
\end{tikzcd}\]
\end{defn}

It is immediate from this definition and the commutation of taking loops with
taking pullbacks that $\Omega\B^{n+1}\B^k_{\nabla}U(1) = \B^n\B^k_{\nabla}U(1)$.
We have already seen these deloopings before in Diagram (\ref{eqn.big_diff_coh_diagram}):
\[\begin{tikzcd}
	{\B^k_{\nabla}U(1)} & {\Lambda^{k+1}_{\text{cl}}} \\
	{\B\B^{k-1}_{\nabla}U(1)} & {\B \Lambda^{k}_{\text{cl}}} \\
	\vdots & \vdots \\
	{\B^{k-1}\B_{\nabla}U(1)} & {\B^{k-1}\Lambda^2_{\text{cl}}} \\
	{\B^kU(1)} & {\B^k \Lambda^1_{\text{cl}}} \\
	{\B^{k+1}\Zb} & {\flat \B^{k+1}\Rb}
	\arrow[from=5-1, to=6-1]
	\arrow[from=5-1, to=5-2]
	\arrow[from=5-2, to=6-2]
	\arrow[from=6-1, to=6-2]
	\arrow["\lrcorner"{anchor=center, pos=0.125}, draw=none, from=5-1, to=6-2]
	\arrow[from=4-1, to=5-1]
	\arrow[from=4-1, to=4-2]
	\arrow[from=4-2, to=5-2]
	\arrow["\lrcorner"{anchor=center, pos=0.125}, draw=none, from=4-1, to=5-2]
	\arrow[from=1-1, to=2-1]
	\arrow[from=1-1, to=1-2]
	\arrow[from=1-2, to=2-2]
	\arrow[from=2-1, to=2-2]
	\arrow["\lrcorner"{anchor=center, pos=0.125}, draw=none, from=1-1, to=2-2]
	\arrow[from=2-2, to=3-2]
	\arrow[from=3-2, to=4-2]
	\arrow[from=2-1, to=3-1]
	\arrow[from=3-1, to=4-1]
\end{tikzcd}\]
These maps along the left hand side give us maps of loop spectra
$$\B^{\bullet} \B^k_{\nabla} U(1) \to \Sigma \B^{\bullet}\B^{k-1}_{\nabla} U(1)$$
We will see in \cref{sec:subtleties} that for $\bullet = 0$, these maps give
obstructions to de Rham's theorem for general types.

Each $\B^k_{\nabla} U(1)$ is a higher group itself. We may
therefore ask: what is it's infinitesimal remainder?
\begin{lem}\label{lem:inf.remainder.curvature}
The infinitesimal remainder of $\B^k_{\nabla}U(1)$ is the curvature $F_{(-)} :
\B^k_{\nabla}U(1) \to \Lambda^{k+1}_{\text{cl}}$.
\end{lem}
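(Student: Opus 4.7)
The plan is to apply \cref{prop:exact.sequence.inf.remainder} to the fiber sequence produced by \cref{lem.curvature}, combined with a direct computation showing that $\Lambda^{k+1}_{\text{cl}}$ is its own infinitesimal remainder.

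Since $\B^k_{\nabla}U(1)$ is defined as a pullback of higher groups, it is itself a higher group and the curvature $F_{(-)}$ is a homomorphism. \cref{lem.curvature} then yields a crisp exact sequence of higher groups
\[
\flat \B^k U(1) \hookrightarrow \B^k_{\nabla}U(1) \xrightarrow{F_{(-)}} \Lambda^{k+1}_{\text{cl}}.
\]
(That this underlies an exact sequence of higher groups, i.e.\ that $\flat \B^{k+1}U(1) \to \B \B^k_{\nabla}U(1) \to \B \Lambda^{k+1}_{\text{cl}}$ is again a fiber sequence, follows from the pullback definition of $\B \B^k_{\nabla}U(1)$ by the same Bockstein argument as in \cref{lem.curvature}.) Since $\flat \B^k U(1)$ is discrete, part~(2) of \cref{prop:exact.sequence.inf.remainder} then gives that the induced map $F_{(-)\ast}$ between infinitesimal remainders is an equivalence.

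It remains to show that the infinitesimal remainder of $\Lambda^{k+1}_{\text{cl}}$ is $\Lambda^{k+1}_{\text{cl}}$ itself, identified via $\theta$. Since $\Lambda^{k+1}_{\text{cl}}$ is a set with $\flat \Lambda^{k+1}_{\text{cl}} = \ast$ (\cref{assumption:form.classifiers}), the type $\flat \B \Lambda^{k+1}_{\text{cl}}$ is a $0$-connected $1$-type (because $\flat$ preserves truncation and $0$-connectedness) whose loop space $\flat \Omega \B \Lambda^{k+1}_{\text{cl}} = \flat \Lambda^{k+1}_{\text{cl}} = \ast$ is trivial; hence $\flat \B \Lambda^{k+1}_{\text{cl}}$ is contractible. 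Consequently its infinitesimal remainder, namely $\fib((-)_{\flat} : \flat \B \Lambda^{k+1}_{\text{cl}} \to \B \Lambda^{k+1}_{\text{cl}})$, is simply $\Omega \B \Lambda^{k+1}_{\text{cl}} = \Lambda^{k+1}_{\text{cl}}$, with $\theta$ the identification.

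Combining these, naturality of $\theta$ (\cref{defn:infinitesimal.remainder}) applied to the homomorphism $F_{(-)}$ produces a commuting square whose right and bottom arrows are equivalences and whose left arrow is $F_{(-)}$. Composing with the inverses of the equivalences identifies the infinitesimal remainder of $\B^k_{\nabla}U(1)$ with the curvature $F_{(-)} : \B^k_{\nabla}U(1) \to \Lambda^{k+1}_{\text{cl}}$, as claimed. The only subtlety is verifying exactness at the level of higher groups rather than merely of types, which is the parenthetical remark above.
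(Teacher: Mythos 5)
Your proof is correct, but it takes a genuinely different and somewhat cleaner route than the paper's. The paper chases a three-dimensional cube diagram: it applies $\flat$ (which is left exact) to the defining pullback square of $\B\B^k_{\nabla}U(1)$, uses the diagonal fiber sequences defining the infinitesimal remainders to deduce that the back face of the cube is also a pullback, and then extends by one more fiber to read off $\theta = F_{(-)}$ directly. You instead factor the argument through already-established machinery: apply \cref{prop:exact.sequence.inf.remainder}(2) to the exact sequence $\flat\B^k U(1) \to \B^k_{\nabla}U(1) \xto{F_{(-)}} \Lambda^{k+1}_{\text{cl}}$ coming from \cref{lem.curvature}, using that the fiber is discrete, and combine with the observation that an infinitesimal group is its own infinitesimal remainder (a tidy dual to \cref{prop:discrete.iff.remainder.contractible} that is worth recording: $\flat K = \ast$ forces $\flat\B K = \B\flat K = \ast$ by \cref{lem:flat.preserves.groups}, whence $\theta_K : K \to \lie{k}$ has all fibers merely $\flat K = \ast$ by \cref{lem:fibers.of.inf.remainder}). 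Your approach buys modularity and avoids the cube gymnastics, at the cost of one unstated step: you invoke ``naturality of $\theta$'' to get the commuting square $F_{(-)\ast} \circ \theta = \theta_{\Lambda} \circ F_{(-)}$, but the paper only records (\cref{defn:infinitesimal.remainder}) the compatibility of $f_{\ast}$ with $\flat\B f$ and $\B f$, not the naturality of the connecting map $\theta$ itself. This naturality is the standard fact that the boundary map in a long fiber sequence is natural with respect to maps of fiber sequences, so it is unproblematic, but it is worth flagging since the paper never writes down that square. The parenthetical about exactness of the delooped sequence is right in substance; the mechanism is not really a Bockstein argument but simply the delooping of the pullback composite from Diagram~\ref{eqn.big_diff_coh_diagram} used in \cref{lem.curvature}, shifted up one degree.
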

\begin{proof}
Consider the following diagram:
\[\begin{tikzcd}
	{\flat_{\text{dR}}\B\B^k_{\nabla}U(1)} & {\Lambda^{k+1}_{\text{cl}}} \\
	\ast & \ast & {\flat\B\B^k_{\nabla}U(1)} & {\flat\B\Lambda^{k+1}_{\text{cl}}} \\
	&& {\B^{k+2}\Zb} & {\flat\B^{k+2}\Rb} & {\B\B^k_{\nabla}U(1)} & {\B\Lambda^{k+1}_{\text{cl}}} \\
	&&&& {\B^{k+2}\Zb} & {\flat\B^{k+2}\Rb}
	\arrow[from=3-5, to=3-6]
	\arrow[from=3-6, to=4-6]
	\arrow[from=4-5, to=4-6]
	\arrow[from=3-4, to=4-6, equals]
	\arrow[from=2-4, to=3-6]
	\arrow[from=3-3, to=4-5, equals]
	\arrow[from=2-4, to=3-4]
	\arrow[from=2-3, to=2-4]
	\arrow[from=3-3, to=3-4]
	\arrow[from=2-2, to=3-4]
	\arrow[from=2-1, to=3-3]
	\arrow[from=1-1, to=2-1]
	\arrow[from=1-2, to=2-2]
	\arrow["\sim", from=1-1, to=1-2]
	\arrow[from=2-1, to=2-2]
	\arrow[from=1-2, to=2-4]
	\arrow[from=1-1, to=2-3, crossing over]
	\arrow[from=2-3, to=3-3, crossing over]
	\arrow[from=2-3, to=3-5, crossing over]
	\arrow[from=3-5, to=4-5, crossing over]
	\arrow["\lrcorner"{anchor=center, pos=0.125}, draw=none, from=2-3, to=3-4]
	\arrow["\lrcorner"{anchor=center, pos=0.125}, draw=none, from=3-5, to=4-6]
	\arrow["\lrcorner"{anchor=center, pos=0.125}, draw=none, from=1-1, to=2-2]
\end{tikzcd}\]
The diagonal sequences in this diagram are fiber sequences of the
$\flat$-counits which define the infinitesimal remainders. Now, since $\flat \B
\Lambda^{k+1}_{\text{cl}} = \ast$ since the form classifiers are infinitesimal, we find that the fiber of the
$\flat$-counit $(-)_{\flat} : \flat \B \Lambda^{k+1}_{\text{cl}} \to \B
\Lambda^{k+1}_{\text{cl}}$ is $\Omega \B \Lambda^{k+1}_{\text{cl}}$, which is
$\Lambda^{k+1}_{\text{cl}}$.

Now, the frontmost square is a crisp pullback and $\flat$ is left exact, so the
middle square is also a pullback. Then, since the diagonal sequences are fiber
sequences, the back square is also a pullback. But this shows that the
infinitesimal remainder of $\B^{k}_{\nabla}U(1)$ is $\Lambda^{k+1}_{\text{cl}}$.

If we take one more fiber, we can continue the diagram to give us the following diagram:
\[\begin{tikzcd}
	{\B^{k}_{\nabla}U(1)} & {\Lambda^{k+1}_{\text{cl}}} \\
	{\B^{k+1}\Zb} & {\flat\B^{k+1}\Rb} & {\flat_{\text{dR}}\B\B^k_{\nabla}U(1)} & {\Lambda^{k+1}_{\text{cl}}} \\
	&& \ast & \ast & {\flat\B\B^k_{\nabla}U(1)} & {\flat\B\Lambda^{k+1}_{\text{cl}}} \\
	&&&& {\B^{k+2}\Zb} & {\flat\B^{k+2}\Rb} \\
	\\
	&& {}
	\arrow[from=2-4, to=3-4]
	\arrow[from=2-3, to=2-4, "\sim" near start]
	\arrow[from=3-3, to=3-4]
	\arrow[from=3-3, to=4-5]
	\arrow[from=4-5, to=4-6]
	\arrow[from=3-4, to=4-6]
	\arrow[from=3-6, to=4-6]
	\arrow[from=2-4, to=3-6]
	\arrow[from=3-5, to=3-6]
	\arrow[from=1-2, to=2-4, equals]
	\arrow[from=2-2, to=3-4]
	\arrow[from=2-1, to=3-3]
	\arrow[from=1-1, to=2-1]
	\arrow[from=1-2, to=2-2]
	\arrow[from=1-1, to=1-2, "F_{(-)}"]
	\arrow[from=2-1, to=2-2]
	\arrow[from=1-1, to=2-3, crossing over, "\theta" near end]
	\arrow[from=2-3, to=3-3, crossing over]
	\arrow[from=2-3, to=3-5, crossing over]
	\arrow[from=3-5, to=4-5, crossing over]
	\arrow["\lrcorner"{anchor=center, pos=0.125}, draw=none, from=1-1, to=2-2]
	\arrow["\lrcorner"{anchor=center, pos=0.125}, draw=none, from=3-5, to=4-6]
	\arrow["\lrcorner"{anchor=center, pos=0.125}, draw=none, from=2-3, to=3-4]
\end{tikzcd}\]
This shows that the infinitesimal remainder $\theta$ is equal, modulo our
constructed equivalence, to the curvature $F_{(-)}$.
\end{proof}

Now that we know the infinitesimal remainder of $\B^{k}_{\nabla}U(1)$, we are
almost ready to understand its modal fracture hexagon. But first, we must
understand its universal $\infty$-cover. We will show that the universal
$\infty$-cover of $\B^{k}_{\nabla}U(1)$ is an analogous type $\B^{k}_{\nabla}\Rb$.
\begin{defn}
For $n, k \geq 0$, define $\B^n\B^k_{\nabla} \Rb$ to be the universal
$\infty$-cover of $\B^n\Lambda^{k+1}_{\text{cl}}$:
\[\begin{tikzcd}
	{\B^n\B^k_{\nabla}\Rb} & {\B^n\Lambda^{k+1}_{\text{cl}}} \\
	\ast & {\flat \B^{n + k + 1}\Rb}
	\arrow[from=1-1, to=2-1]
	\arrow[from=2-1, to=2-2]
	\arrow[from=1-1, to=1-2, "F_{(-)}"]
	\arrow[from=1-2, to=2-2]
	\arrow["\lrcorner"{anchor=center, pos=0.125}, draw=none, from=1-1, to=2-2]
\end{tikzcd}\]
We refer to the cohomology theories $\B^k_{\nabla} \Rb$ as \emph{pure
  differential cohomology}.
\end{defn}

Just as we may think of $\B^k_{\nabla}U(1)$ as classifying circle $k$-gerbes
with connection, we may think of $\B^k_{\nabla}\Rb$ as classifying affine
$k$-gerbes with connection. We can now show that $\B^k_{\nabla} \Rb$ is the universal $\infty$-cover of
$\B^k_{\nabla}U(1)$.
\begin{prop}\label{prop:universal.infty.cover.diff.coh}
The map $(\omega,\, p) \mapsto (\pt_{\B^{k+1}\Zb},\, \omega,\, \lambda\_\,.\,p) :
\B^k_{\nabla}\Rb \to \B^k_{\nabla} U(1)$ is the universal $\infty$-cover of $\B^k_{\nabla}U(1)$.
\end{prop}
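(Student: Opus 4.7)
The plan is to recognize $\B^k_{\nabla}\Rb$ as the fiber of the $\shape$-unit of $\B^k_{\nabla}U(1)$ and then observe that the given formula is exactly the map into the total space. First I would invoke \cref{lem:diff.coh.shape.naturality}, which identifies the map $\B^k_{\nabla}U(1) \to \B^{k+1}\Zb$ with the $\shape$-unit of $\B^k_{\nabla}U(1)$. By \cref{defn:universal.infty.cover}, the universal $\infty$-cover of $\B^k_{\nabla}U(1)$ is therefore the fiber of this map over the basepoint $\pt_{\B^{k+1}\Zb}$.

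Next, I would use the fact that the fiber of a base map of a pullback square is the fiber of the opposite map, applied to the defining pullback
\[
\begin{tikzcd}
  \B^k_{\nabla}U(1) \ar[r, "F_{-}"] \ar[d] & \Lambda^{k+1}_{\text{cl}} \ar[d, "(-)^{\shape}"] \\
  \B^{k+1}\Zb \ar[r] & \flat \B^{k+1}\Rb
\end{tikzcd}
\]
This identifies $\fib(\B^k_{\nabla}U(1) \to \B^{k+1}\Zb)$ with $\fib((-)^{\shape} : \Lambda^{k+1}_{\text{cl}} \to \flat \B^{k+1}\Rb)$ over the image of $\pt_{\B^{k+1}\Zb}$, which is $\pt_{\flat \B^{k+1}\Rb}$. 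By the definition of $\B^k_{\nabla}\Rb$ as the pullback of $\ast \to \flat \B^{k+1}\Rb \leftarrow \Lambda^{k+1}_{\text{cl}}$, this fiber is precisely $\B^k_{\nabla}\Rb$.

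Finally, I would check that the canonical comparison map arising from this identification agrees with the formula $(\omega, p) \mapsto (\pt_{\B^{k+1}\Zb}, \omega, \lambda\_.\,p)$: unfolding the pullback, an element of $\B^k_{\nabla}U(1)$ is a triple $(c, \omega, h)$ with $h : c = \omega^{\shape}$ in $\flat \B^{k+1}\Rb$, and an element of $\B^k_{\nabla}\Rb$ is a pair $(\omega, p)$ with $p : \pt = \omega^{\shape}$; the inclusion of the fiber over $\pt_{\B^{k+1}\Zb}$ is then given by setting $c \equiv \pt$ and $h \equiv p$, which is exactly the stated formula. No step here is a serious obstacle, since each appeals only to the universal property of pullbacks, the earlier lemma \cref{lem:diff.coh.shape.naturality}, and the definitions of $\B^k_{\nabla}\Rb$ and the universal $\infty$-cover; the only mild bookkeeping is keeping track of basepoints and the identification $\shape \Lambda^{k+1}_{\text{cl}} = \flat \B^{k+1}\Rb$ from \cref{thm:form.classifiers.contractible}.
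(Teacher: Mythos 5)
Your proof is correct and follows essentially the same strategy as the paper: identify $\B^k_{\nabla}U(1) \to \B^{k+1}\Zb$ as the $\shape$-unit via \cref{lem:diff.coh.shape.naturality}, then use pullback pasting to recognize its fiber over the basepoint as $\B^k_{\nabla}\Rb$. The paper packages the same pasting argument as a commuting cube of pullbacks; your explicit verification of the stated formula is a small bonus that the paper leaves implicit.
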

\begin{proof}
Consider the following cube: 
\[\begin{tikzcd}
	{\B^k_{\nabla}\Rb} & {\Lambda^{k+1}_{\text{cl}}} \\
	\ast & {\flat \B^{k + 1}\Rb} & {\B^k_{\nabla} U(1)} & {\Lambda^{k+1}_{\text{cl}}} \\
	&& {\B^{k + 1}\Zb} & {\flat \B^{k + 1}\Rb}
	\arrow[from=1-1, to=2-1]
	\arrow[from=2-1, to=2-2]
	\arrow[from=1-1, to=1-2]
	\arrow[from=1-2, to=2-2]
	\arrow[from=1-2, to=2-4, equals]
	\arrow[from=2-1, to=3-3]
	\arrow[from=2-2, to=3-4, equals]
	\arrow[from=2-3, to=2-4]
	\arrow[from=2-4, to=3-4]
	\arrow[from=3-3, to=3-4]
	\arrow[from=1-1, to=2-3, crossing over, dashed]
	\arrow[from=2-3, to=3-3, crossing over]
	\arrow["\lrcorner"{anchor=center, pos=0.125}, draw=none, from=2-3, to=3-4]
	\arrow["\lrcorner"{anchor=center, pos=0.125}, draw=none, from=1-1, to=2-2]
\end{tikzcd}\]
In this cube, the front and back spaces are pullbacks by definition, and the
right face is a pullback because its top and bottom sides are identities. Therefore, the left face is a pullback. Since
$\B^k_{\nabla}U(1) \to \B^{k+1} \Zb$ is a $\shape$-unit by
\cref{lem:diff.coh.shape.naturality}, this shows that the dashed map is the
fiber of a $\shape$-unit, and therefore the universal $\infty$-cover.
\end{proof}
\begin{rmk}
  The fiber sequence
  $$\B^k_{\nabla} \Rb \to \B^k_{\nabla} U(1) \to \B^{k + 1} \Zb$$
  expresses the informal identity
  $$\mbox{ordinary differential cohomology} = \mbox{pure differential
    cohomology} + \mbox{ordinary cohomology}.$$
\end{rmk}

We are now ready to assemble what we have learned into the modal facture hexagon
of $\B^k_{\nabla}U(1)$:
\begin{equation}\label{eqn:differential.hexagon}
    \begin{tikzcd}
      & \B^k_{\nabla}\Rb \ar[dr, "\pi"] \ar[rr] & & \Lambda^{k+1}_{\text{cl}} 
      \ar[dr,  "(-)^{\shape}"] \\
      \flat \B^k\Rb \ar[ur, "(-)_{\flat}"] \ar[dr] & &
       \B^k_{\nabla}U(1) \ar[dr, "(-)^{\shape}"'] \ar[ur, "F_{(-)}"] & & \flat\B^{k+1}\Rb \\
      & \flat \B^k U(1) \ar[rr, "\beta"'] \ar[ur, "(-)_{\flat}"'] & & \B^{k+1}\Zb \ar[ur] &
    \end{tikzcd}
    \end{equation}

  \subsection{Descending to Cohomology and the Character Diagram}\label{sec:subtleties}

  In this section, we will discuss how the modal fracture hexagon (\ref{eqn:differential.hexagon}) descends to cohomology. In general, if $E_{\bullet}$ is a loop spectrum, then the we may define the cohomology groups of a type valued in $E_{\bullet}$ to be the $0$-truncated types of maps:
  $$H^k(X; E_{\bullet}) :\equiv \trunc{X \to E_k}_{0}.$$

  However, these abelian groups are not discrete --- externally, they are
  (possible non-constant) sheaves of abelian groups. We will want the discrete
  (externally, constant) invariants. 
  With this in mind, we make the following definitions.
 \begin{defn}
  Let $X$ be a crisp type. We then make the following definitions:
  \begin{align*}
    H^n(X; \Zb) &:\equiv \trunc{\flat(X \to \B^n \Zb)}_0 \\
    H^n(X; \flat \Rb) &:\equiv \trunc{\flat(X \to \flat \B^n \Rb)}_0 \\
    H^n(X; \flat U(1)) &:\equiv \trunc{\flat( X \to \flat \B^n U(1) )}_0 \\
    H_{\nabla}^{n,k}(X; U(1)) &:\equiv \trunc{\flat( X \to \B^n\B^k_{\nabla}U(1) )}_0\\
    H_{\nabla}^{n,k}(X; \Rb) &:\equiv \trunc{\flat( X \to \B^n\B^k \Rb )}_0 \\
    \Lambda^k(X) &:\equiv \flat( X \to \Lambda^k ) \\
    \Lambda^k_{\text{cl}}(X) &:\equiv \flat( X \to \Lambda^k_{\text{cl}} ).
  \end{align*}
 \end{defn}

 \begin{rmk}
In full cohesion, it would be better to work with codiscrete cohomology groups,
rather than discrete cohomology groups. This way the definition could be given
for all types and not just crisp ones. But we will continue to use discrete
groups so that we do not need to work with the codiscrete modality $\sharp$ in this paper.
 \end{rmk}

We note that with these definitions we may reduce the calculation of
ordinary differential cohomology for discrete and homotopically contractible types.
\begin{prop}\label{prop:calc.diff.coh.disc.contr}
Let $X$ be a crisp type and let $k \geq 1$.
\begin{enumerate}
\item If $X$ is discrete (that is, $X = \shape X$), then $H^{n,k}_{\nabla}(X;
  U(1)) = H^{n + k}(X; \flat U(1))$.
\item If $X$ is homotopically contractible (that is, $\shape X = \ast$), then $H^{n,k}_{\nabla}(X; U(1)) = H^n(X; \Lambda^{k
  +1}_{\text{cl}})$.
\end{enumerate}
We may make similar calculations for pure differential cohomology:
\begin{enumerate}
  \item If $X$ is discrete, then $H^{n, k}(X; \Rb) = H^{n + k}_{\nabla}(X; \flat \Rb)$.
  \item If $X$ is homotopically contractible, then $H^{n, k}_{\nabla}(X; \Rb) =
    H^n(X;\Lambda^{k + 1}_{\text{cl}})$.
\end{enumerate}
\end{prop}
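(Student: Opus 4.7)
The plan is to compute both $\flat(X \to \B^n \B^k_{\nabla}U(1))$ and $\flat(X \to \B^n \B^k_{\nabla}\Rb)$ by applying the mapping functor $(X \to -)$ and $\flat$ to the defining pullback squares, and using the hypothesis on $X$ to collapse the pullback corners. As a preliminary, I would verify that $\flat \B^m \Lambda^{k+1}_{\text{cl}} = \ast$ for every $m \geq 0$: the case $m = 0$ is the assumed infinitesimality of the form classifiers in \cref{assumption:form.classifiers}, and the inductive step uses that $\Lambda^{k+1}_{\text{cl}}$ is an abelian (hence $\infty$-commutative) crisp higher group, so $\flat \B^{m+1}\Lambda^{k+1}_{\text{cl}}$ is $0$-connected (by the argument of \cref{lem:flat.preserves.groups}) with trivial loop space $\flat \B^{m}\Lambda^{k+1}_{\text{cl}}$, hence contractible.

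For (1), since $X = \shape X$, \cref{thm:adjointness_of_opposites} gives $\flat(X \to Y) = \flat(X \to \flat Y)$ for crisp $Y$, so it suffices to identify $\flat \B^n \B^k_{\nabla}U(1)$. Applying $\flat$ to the defining pullback (using that $\flat$ preserves crisp pullbacks, via fiber preservation from \cref{lem:flat_fiber_sequenes} together with preservation of products) and simplifying using $\flat \B^n \Lambda^{k+1}_{\text{cl}} = \ast$, $\flat\flat = \flat$, and $\flat \B^{n+k+1}\Zb = \B^{n+k+1}\Zb$ (since $\Zb$ is crisply discrete), we obtain
$$\flat \B^n \B^k_{\nabla}U(1) = \fib\bigl(\B^{n+k+1}\Zb \to \flat \B^{n+k+1}\Rb\bigr).$$
Applying $\flat$ to the crisp short exact sequence $0 \to \Zb \to \Rb \to U(1) \to 0$ via \cref{lem:flat.preseres.exact.sequences} yields $0 \to \Zb \to \flat\Rb \to \flat U(1) \to 0$, whose delooped long fiber sequence identifies this fiber with $\flat \B^{n+k}U(1)$. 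Hence $H^{n,k}_{\nabla}(X; U(1)) = \trunc{\flat(X \to \flat \B^{n+k}U(1))}_0 = H^{n+k}(X; \flat U(1))$.

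For (2), I would instead compute the mapping pullback directly without first applying $\flat$. Since $\shape X = \ast$ and both $\flat \B^{n+k+1}\Rb$ and $\B^{n+k+1}\Zb$ are discrete, we have $(X \to \flat \B^{n+k+1}\Rb) = \flat \B^{n+k+1}\Rb$ and $(X \to \B^{n+k+1}\Zb) = \B^{n+k+1}\Zb$; moreover, because $\B^n \Lambda^{k+1}_{\text{cl}} \to \flat \B^{n+k+1}\Rb$ is the $\shape$-unit (by \cref{thm:form.classifiers.contractible}), its post-composition from $X$ factors through $\shape X = \ast$ and is constant at the basepoint. The mapping pullback therefore splits as a product
$$(X \to \B^n \B^k_{\nabla}U(1)) = (X \to \B^n \Lambda^{k+1}_{\text{cl}}) \times \flat \B^{n+k}U(1).$$
After applying $\flat$ and $\trunc{-}_0$, the second factor contributes $\trunc{\flat \B^{n+k}U(1)}_0 = \flat \trunc{\B^{n+k}U(1)}_0 = \ast$ because $k \geq 1$ forces $\B^{n+k}U(1)$ to be $0$-connected, yielding $H^{n,k}_{\nabla}(X; U(1)) = H^n(X; \Lambda^{k+1}_{\text{cl}})$. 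The arguments for pure differential cohomology are the same, with the corner $\B^{n+k+1}\Zb$ replaced by $\ast$, so that the identified fiber in (1) becomes $\Omega \flat \B^{n+k+1}\Rb = \flat \B^{n+k}\Rb$, and likewise the second factor in the product for (2). The main obstacle is the preliminary identification $\flat \B^m \Lambda^{k+1}_{\text{cl}} = \ast$; once it is in hand, the rest is careful diagram-chasing through the defining pullback.
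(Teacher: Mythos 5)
Part (1) of your argument is correct and essentially matches the paper's, which uses the same chain $\flat(X \to \B^n\B^k_{\nabla}U(1)) = \flat(\shape X \to \B^n\B^k_{\nabla}U(1)) = \flat(X \to \flat\B^n\B^k_{\nabla}U(1))$ via \cref{thm:adjointness_of_opposites}, and then identifies $\flat\B^n\B^k_{\nabla}U(1)$ with $\flat\B^{n+k}U(1)$ by pushing $\flat$ through the defining pullback and the Bockstein sequence. Your preliminary observation that $\flat\B^m\Lambda^{k+1}_{\mathrm{cl}} = \ast$ for all $m$ is also correct and is exactly the mechanism the paper invokes (cf.\ the remark after Diagram~\ref{eqn:big.diff.coh.diagram.abstract}).

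Part (2) has a genuine gap. You assert that because $(-)^{\shape} : \B^n\Lambda^{k+1}_{\mathrm{cl}} \to \flat\B^{n+k+1}\Rb$ is the $\shape$-unit, its post-composition $p : (X \to \B^n\Lambda^{k+1}_{\mathrm{cl}}) \to (X \to \flat\B^{n+k+1}\Rb) \simeq \flat\B^{n+k+1}\Rb$ is constant at the basepoint, so the mapping pullback splits as a product. That constancy is false. What is true is that, for each \emph{individual} $\omega : X \to \B^n\Lambda^{k+1}_{\mathrm{cl}}$, the composite $(-)^{\shape}\circ\omega : X \to \flat\B^{n+k+1}\Rb$ factors through $\shape X = \ast$; but the resulting point of $\flat\B^{n+k+1}\Rb$ \emph{depends on $\omega$}. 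For the simplest contractible $X = \ast$, $p$ is literally the $\shape$-unit $\B^n\Lambda^{k+1}_{\mathrm{cl}} \to \flat\B^{n+k+1}\Rb$, which is surjective, not constant. Since neither leg of the pullback is null, the mapping pullback does not split as a product, and your decomposition $(X \to \B^n\B^k_{\nabla}U(1)) = (X \to \B^n\Lambda^{k+1}_{\mathrm{cl}}) \times \flat\B^{n+k}U(1)$ does not hold.

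The correct (and weaker) observation, which is what the paper's proof actually uses, is that the projection $(X \to \B^n\B^k_{\nabla}U(1)) \to (X \to \B^n\Lambda^{k+1}_{\mathrm{cl}})$ has $0$-connected fibers: over a given $\omega$, the fiber is $(c : \B^{n+k+1}\Zb) \times (ic = e((-)^{\shape}\circ\omega))$, i.e.\ the fiber of $i : \B^{n+k+1}\Zb \to \flat\B^{n+k+1}\Rb$ over a (possibly nonbasepoint) element of the $0$-connected type $\flat\B^{n+k+1}\Rb$. All such fibers are merely $\flat\B^{n+k}U(1)$, which is $0$-connected for $k\geq 1$. This $0$-connectedness of the fiber, rather than a product decomposition, is what makes the $\trunc{-}_0$ of the total type equal to that of $(X \to \B^n\Lambda^{k+1}_{\mathrm{cl}})$. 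Your conclusion is right, but the path to it needs this correction.
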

\begin{proof}
We will only prove the identities for ordinary differential cohomology; the
proofs for pure differential cohomology are identical. We take advantage of the adjointness between $\shape$ and $\flat$.
\begin{enumerate}
\item Suppose that $X$ is discrete. Then
  \begin{align*}
    H^{n,k}_{\nabla}(X; U(1)) &= \trunc{\flat(X \to \B^n\B^k_{\nabla}U(1))}_0 \\
&= \trunc{\flat(\shape X \to \B^n\B^k_{\nabla}U(1))}_0 \\
&= \trunc{\flat(X \to \flat \B^n\B^k_{\nabla}U(1))}_0 \\
&= \trunc{\flat(X \to \flat \B^{n + k}U(1))}_0 \\
&= H^{n + k}(X; \flat U(1))
  \end{align*}
\item Suppose that $X$ is homotopically contractible, and let $i : \B^{n + k +
    1}\Zb \to \flat \B^{n + k + 1} \Rb$ denote the (delooping of) the inclusion. Then 
  \begin{align*}
    H^{n,k}_{\nabla}(X; U(1)) &= \trunc{\flat(X \to \B^n\B^k_{\nabla}U(1))} \\
                              &= \flat\trunc{
                                \begin{aligned}
                                  (\omega : X \to \B^n\Lambda^{k+1}_{\text{cl}}) &\times (c : X \to \B^{n + k + 1}\Zb) \\
                                  &\times (h : ic = \omega^{\shape})
                                \end{aligned}
                                }_0\\
                              &= \flat\trunc{
                                \begin{aligned}
                                  (\omega : X \to \B^n\Lambda^{k+1}_{\text{cl}}) &\times (c : \B^{n + k + 1}\Zb) \\
                                  &\times (h : (x : X) \to ic = \omega(x)^{\shape})
                                \end{aligned}
                                }_0\\
    \intertext{Since $X$ is homotopically contractible, we have an equivalence $e : (X \to \flat \B^{n + k + 1} \Rb) \simeq \flat \B^{n + k + 1}$. We therefore have $e((-)^{\shape} \circ \omega) : \flat \B^{n + k + 1}$ and for all $x : X$ a witness $\omega(x)^{\shape} = e((-)^{\shape} \circ \omega)$. We may therefore continue:}
                              &= \flat\trunc{
                                \begin{aligned}
                                  (\omega : X \to \B^n\Lambda^{k+1}_{\text{cl}}) &\times (c : \B^{n + k + 1}\Zb) \\
                                  &\times (h : (x : X) \to ic = e((-)^{\shape} \circ \omega))
                                \end{aligned}
                                }_0\\
                              &= \flat\trunc{
                                \begin{aligned}
                                  (\omega : X \to \B^n\Lambda^{k+1}_{\text{cl}}) &\times (c : \B^{n + k + 1}\Zb) \\
                                  &\times (ic = e((-)^{\shape} \circ \omega))
                                \end{aligned}
                                }_0\\
    \intertext{Now, both $\B^{n + k + 1} \Zb$ and $(ic = e((-)^{\shape} \circ \omega))$ are $0$-connected. The latter because it is identifiable with $\Omega \flat \B^{n + k + 1} \Rb$, which is $\flat \B^{n + k} \Rb$ and so $0$-connected for $k \geq 1$ and any $n$. We may therefore continue:}
                              &= \flat \trunc{X \to \B^n\Lambda^{k + 1}_{\text{cl}}}\\
    &= H^n(X; \Lambda^{k + 1}_{\text{cl}}). \qedhere
  \end{align*}
\end{enumerate}
\end{proof}

\begin{rmk}
  We note here that since every type $X$ lives in the center of a fiber sequence
  $$\inftycover{X} \to X \to \shape X$$
  between a homotopically contractible type and a discrete type, we get a Serre
  spectral sequence converging to the $k^{\text{th}}$ ordinary differential
  cohomology of $X$ with $E_2$ page depending on it's $\flat U(1)$ cohomology
  and the $\Lambda^{k + 1}_{\text{cl}}$ valued cohomology of it's universal $\infty$-cover.
\end{rmk}

 Now, since the top, bottom, and diagonal sequences in the modal fracture
 hexagon (\ref{eqn:differential.hexagon}) of $\B^k_{\nabla}U(1)$ are fiber
 sequences, when we take $\flat$ and $0$-truncations we will get long exact sequences. With
 the above definitions, we get the following diagram:
\begin{equation}\label{eqn:differential.hexagon.coh}
    \begin{tikzcd}
      & H^{0,k}_{\nabla}(X; \Rb) \ar[dr] \ar[rr] & & \Lambda^{k+1}_{\text{cl}}(X)
      \ar[dr] \\
      H^k(X;\flat \Rb) \ar[ur] \ar[dr] & &
       H^{0,k}_{\nabla}(X;U(1)) \ar[dr] \ar[ur] & & H^{k+1}(X;\flat \Rb) \\
      & H^{k}(X;\flat U(1)) \ar[rr, "\beta"'] \ar[ur] & & H^{k+1}(X; \Zb)\ar[ur] &
    \end{tikzcd}
    \end{equation}
in which the top and bottom sequences are long exact, and the diagonal sequences
are exact in the middle. This looks
very much like the character diagram for ordinary differential  cohomology \cite{Simons-Sullivan:Axiomatic.Ordinary.Differential.Cohomology} except for
two differences:
\begin{enumerate}
  \item Where we have the pure cohomology $H^{0,k}_{\nabla}(X;\Rb)$, one would normally find
    $\Lambda^{k}(X)/\im(d)$, the abelian group which fits into an exact sequence
$$\Lambda^{k-1}(X) \xto{d} \Lambda^{k}(X) \to \Lambda^{k}(X)/\im(d) \to 0.$$
\item Where we have $H^{k+1}(X; \flat \Rb)$, which is ordinary (discrete)
  cohomology with real coefficients, one would normally find the de Rham
  cohomology $H^{k + 1}_{\text{dR}}(X)$. The de Rham cohomology is defined as
  closed forms mod exact forms, and so $H^{k + 1}_{\text{dR}}(X)$ is the abelian group
  fitting the following exact sequence:
 $$\Lambda^{k}(X) \xto{d} \Lambda^{k + 1}_{\text{cl}}(X) \to H^{k + 1}_{\text{dR}}(X) \to 0.$$
\end{enumerate}
Both of these discrepancies are instances of de Rham's theorem that the de Rham
cohomology of forms is the (discrete) ordinary cohomology with real
coefficients. Classically and externally, this holds for smooth manifolds. We
note that de Rham's theorem cannot hold for all types for rather trivial
reasons: the form classifiers are sets, and so $\Lambda^k(X)$ depends only on
the set trunctation of $X$ whereas $H^k(X; \flat \Rb)$ can depend on the
$k$-truncation of $X$.
\begin{prop}\label{prop:de.Rham.fails}
The de Rham theorem does not hold for the delooping $\flat \B \Rb$ of the
discrete additive
group of real numbers. Explicitly,
\begin{align*}
  H^{1}_{\text{dR}}(\flat \B \Rb) &= 0 \\
  H^1(\flat \B \Rb; \flat \Rb) &= \flat\text{Hom}(\flat \Rb, \flat \Rb) \neq 0
\end{align*}
\end{prop}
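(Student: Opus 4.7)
The plan is to establish the two displayed identities separately; the de Rham cohomology will vanish for modal reasons, while the singular cohomology will fail to vanish because $\flat \B \Rb = K(\flat \Rb, 1)$ carries its tautological class.

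For the first identity, I would prove the stronger statement that $\Lambda^1_{\text{cl}}(\flat \B \Rb) = \ast$, i.e.\ that every crisp closed $1$-form on $\flat \B \Rb$ is trivial; the de Rham cohomology is then automatically zero as a quotient. Unfolding definitions, this amounts to computing $\flat(\flat \B \Rb \to \Lambda^1_{\text{cl}})$, and the key move is \cref{thm:adjointness_of_opposites} applied with $X := \flat \B \Rb$ and $Y := \Lambda^1_{\text{cl}}$, giving
$$\flat(\shape(\flat \B \Rb) \to \Lambda^1_{\text{cl}}) \;=\; \flat(\flat \B \Rb \to \flat \Lambda^1_{\text{cl}}).$$
The type $\flat \B \Rb$ is crisply $\flat$-modal, hence by Unity of Opposites (\cref{ax:unity_of_opposites}) also crisply $\shape$-modal, so $\shape(\flat \B \Rb) = \flat \B \Rb$ and the left side is $\Lambda^1_{\text{cl}}(\flat \B \Rb)$. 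On the right, $\Lambda^1_{\text{cl}}$ is infinitesimal by \cref{prop:inf.remainder.infinitesimal} (it is the infinitesimal remainder of $\Rb$), so $\flat \Lambda^1_{\text{cl}} = \ast$ and the right side is $\flat(\flat \B \Rb \to \ast) = \ast$. Thus $H^1_{\text{dR}}(\flat \B \Rb) = 0$.

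For the second identity I would unfold the definition to
$$H^1(\flat \B \Rb;\flat \Rb) \;=\; \trunc{\flat(\flat \B \Rb \to \flat \B \Rb)}_0,$$
and then commute $\flat$ past the $0$-truncation (using Corollary 6.7 of \cite{Shulman:Real.Cohesion}, valid since $\flat \B \Rb$ is crisp) to obtain $\flat \trunc{\flat \B \Rb \to \flat \B \Rb}_0$. The inner $0$-truncation is the standard identification $H^1(K(A,1);A) = \Hom(A,A)$ for the abelian group $A = \flat \Rb$: looping a map produces a group homomorphism on the unique non-trivial homotopy group, while delooping recovers a map from a homomorphism, and these are mutually inverse operations in the $K(A,1)$ setting (cf.\ \cite{Buchholtz-vanDoorn-Rijke:Higher.Groups}). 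The resulting group $\flat \Hom(\flat \Rb, \flat \Rb)$ is nonzero because the identity endomorphism is crisp and distinct from the zero homomorphism.

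The main obstacle is the final identification $\trunc{\flat \B \Rb \to \flat \B \Rb}_0 = \Hom(\flat \Rb, \flat \Rb)$: while this is a standard Eilenberg--MacLane computation, it uses unpointed mapping types and requires some care to ensure that the implicit choice of base point (and the connectedness of $\flat \B \Rb$) make the correspondence with homomorphisms precise in the synthetic cohesive framework. All other steps are routine invocations of the adjointness and modality technology already developed in the paper.
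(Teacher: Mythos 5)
Your proof is correct, and the second identity is handled essentially as in the paper: both arguments reduce $H^1(\flat\B\Rb;\flat\Rb)$ to $\trunc{\flat(\flat\B\Rb \to \flat\B\Rb)}_0$, commute $\flat$ past the truncation, and identify the result with $\flat\Hom(\flat\Rb,\flat\Rb)$ via the usual Eilenberg--MacLane description of (unpointed) maps between $K(A,1)$'s for abelian $A$, observing that the identity endomorphism is crisp and not conjugate (hence not equal, by abelianness) to zero. For the first identity, however, your route is genuinely different from the paper's. The paper's proof is elementary: it notes that $\flat\B\Rb$ is $0$-connected and $\Lambda^1_{\text{cl}}$ is a set, so every map $\flat\B\Rb\to\Lambda^1_{\text{cl}}$ factors through a point, whence $\Lambda^1_{\text{cl}}(\flat\B\Rb) = \flat\Lambda^1_{\text{cl}} = \ast$ and the de Rham quotient is trivial. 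You instead invoke the $\shape\dashv\flat$ adjunction of \cref{thm:adjointness_of_opposites}, using only that $\flat\B\Rb$ is crisply discrete (so $\shape$ fixes it) and that $\Lambda^1_{\text{cl}}$ is infinitesimal (so $\flat$ kills it). Your version buys a cleaner and somewhat more general statement --- the crisp mapping space from any crisp discrete type into any crisp infinitesimal type is contractible, with no hypotheses on truncation level or connectivity --- at the cost of invoking the heavier Theorem 9.15 machinery, whereas the paper's argument uses only the connectivity of $\flat\B\Rb$ and the fact that the form classifiers are sets. Either is a valid proof of the proposition.
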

\begin{proof}
Since $\flat \B \Rb$ is 0-connected and the form classifiers are sets, every map $\flat \B \Rb \to
\Lambda^{k}$ is constant for all $k$. Therefore,
\[
H^1_{\text{dR}}(\flat \B \Rb) = \Lambda^1_{\text{cl}}(\flat \B \Rb)/
\Lambda^0(\flat \B \Rb) = 0
\]
On the other hand, $H^{1}(\flat \B \Rb; \flat \Rb) = \trunc{\flat(\flat \B \Rb
  \to \flat \B \Rb)}_0$ is the set of group homomorphisms from $\flat \Rb$ to
itself (modulo conjugacy, which makes no difference). The identity is not
conjugate to $0$, and so this group is not trivial.
\end{proof}

We can, however, make explicit the obstruction
to de Rham's theorem lying in the first (cohomological) degree pure differential
cohomology groups $H^{1, k}_{\nabla}(X;\Rb)$.\footnote{A similar obstruction was also found independently through more classical means in \cite{minichiello2024diffeological}.} We begin first by trying to construct an exact sequence
$$\Lambda^k(X) \xto{d} \Lambda^{k + 1}(X) \to H^{0, k}_{\nabla}(X; \Rb) \to 0.$$
Recall Diagram \ref{eqn.big_diff_coh_diagram}. There is a similar diagram for
pure differential cohomology:

\[\begin{tikzcd}
	{\B^k_{\nabla}\Rb} & {\Lambda^{k+1}_{\text{cl}}} \\
	{\B\B^{k-1}_{\nabla}\Rb} & {\B \Lambda^{k}_{\text{cl}}} \\
	\vdots & \vdots \\
	{\B^{k-1}\B_{\nabla}\Rb} & {\B^{k-1}\Lambda^2_{\text{cl}}} \\ 
	{\B^k\Rb} & {\B^k \Lambda^1_{\text{cl}}} \\
	{\ast} & {\flat \B^{k+1}\Rb}
	\arrow[from=5-1, to=6-1]
	\arrow[from=5-1, to=5-2]
	\arrow[from=5-2, to=6-2]
	\arrow[from=6-1, to=6-2]
	\arrow["\lrcorner"{anchor=center, pos=0.125}, draw=none, from=5-1, to=6-2]
	\arrow[from=4-1, to=5-1]
	\arrow[from=4-1, to=4-2]
	\arrow[from=4-2, to=5-2]
	\arrow["\lrcorner"{anchor=center, pos=0.125}, draw=none, from=4-1, to=5-2]
	\arrow[from=1-1, to=2-1]
	\arrow[from=1-1, to=1-2]
	\arrow[from=1-2, to=2-2]
	\arrow[from=2-1, to=2-2]
	\arrow["\lrcorner"{anchor=center, pos=0.125}, draw=none, from=1-1, to=2-2]
	\arrow[from=2-2, to=3-2]
	\arrow[from=3-2, to=4-2]
	\arrow[from=2-1, to=3-1]
	\arrow[from=3-1, to=4-1]
\end{tikzcd}\]

If we focus at the top, we see that we have a pullback square which induces an
equivalence on fibers:
\begin{equation}\label{eqn:obstruction.diagram}
\begin{tikzcd}
	{\bullet} & {\Lambda^k} \\
	{\B^k_{\nabla} \Rb} & {\Lambda^{k + 1}_{\text{cl}} }\\
	{\B\B^{k-1}_{\nabla} \Rb} & {\B\Lambda^k_{\text{cl}}}
	\arrow[from=1-1, to=2-1]
	\arrow[from=2-1, to=3-1]
	\arrow[""{name=0, anchor=center, inner sep=0}, from=3-1, to=3-2]
	\arrow[from=2-1, to=2-2]
	\arrow[from=2-2, to=3-2]
	\arrow[from=1-2, to=2-2, "d"]
	\arrow["\sim", from=1-1, to=1-2]
	\arrow["\lrcorner"{anchor=center, pos=0.125}, draw=none, from=2-1, to=0]
\end{tikzcd}
\end{equation}
This gives us a fiber sequence
$$\Lambda^k \to \B^k_{\nabla} \Rb \to \B\B^{k-1}_{\nabla}\Rb,$$
which we may deloop as much as we like. Noting that $\Omega \B^k_{\nabla} \Rb =
\flat \B^{k -1} \Rb$, we therefore have a long exact sequence:
\[
0 \to H^{k-1}(X; \flat \Rb) \to H^{0,k-1}_{\nabla}(X;\Rb) \to \Lambda^k(X) \to
H^{0,k}_{\nabla}(X; \Rb) \to H^{1,k-1}(X ; \Rb) \to \cdots
\]
From this, we see that the surjectivity of the map $\Lambda^k(X) \to H^{0, k}(X;
\Rb)$ is determined by the vanishing of the map $H^{0,k}_{\nabla}(X; \Rb) \to
H^{1,k-1}(X; \Rb)$. Furthermore, the version of Diagram
\ref{eqn:obstruction.diagram} for $k-1$ shows us
that $d : \Lambda^{k-1}(X) \to \Lambda^k(X)$ factors through
$H^{0,k}_{\nabla}(X; \Rb)$. This means that for the kernel of $\Lambda^k(X) \to
H^{0, k}_{\nabla}(X; \Rb)$ to be the image of $d : \Lambda^{k-1}(X) \to \Lambda^k(X)$, we
need for $\Lambda^{k-1}(X) \to H^{0, k-1}_{\nabla}(X; \Rb)$ to be surjective;
this is controlled by the vanishing of $H^{0, k-1}_{\nabla}(X; \Rb) \to H^{1,
  k-2}_{\nabla}(X ; \Rb)$. In general, we see the obstructions to having exact
sequences
$$\Lambda^{k-1}(X) \to \Lambda^k(X) \to H^{0,k}_{\nabla}(X;\Rb)$$
lie in $H^{1, k-1}_{\nabla}(X; \Rb)$ and $H^{1, k-2}_{\nabla}(X; \Rb)$.

First cohomological degree pure differential cohomology groups also control
obstructions to de Rham's theorem for general types $X$. By definition we have a fiber sequence $\B_{\nabla}^k \Rb \to \Lambda^{k +
  1}_{\text{cl}} \to \flat \B^{k + 1} \Rb$ which may be delooped arbitrarily. We
therefore get exact sequences
$$0 \to H^{k}(X; \flat \Rb) \to H^{0,k}_{\nabla}(X; \Rb) \to \Lambda^{k +
  1}_{\text{cl}}(X) \to H^{k + 1}(X; \flat \Rb) \to H^{1,k}_{\nabla}(X; \Rb)\cdots$$

This exact sequence shows us that the surjectivity of the map $\Lambda^{k +
  1}_{\text{cl}}(X) \to H^{k + 1}(X; \flat \Rb)$ is controlled by the vanishing
of the map $H^{k + 1}(X; \flat \Rb) \to H^{1,k}_{\nabla}(X; \Rb)$. Furthermore,
in order for the kernel of $\Lambda^{k + 1}_{\text{cl}}(X) \to H^{k + 1}(X;
\flat \Rb)$ to be $d : \Lambda^k(X) \to \Lambda^{k + 1}_{\text{cl}}$, we need
for $\Lambda^k(X) \to H^{0, k}_{\nabla}(X; \Rb)$ to be surjective. As we saw
above, for the map $\Lambda^k(X) \to H^{0,k}_{\nabla}(X; \Rb)$ to be surjective,
we must have that $H^{0,k}_{\nabla}(X; \Rb) \to H^{1, k-1}_{\nabla}(X; \Rb)$ vanishes.

Remembering the classical, external differential cohomology hexagon, we are led
to the following conjecture:
\begin{conjecture}
Let $X$ be a crisp smooth manifold. Then $H^{1, k}_{\nabla}(X; \Rb)$ vanishes
for all $k$.
\end{conjecture}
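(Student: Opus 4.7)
The plan is to combine the Puppe extension of the defining fiber sequence
$$\B^k_{\nabla}\Rb \to \Lambda^{k+1}_{\text{cl}} \xrightarrow{(-)^{\shape}} \flat \B^{k+1}\Rb$$
with the classical de Rham theorem for smooth manifolds. All three types carry abelian group structure and the shape unit is a homomorphism, so the fiber sequence may be delooped and we may apply $\trunc{\flat(X \to -)}_0$ (extending the paper's definition of $H^n$ to read $H^1(X;\Lambda^{k+1}_{\text{cl}}) :\equiv \trunc{\flat(X \to \B\Lambda^{k+1}_{\text{cl}})}_0$) to extract the long exact sequence
$$\Lambda^{k+1}_{\text{cl}}(X) \xrightarrow{\varphi} H^{k+1}(X;\flat\Rb) \to H^{1,k}_{\nabla}(X;\Rb) \to H^1(X;\Lambda^{k+1}_{\text{cl}}) \xrightarrow{\rho} H^{k+2}(X;\flat\Rb).$$
It therefore suffices to show that $\varphi$ is surjective and $\rho$ is injective.

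Surjectivity of $\varphi$ is a direct instance of de Rham's theorem: on a smooth manifold every discrete real cohomology class admits a representative as a closed differential form, and $\varphi$ sends $\omega$ to its de Rham class $\omega^{\shape}$. Injectivity of $\rho$ is a second application of de Rham in a shifted degree. Using the short exact sequence of sheaves $0 \to \Lambda^{j}_{\text{cl}} \to \Lambda^{j} \xrightarrow{d} \Lambda^{j+1}_{\text{cl}} \to 0$ together with the vanishing of higher sheaf cohomology of the fine sheaves $\Lambda^j$ (which admit partitions of unity on manifolds), one inductively identifies
$$H^1(X;\Lambda^{k+1}_{\text{cl}}) = \Lambda^{k+2}_{\text{cl}}(X)/d\Lambda^{k+1}(X) = H^{k+2}_{\text{dR}}(X),$$
which by de Rham coincides with $H^{k+2}(X;\flat\Rb)$. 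A diagram chase through the Puppe sequence identifies $\rho$ with precisely this composite isomorphism, so in particular $\rho$ is injective.

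The main obstacle is the passage between the internal cohomology $\trunc{\flat(X \to -)}_0$ used throughout the paper and the external sheaf cohomology on the site of Euclidean spaces or manifolds. Both the acyclicity of $\Lambda^j$ and the de Rham isomorphism itself are classical results about manifolds that live outside the type theory, so care is needed to import them and, crucially, to identify the internal connecting map $\rho$ with its external counterpart. In particular, verifying that $\trunc{\flat(X \to \B\Lambda^{k+1}_{\text{cl}})}_0$ computes the sheaf cohomology group $H^1(X; \Lambda^{k+1}_{\text{cl}})$ for crisp smooth manifolds $X$ requires a \v{C}ech-type comparison theorem at the interface between the internal and external pictures, and establishing this comparison rigorously is the substantive step that elevates the conjecture beyond a routine diagram chase.
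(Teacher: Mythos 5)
The statement you are attempting to prove is stated in the paper as a \emph{conjecture}, not a theorem, and the paper offers no proof of it. So there is no ``paper's proof'' to compare against; the relevant question is whether your proposal actually closes the conjecture, and it does not.

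Your reduction is sound and in fact mirrors the analysis the paper itself carries out just before stating the conjecture: delooping the fiber sequence $\B^k_{\nabla}\Rb \to \Lambda^{k+1}_{\text{cl}} \to \flat\B^{k+1}\Rb$ and taking $\trunc{\flat(X \to -)}_0$ produces the long exact sequence you write, and the vanishing of $H^{1,k}_{\nabla}(X;\Rb)$ is indeed equivalent to the surjectivity and injectivity of the two maps you single out. The trouble is that both of those facts are, as you correctly note, instances of de Rham's theorem for $X$ in the \emph{internal} cohomology $\trunc{\flat(X \to -)}_0$ --- and that is precisely what the paper shows is \emph{not} available in general. Proposition \ref{prop:de.Rham.fails} exhibits a crisp type ($\flat \B \Rb$) for which internal de Rham fails outright, and the surrounding discussion in Section \ref{sec:subtleties} identifies the groups $H^{1,k}_{\nabla}(X;\Rb)$ as exactly the obstructions to internal de Rham holding for a given $X$. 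So your argument, read carefully, says ``de Rham holds for $X$ because de Rham holds for $X$.'' The circularity is obscured by the appeal to the classical external theorem, but the classical theorem cannot be imported: the paper supplies no internal notion of ``smooth manifold'' at all, the internal groups $\trunc{\flat(X \to \flat\B^{k+1}\Rb)}_0$ have not been identified with external sheaf or singular cohomology, and the connecting maps $\varphi$ and $\rho$ have not been matched with their external counterparts. Your final paragraph flags this as the ``substantive step,'' but it is more than that: it is the entire mathematical content of the conjecture. Establishing the \v{C}ech-type internal/external comparison you gesture at, for a suitable internal class of types playing the role of manifolds, would not merely be a technical lemma on the way to the result --- it \emph{would be} the result, and is exactly what the paper leaves open.
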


 \subsection{Abstract Ordinary Differential Cohomology}\label{sec:abstract.ordinary.diff.coh}
  In the above sections, we constructed ordinary differential cohomology from the
  assumption of a long exact sequence of form classifiers. Apart from the concrete
  differential geometric input of the form classifiers, the construction was
  entirely abstract. In this section, we will describe the abstract ordinary
  differential cohomology theory from an axiomatic perspective. 

 The role of the form classifiers will be played by a \emph{contractible and
   infinitesimal resolution} of a crisp abelian group $C$. 

 \begin{defn}\label{defn:cir}
  Let $C$ be a crisp abelian group. A \emph{contractible
    and infinitesimal resolution} (CIR) of $C$ is a crisp long exact sequence
  \[
0 \to \flat C \to C \xto{d} C_1 \xto{d} C_2 \xto{d} \cdots
  \]
  where the $C_n$ are homotopically contractible --- $\shape C_n = \ast$ --- and where the kernels $Z_n :\equiv \ker(d : C_n
  \to C_{n +1})$ are infinitesimal --- $\flat Z_n = \ast$. We may think of $C_n$ as the
  abelian group of $n$-cochains, and $Z_n$ as the abelian group of $n$-cocycles.
 \end{defn}

 \begin{rmk}
   In an $\infty$-topos of sheaves of homotopy types, an abelian group $C$ in the
   empty context (which would therefore be crisp) is a sheaf of abelian groups.
   In this setting, we can understand a contractible and infinitesimal
   resolution of $C$ as presenting a cohomology theory on the site. The $C_n$
   are the sheaves of $n$-cochains, and the $Z_n$ the sheaves of $n$-cocycles.
   To suppose that the chain complex $d : C_n \to C_{n + 1}$ is exact is to say
   that representables have vanishing cohomology. To say that $Z_n$ is
   infinitesimal for $n > 0$ is to say that there is a unique $n$-cocycle on the
   terminal sheaf, namely $0$. To say that the $C_n$ are contractible may be
   understood as saying that for any two objects of the site, there is a
   homotopically unique \emph{concordance} between any $n$-cochains on them. 
 \end{rmk}

\begin{rmk}
It's likely that the generality could be pushed even further by taking $C$ to be
a spectrum and giving the following definition of a contractible and
infinitesimal resolution of
$C$:
\begin{itemize}
  \item Two sequences $C_n$ and $Z_n$ of spectra, $n \geq 0$, with $C_0 = C$ and
    $Z_0 = \flat C$. We may think of $C_n$ as the spectrum of $n$-cochains, and
    $Z_n$ as the spectrum of $n$-cocyles.
  \item Fiber sequences $Z_n \xto{i} C_n \xto{d} Z_{n+1}$ in which all maps $d$ are
    $\shape$-fibrations, and where $i_0 : Z_0 \to C_0$ is $(-)_{\flat} : \flat C
    \to C$.
  \item The $C_n$ are contractible, and the $Z_n$ are infinitesimal.
  \end{itemize}
This definition re-expresses the long exact sequence $C_{n-1} \xto{d} C_n
\xto{d} C_{n+1}$ in terms of the short exact sequences
\[0 \to Z_n \to C_n
\xto{d} Z_{n+1} \to 0\]
where $Z_n \equiv \ker(d : C_n \to C_{n+1})$. As we have no concrete examples at
this level of generality in mind, we leave the details of this
generalization to future work.
\end{rmk}

 For the rest of this section, we fix a crisp abelian group $C$ and an
 contractible and infinitesimal resolution of it. We can then prove analogues of
 the lemmas in the above sections. We begin by an analogue of \cref{thm:form.classifiers.contractible}.

 \begin{lem}
  Let $C$ be a crisp abelian group and $C_{\bullet}$ a contractible and
   infinitesimal resolution of $C$. Then $d : C \to Z_1$ is the infinitesimal
   remainder of $C$.
 \end{lem}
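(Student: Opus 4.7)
The plan is to extract from the contractible and infinitesimal resolution a short exact sequence of abelian groups $0 \to \flat C \to C \xto{d} Z_1 \to 0$ and deloop it, matching the result against the defining fiber sequence of the infinitesimal remainder.

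First I would factor $d : C \to C_1$ through $Z_1 = \ker(d : C_1 \to C_2)$. Exactness of the resolution at $C_1$ makes $C \to Z_1$ surjective, while exactness at $C$ identifies its kernel with the image of $\flat C \to C$, which is $(-)_{\flat}$. This produces the short exact sequence
\[
0 \to \flat C \xto{(-)_{\flat}} C \xto{d} Z_1 \to 0.
\]

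Next I would deloop. By \cref{lem:flat.preserves.groups}, $\flat C$ is a higher group with $\B \flat C \equiv \flat \B C$, and, crucially, the counit $(-)_{\flat} : \flat C \to C$ is delooped by the counit $(-)_{\flat} : \flat \B C \to \B C$. Applying $\B$ to the short exact sequence above (using \cref{lem:flat.preseres.exact.sequences}-style reasoning, just that deloopings of short exact sequences of groups are fiber sequences) yields the long fiber sequence
\[
\flat C \xto{(-)_{\flat}} C \xto{d} Z_1 \xto{\delta} \flat \B C \xto{(-)_{\flat}} \B C.
\]

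Finally I compare with the definition of the infinitesimal remainder $\lie{c} :\equiv \fib((-)_{\flat} : \flat \B C \to \B C)$. The tail of the fiber sequence above exhibits $Z_1$ together with $\delta$ as exactly this fiber, so there is a canonical equivalence $Z_1 \simeq \lie{c}$ under which $d : C \to Z_1$ corresponds to $\theta : C \to \lie{c}$. The only subtle point, and the one I would be careful about, is step two: making sure that the last map in the delooped fiber sequence really is $(-)_{\flat}$ rather than merely an abstract connecting map, which is precisely what \cref{lem:flat.preserves.groups} provides. Everything else is bookkeeping against the definitions of CIR and infinitesimal remainder.
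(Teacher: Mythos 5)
Your proof is correct and follows essentially the same route as the paper: extract the short exact sequence $0 \to \flat C \to C \xto{d} Z_1 \to 0$ from the CIR, deloop it into the long fiber sequence $C \xto{d} Z_1 \to \flat \B C \to \B C$, and match against the definition of the infinitesimal remainder as $\fib((-)_{\flat} : \flat \B C \to \B C)$. You are a bit more explicit than the paper about verifying that the delooped map really is the $\flat$-counit (invoking \cref{lem:flat.preserves.groups}), which is a reasonable thing to flag, but it is the same argument.
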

 \begin{proof}
   By hypothesis, we have a short exact sequence
   $$0 \to \flat C \to C \xto{d} Z_1 \to 0.$$
   We therefore have a long fiber sequence
   $$C \xto{d} Z_1 \to \flat \B C \to \B C$$
   which exhibits $d : C \to Z_1$ as the infinitesimal remainder of $C$. 
 \end{proof}

 \begin{thm}\label{thm:shape.of.cocycle.classifier}
   For $C$ a crisp abelian group and $C_{\bullet}$ a contractible and
   infinitesimal resolution of $C$, we have
   \[
   \shape Z_n = \flat \B^n \inftycover{C} .
\]
 \end{thm}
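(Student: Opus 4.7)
The plan is to induct on $n \geq 1$. For the base case $n = 1$, the preceding lemma identifies $d : C \to Z_1$ with the infinitesimal remainder $\lie{c}$ of $C$, so \cref{thm.main_theorem} applied to $G \equiv C$ immediately gives $\shape Z_1 = \shape \lie{c} = \flat \B \inftycover{C}$.

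For the inductive step, assume $\shape Z_n = \flat \B^n \inftycover{C}$. The short exact sequence $0 \to Z_n \to C_n \xto{d} Z_{n+1} \to 0$ yields a fiber sequence $Z_n \to C_n \xto{d} Z_{n+1}$, to which I want to apply $\shape$. Since $d$ is a homomorphism of abelian groups, all of its fibers are identifiable with the single crisp type $Z_n$, so by the ``good fibrations'' trick (Theorem 6.1 of \cite{Jaz:Good.Fibrations}) $d$ is a $\shape$-fibration and $\shape$ preserves its fiber. Because $C_n$ is contractible, this yields $\shape Z_n = \Omega \shape Z_{n+1}$.

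The main obstacle is to promote this loop identification to a delooping identification $\shape Z_{n+1} = \B \shape Z_n$; for that I must verify that $\shape Z_{n+1}$ is $0$-connected. I plan to mimic the connectivity argument used in \cref{thm:form.classifiers.contractible}: given $t : \shape Z_{n+1}$, surjectivity of the $\shape$-unit (Corollary 9.12 of \cite{Shulman:Real.Cohesion}) merely produces $z : Z_{n+1}$ with $z^{\shape} = t$; exactness of the resolution makes $d : C_n \to Z_{n+1}$ surjective, so there merely exists $c : C_n$ with $dc = z$; then $t = \shape d(c^{\shape})$ is merely the basepoint because $\shape C_n = \ast$. This is the only step where the three hypotheses (exactness of the resolution, contractibility of $C_n$, and the surjectivity property of the $\shape$-unit) must be combined delicately.

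Once $\shape Z_{n+1}$ is $0$-connected, $\shape Z_{n+1} = \B \shape Z_n$, and \cref{lem:flat.preserves.groups} applied to the crisp higher group $\flat \B^{n-1} \inftycover{C}$ gives $\B(\flat \B^n \inftycover{C}) = \flat \B^{n+1} \inftycover{C}$, closing the induction.
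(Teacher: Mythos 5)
Your proof is correct and takes essentially the same path as the paper's, whose own proof simply refers back to \cref{thm:form.classifiers.contractible}: base case via the infinitesimal remainder, inductive step via the short exact sequences $0 \to Z_n \to C_n \xto{d} Z_{n+1} \to 0$ and the ``good fibrations'' trick, contractibility of $C_n$ to obtain $\Omega\shape Z_{n+1} = \shape Z_n$, and a connectivity argument to recognize the delooping. You streamline the connectivity step slightly by proving only $0$-connectivity rather than the $n$-connectivity the paper aims for; this is indeed sufficient, since a $0$-connected pointed type is uniquely determined as the delooping of its loop group. One small slip in the final sentence: \cref{lem:flat.preserves.groups} should be instantiated at $G \equiv \B^n\inftycover{C}$, giving $\B\flat\B^n\inftycover{C} = \flat\B^{n+1}\inftycover{C}$; instantiating at $\flat\B^{n-1}\inftycover{C}$ as written inserts an extra $\flat$ and does not directly yield the identity you use.
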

 \begin{proof}
   The same as the proof of \cref{thm:form.classifiers.contractible}.
 \end{proof}

 We now define analogues of the ordinary differential geometry classifiers
 $\B^n\B^k_{\nabla} U(1)$.

 \begin{defn}\label{defn:abstract.diff.coh}
   For $n, k \geq 0$, define $\B^n D_k$ to be the following pullback:
\[\begin{tikzcd}
	{\B^nD_k} & {\B^nZ_{k+1}} \\
	{\shape\B^{n + k} C} & {\flat\B^{n + k+1}\inftycover{C}}
	\arrow[from=1-1, to=1-2, "\B^n F_{-}"]
	\arrow[from=1-1, to=2-1]
	\arrow[from=2-1, to=2-2]
	\arrow[from=1-2, to=2-2, "(-)^{\shape}"]
	\arrow["\lrcorner"{anchor=center, pos=0.125}, draw=none, from=1-1, to=2-2]
\end{tikzcd}\]
We refer to $F_{(-)} : D_k \to Z_{k + 1}$ as the \emph{curvature}.
 \end{defn}
 
We begin by noting that $D_0$ is simply $C$. This went without saying before; we refrained from defining $\B^0_{\nabla}U(1)$, but if we had it would have
been $U(1)$.
 \begin{lem}
   As abelian groups, $D_0 = C$.
 \end{lem}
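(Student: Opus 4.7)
The plan is to unfold the defining pullback of $D_0$ from \cref{defn:abstract.diff.coh} and recognize it as the right-hand pullback square of the modal fracture hexagon (\cref{thm.main_theorem}) applied to $C$. Setting $n = k = 0$ in \cref{defn:abstract.diff.coh} presents $D_0$ as the pullback of
\[
\shape C \longrightarrow \flat \B \inftycover{C} \longleftarrow Z_1,
\]
where the right leg is the $\shape$-unit $(-)^{\shape} : Z_1 \to \flat \B \inftycover{C}$. By the immediately preceding lemma, $d : C \to Z_1$ is the infinitesimal remainder $\theta : C \to \lie{c}$ of the crisp abelian group $C$, so we may identify $Z_1 = \lie{c}$; and by the last clause of \cref{thm.main_theorem}, $\shape \lie{c} = \flat \B \inftycover{C}$, matching the bottom-right corner.

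Next I would invoke the right-hand pullback square of the modal fracture hexagon for $C$ itself (viewed as a crisp higher abelian group). \cref{thm.main_theorem} asserts that
\[
\begin{tikzcd}
C \ar[r, "\theta"] \ar[d, "(-)^{\shape}"'] & \lie{c} \ar[d, "(-)^{\shape}"] \\
\shape C \ar[r, "\shape\theta"'] & \shape \lie{c}
\end{tikzcd}
\]
is a pullback. Under the identifications just made, this square is literally the cospan defining $D_0$, with $C$ sitting at its apex. By the universal property of pullbacks, the induced comparison map $C \to D_0$ is an equivalence.

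Finally, I would observe that every map appearing in the square is a homomorphism of higher abelian groups ($d$, the $\shape$-unit, and the canonical map $\shape C \to \flat \B \inftycover{C}$ coming from the bottom of the hexagon), so the pullback inherits its abelian group structure and the equivalence $C \simeq D_0$ is one of abelian groups. I do not anticipate any real obstacle; the entire content of the lemma is the bookkeeping of matching two pullback cospans, and the substantive input is already isolated in \cref{thm.main_theorem} and the preceding lemma identifying $Z_1$ with the infinitesimal remainder of $C$.
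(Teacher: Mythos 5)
Your proposal is correct and follows essentially the same route as the paper: the paper likewise unfolds the defining pullback of $D_0$, identifies $Z_1$ as the infinitesimal remainder of $C$ via the preceding lemma, and invokes the right-hand pullback square of the modal fracture hexagon (\cref{thm.main_theorem}) to conclude that $C$ is the pullback of the same cospan. You are somewhat more explicit than the paper --- in particular in spelling out the identification $\shape \lie{c} = \flat\B\inftycover{C}$ and in noting that the resulting comparison map $C \to D_0$ respects the abelian group structure --- but the substance is identical.
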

 \begin{proof}
   The defining pullback of $D_0$ is
   
\[\begin{tikzcd}
	{D_0} & {Z_1} \\
	{\shape C} & {\flat\B^{1}\inftycover{C}}
	\arrow[from=1-1, to=1-2, "F_{-}"]
	\arrow[from=1-1, to=2-1]
	\arrow[from=2-1, to=2-2]
	\arrow[from=1-2, to=2-2, "(-)^{\shape}"]
	\arrow["\lrcorner"{anchor=center, pos=0.125}, draw=none, from=1-1, to=2-2]
\end{tikzcd}\]
But $Z_1$ is the infinitesimal remainder of $C$, so the right square in the
modal fracture hexagon of $C$ shows that $C$ is the pullback of the same diagram.
 \end{proof}

We can prove an analogue of \cref{lem:diff.coh.shape.naturality}.
 \begin{lem}
   The defining diagram 
\[\begin{tikzcd}
	{\B^nD_k} & {\B^nZ_{k+1}} \\
	{\shape\B^{n + k} C} & {\flat\B^{n + k+1}\inftycover{C}}
	\arrow[from=1-1, to=1-2, "\B^n F_{-}"]
	\arrow[from=1-1, to=2-1]
	\arrow[from=2-1, to=2-2]
	\arrow[from=1-2, to=2-2, "(-)^{\shape}"]
	\arrow["\lrcorner"{anchor=center, pos=0.125}, draw=none, from=1-1, to=2-2]
\end{tikzcd}\]
is a $\shape$-naturality square. In particular, $\shape D_k = \shape \B^k C$.
 \end{lem}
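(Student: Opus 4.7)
The plan is to show $\shape$-connectedness of the left-hand vertical map; once we know the target $\shape \B^{n+k}C$ is $\shape$-modal, this identifies that map with the $\shape$-unit. The key tool is that, for crisp higher groups, $\shape$ commutes with delooping (via Theorem 8.6 of \cite{Jaz:Good.Fibrations}), combined with our computation in \cref{thm:shape.of.cocycle.classifier} that $\shape Z_{n} = \flat \B^{n}\inftycover{C}$.

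First I would observe that since the defining diagram is a pullback, the fibers of $\B^n D_k \to \shape \B^{n+k} C$ over any point are (merely) equivalent to the fibers of the right-hand vertical map
\[
  (-)^{\shape} : \B^n Z_{k+1} \to \flat \B^{n+k+1} \inftycover{C}.
\]
So it suffices to prove that this latter map is itself a $\shape$-unit, as fibers of $\shape$-units are $\shape$-connected and therefore the same holds after pullback.

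Next, to identify this map as a $\shape$-unit, I would compute $\shape \B^n Z_{k+1}$. Since $Z_{k+1} \equiv \ker(d : C_{k+1} \to C_{k+2})$ is a crisp abelian group, it admits deloopings $\B^n Z_{k+1}$ to all orders, each of which is crisp and $(n-1)$-connected. The shape of a crisp $m$-connected type is $m$-connected (Theorem 8.6 of \cite{Jaz:Good.Fibrations}), which lets us repeatedly identify $\shape \B^j Z_{k+1}$ with a delooping of $\shape \B^{j-1} Z_{k+1}$. Iterating and applying \cref{thm:shape.of.cocycle.classifier} gives
\[
  \shape \B^n Z_{k+1} \;=\; \B^n \shape Z_{k+1} \;=\; \B^n \flat \B^{k+1}\inftycover{C} \;=\; \flat \B^{n+k+1}\inftycover{C},
\]
where in the last step I use that $\flat$ commutes with delooping of crisp higher groups (\cref{lem:flat.preserves.groups}, applied iteratively to $\inftycover{C}$, which is $k$-commutative whenever $C$ is since $\inftycover{(-)}$ is constructed by fiber and preserves commutativity as in \cref{prop:top.sequence}). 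Under these identifications the map $(-)^{\shape}$ on $\B^n Z_{k+1}$ is precisely the $\shape$-unit, so its fibers are $\shape$-connected.

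Combining these two steps, the left-hand vertical map $\B^n D_k \to \shape \B^{n+k} C$ has $\shape$-connected fibers and lands in a $\shape$-modal type, so it is a $\shape$-unit; equivalently, the square is a $\shape$-naturality square. Setting $n = 0$ immediately yields the ``in particular'' clause $\shape D_k = \shape \B^k C$. The main subtlety to watch for is the iterated use of commutativity of $\shape$ with delooping, which needs both the crispness of $Z_{k+1}$ and enough commutativity of $\inftycover{C}$ to form the higher deloopings on the right-hand side; as noted, this is inherited from the abelian group structure on $C$.
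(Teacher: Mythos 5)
Your proof is correct and takes essentially the same approach as the paper: use that $\shape\B^{n+k}C$ is $\shape$-modal, reduce to $\shape$-connectedness of the fibers of the left vertical map, and identify these via the pullback with the fibers of the right vertical map. The paper simply asserts that the right vertical arrow is the $\shape$-unit (as indicated by the notation $(-)^{\shape}$), whereas you fill in the useful detail of verifying $\shape\B^n Z_{k+1} = \flat\B^{n+k+1}\inftycover{C}$ by iterating the crisp commutation of $\shape$ and $\flat$ with delooping together with \cref{thm:shape.of.cocycle.classifier}.
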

 \begin{proof}
   Since $\shape \B^{n + k} C$ is discrete, it suffices to show that the fibers
   of $\B^n D_k \to \shape B^{n + k} C$ are $\shape$-connected. But they are
   equivalent to the fibers of $(-)^{\shape} : \B^n Z_{k + 1} \to \flat \B^{n + k +
     1} \inftycover{C}$, which are $\shape$-contractible.
 \end{proof}

 As a corollary, we can deduce an analogue of \cref{prop:universal.infty.cover.diff.coh}.
\begin{cor}
  The curvature $F_{(-)} : D_k \to Z_{k + 1}$ induces an equivalence on
  universal $\infty$-covers: $\inftycover{D}_k = \inftycover{Z}_{k + 1}$.
\end{cor}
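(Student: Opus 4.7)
The plan is to read off the equivalence directly from the defining pullback square of $D_k$, using the preceding lemma and \cref{thm:shape.of.cocycle.classifier} to identify the vertical maps as $\shape$-units.

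First I would invoke the preceding lemma, which tells us that the map $D_k \to \shape \B^k C$ is the $\shape$-unit of $D_k$; in particular $\shape D_k \simeq \shape \B^k C$. Next I would invoke \cref{thm:shape.of.cocycle.classifier}, which identifies the right vertical of the defining pullback as the $\shape$-unit of $Z_{k+1}$, with $\shape Z_{k+1} \simeq \flat \B^{k+1}\inftycover{C}$. Rewriting the defining pullback of $D_k$ with these identifications, the square becomes
\[
\begin{tikzcd}
D_k \ar[r, "F_{(-)}"] \ar[d, "(-)^{\shape}"'] & Z_{k+1} \ar[d, "(-)^{\shape}"] \\
\shape D_k \ar[r, "\shape F_{(-)}"'] & \shape Z_{k+1}
\end{tikzcd}
\]
so that $F_{(-)}$ is exhibited as a $\shape$-\'etale map, i.e.\ an $\infty$-cover, in the sense of the definition preceding \cref{thm:Good.Fibrations.Etale}.

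From here the equivalence of universal $\infty$-covers is immediate: since the square above is a pullback, taking vertical fibers over the basepoints yields an equivalence
\[
\inftycover{D_k} \equiv \fib((-)^{\shape}:D_k \to \shape D_k) \;\simeq\; \fib((-)^{\shape}:Z_{k+1}\to \shape Z_{k+1}) \equiv \inftycover{Z_{k+1}},
\]
induced by $F_{(-)}$. There is no real obstacle; the only thing to watch is that the basepoints match, which is automatic because every map in the pullback square is pointed and the pullback is formed pointwise. This is really just the general fact, already used implicitly in \cref{cor:inf_remainder_infty_cover}, that an $\infty$-cover induces an equivalence on universal $\infty$-covers.
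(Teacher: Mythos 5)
Your proof is correct and takes essentially the same approach as the paper's: observe that the defining pullback square has $\shape$-units as its vertical maps (the left by the preceding lemma, the right by \cref{thm:shape.of.cocycle.classifier}), and then take vertical fibers to get the equivalence of universal $\infty$-covers. You spell out the $\shape$-\'etale reading of the square a bit more explicitly than the paper does, but the argument is the same.
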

\begin{proof}
  The defining pullback
\[\begin{tikzcd}
	{D_k} & {Z_{k+1}} \\
	{\shape\B^{k} C} & {\flat\B^{k+1}\inftycover{C}}
	\arrow[from=1-1, to=1-2, "F_{-}"]
	\arrow[from=1-1, to=2-1]
	\arrow[from=2-1, to=2-2]
	\arrow[from=1-2, to=2-2, "(-)^{\shape}"]
	\arrow["\lrcorner"{anchor=center, pos=0.125}, draw=none, from=1-1, to=2-2]
\end{tikzcd}\]
induces an equivalence on the fibers of the vertical maps. Since these maps are
$\shape$-units, the fibers are by definition the respective universal $\infty$-covers.
\end{proof}

 As with $\B^k_{\nabla}U(1)$, we may see $D_k$ as equipping $k$-gerbes with band
 $C$ with cocycle data coming from $Z_{k + 1}$. We have the analogue of Diagram \ref{eqn.big_diff_coh_diagram}:
 \begin{equation}\label{eqn:big.diff.coh.diagram.abstract}
\begin{tikzcd}
	{D_k} & {Z_{k + 1}} \\
	{\B D_k} & {\B Z_k} \\
	\vdots & \vdots \\
	{\B^{k-1} D_1} & {\B^{k-1}Z_2} \\
	{\B^k C} & {\B^k Z_1} \\
	{\shape\B^{k}C} & {\flat \B^{k+1}\Rb}
	\arrow[from=5-1, to=6-1]
	\arrow[from=5-1, to=5-2]
	\arrow[from=5-2, to=6-2]
	\arrow[from=6-1, to=6-2]
	\arrow["\lrcorner"{anchor=center, pos=0.125}, draw=none, from=5-1, to=6-2]
	\arrow[from=4-1, to=5-1]
	\arrow[from=4-1, to=4-2]
	\arrow[from=4-2, to=5-2]
	\arrow["\lrcorner"{anchor=center, pos=0.125}, draw=none, from=4-1, to=5-2]
	\arrow[from=1-1, to=2-1]
	\arrow[from=1-1, to=1-2]
	\arrow[from=1-2, to=2-2]
	\arrow[from=2-1, to=2-2]
	\arrow["\lrcorner"{anchor=center, pos=0.125}, draw=none, from=1-1, to=2-2]
	\arrow[from=2-2, to=3-2]
	\arrow[from=3-2, to=4-2]
	\arrow[from=2-1, to=3-1]
	\arrow[from=3-1, to=4-1]
\end{tikzcd}
\end{equation}

By applying $\flat$ to this diagram and recalling that the $Z_i$ (and therefore
their deloopings) are infinitesimal, we see that $\flat D_k = \flat \B^k C$. We
can use a composite square from this diagram to prove an analogue of \cref{lem.curvature}.

\begin{lem}
The fiber of the curvature $F_{(-)} : D_k \to Z_{k+1}$ is $\flat \B^k C$. We are
therefore justified in seeing the curvature as an obstruction to the flatness of
the underlying gerbe.
\end{lem}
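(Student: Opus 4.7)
The plan is to imitate the proof of \cref{lem.curvature} in the abstract setting. Diagram \ref{eqn:big.diff.coh.diagram.abstract} provides a tower of pullback squares whose top square involves the defining pullback of $D_k$ and whose bottom square is the right pullback square of the modal fracture hexagon of $C$ (delooped $k$ times, using that $C$ is abelian and hence arbitrarily commutative). Pasting all these pullback squares together yields the composite pullback
\[\begin{tikzcd}
D_k \ar[d] \ar[r, "F_{(-)}"] & Z_{k+1} \ar[d] \\
\B^k C \ar[r, "\B^k \theta"'] & \B^k Z_1
\end{tikzcd}\]
where $\theta = d : C \to Z_1$ is the infinitesimal remainder of $C$, identified in the preceding lemma of this section.

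Horizontal fibers in a pullback square are equivalent, so the fiber of $F_{(-)} : D_k \to Z_{k+1}$ agrees with the fiber of $\B^k \theta : \B^k C \to \B^k Z_1$. To compute the latter, I would extend the defining fiber sequence $\flat C \to C \xto{\theta} Z_1$ of the infinitesimal remainder:
\[
\flat C \to C \xto{\theta} Z_1 \to \flat \B C \to \B C \to \B Z_1 \to \cdots \to \flat \B^k C \to \B^k C \xto{\B^k \theta} \B^k Z_1,
\]
using \cref{lem:flat.preserves.groups} at each stage to identify the loop space of $\flat \B^{i+1} C$ with $\flat \B^i C$. The fiber of $\B^k \theta$ is therefore $\flat \B^k C$, as required.

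The main obstacle is verifying that Diagram \ref{eqn:big.diff.coh.diagram.abstract} genuinely assembles as a tower of pullback squares in this abstract setting. This is a direct translation of the corresponding argument for ordinary differential cohomology: the short exact sequences $0 \to Z_n \to C_n \xto{d} Z_{n+1} \to 0$ take the place of the analogous sequences of form classifiers, and the contractibility $\shape C_n = \ast$ together with the infinitesimality $\flat Z_n = \ast$ ensure that the connecting maps $Z_{n+1} \to \B Z_n$ behave as required. Once the tower of pullbacks is in place, the conclusion follows purely formally from pullback pasting and the long fiber sequence of the infinitesimal remainder.
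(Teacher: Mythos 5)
Your proof is correct, but it takes a longer route than the paper's. The paper's argument uses the \emph{defining} pullback of $D_k$ directly: since horizontal fibers in a pullback agree, the fiber of $F_{(-)}$ is the fiber of $\shape \B^k C \to \flat \B^{k+1} \inftycover{C}$, which is $\flat \B^k C$ by the bottom fiber sequence of the modal fracture hexagon of $\B^k C$ (\cref{thm.main_theorem}, applied to $G = \B^k C$, recalling $\shape\lie{g} = \flat\B\inftycover{G} = \flat\B^{k+1}\inftycover{C}$). Your proof instead pastes the upper squares of the tower (Diagram \ref{eqn:big.diff.coh.diagram.abstract}) to reduce to the fiber of $\B^k\theta : \B^k C \to \B^k Z_1$, then unwinds the long fiber sequence of the infinitesimal remainder of $C$ to get $\flat\B^k C$. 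This is precisely the translation to the abstract setting of the paper's proof of the \emph{concrete} \cref{lem.curvature}; in the abstract section the paper switches to the shorter argument because the defining pullback already presents $D_k$ sitting directly over $\shape\B^k C$ and $\flat\B^{k+1}\inftycover{C}$, so there is no tower to paste. Your approach buys nothing that the shorter one doesn't, and it leans on the correctness of Diagram \ref{eqn:big.diff.coh.diagram.abstract} as a tower of pullbacks, which the paper states by analogy but does not explicitly verify in the abstract case (you rightly flag this as the main obstacle). That said, the verification is routine given \cref{thm:shape.of.cocycle.classifier} and the short exact sequences $0 \to Z_n \to C_n \to Z_{n+1} \to 0$, so there is no gap.
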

\begin{proof}
The defining pullback

\[\begin{tikzcd}
	{D_k} & {Z_{k+1}} \\
	{\shape\B^{k} C} & {\flat\B^{k+1}\inftycover{C}}
	\arrow[from=1-1, to=1-2, "F_{-}"]
	\arrow[from=1-1, to=2-1]
	\arrow[from=2-1, to=2-2]
	\arrow[from=1-2, to=2-2, "(-)^{\shape}"]
	\arrow["\lrcorner"{anchor=center, pos=0.125}, draw=none, from=1-1, to=2-2]
\end{tikzcd}\]
induces an equivalence on the fibers of the horizontal maps. But the fiber of
$\shape \B^k C \to \flat \B^{k + 1} \inftycover{C}$ is $\flat \B^k C$.
\end{proof}
\begin{cor}
The curvature $F_{(-)} : D_k \to Z_{k + 1}$ is a unit for the $( k - 1
)$-truncation modality.
\end{cor}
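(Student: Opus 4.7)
The plan is to mirror exactly the corresponding argument for $\B^k_{\nabla} U(1)$. To show that a map into an $n$-type is a unit for the $n$-truncation modality, it suffices to show that the codomain is $n$-truncated and that the map is $n$-connected. Here, the codomain $Z_{k+1}$ is a crisp abelian group, so it is a set, hence $(k-1)$-truncated for any $k \geq 1$. So the substantive content is to verify that $F_{(-)} : D_k \to Z_{k+1}$ is $(k-1)$-connected.

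First I would apply the preceding lemma, which identifies the fiber of $F_{(-)}$ over any $\omega : Z_{k+1}$ with $\flat \B^k C$. Since $(k-1)$-connectedness is a fiberwise property, it suffices to show that $\flat \B^k C$ is $(k-1)$-connected.

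The key step is to observe that $\flat \B^k C = \B^k \flat C$. This follows by iterating \cref{lem:flat.preserves.groups}: each $\B^i \flat C$ remains a crisp higher group (being the $i$-fold delooping of the crisp abelian group $C$, transported across the equivalence $\flat \B G = \B \flat G$ at each stage), so the lemma applies again to produce $\B^{i+1} \flat C = \flat \B^{i+1} C$. Once we know $\flat \B^k C = \B^k \flat C$, we are done: $\flat C$ is a $0$-type, so its $k$-fold delooping $\B^k \flat C$ is $(k-1)$-connected.

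I do not expect any real obstacle here; the argument is essentially a transcription of the one given earlier, with \cref{lem:flat.preserves.groups} doing the work that was previously implicit in the concrete identification of fibers with $\flat \B^k U(1)$. The only thing to be careful about is that the inductive identification $\flat \B^k C = \B^k \flat C$ requires each intermediate type to remain crisp, which is automatic since $C$ was crisp to begin with and deloopings of crisp types are crisp.
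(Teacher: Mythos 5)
Your proof is correct and follows essentially the same structure as the paper's argument for the concrete case of $\B^k_{\nabla}U(1)$: observe the codomain is a set hence $(k-1)$-truncated, reduce to showing the map is $(k-1)$-connected, and then use the preceding lemma to identify the fibers with $\flat\B^k C$. The one place you do more work than the paper is in justifying why $\flat\B^k C$ is $(k-1)$-connected. The paper simply asserts the analogous fact for $\flat\B^k U(1)$; the most direct justification is that $\flat$ preserves $n$-connectedness of crisp types (this is Corollary~6.7 of \cite{Shulman:Real.Cohesion}, which the paper already invokes inside the proof of \cref{lem:flat.preserves.groups} to show $\flat\B G$ is $0$-connected, and can be applied at any truncation level). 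Your route — iterating \cref{lem:flat.preserves.groups} to get $\flat\B^k C = \B^k\flat C$ and then reading off connectedness of a $k$-fold delooping — is valid and buys you an explicit identification of the fiber that the paper's argument does not need; it is a fine alternative, and the crispness bookkeeping you flag is indeed automatic.
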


Finally, we record an analogue to \cref{lem:inf.remainder.curvature}. 
\begin{lem}
  The infinitesimal remainder of $D_k$ is the curvature $F_{(-)} : D_k \to Z_{k
    + 1}$.
\end{lem}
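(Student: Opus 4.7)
The plan is to mirror almost verbatim the cube argument used in \cref{lem:inf.remainder.curvature}. First, I would deloop the defining pullback of \cref{defn:abstract.diff.coh} once to obtain a crisp pullback square whose corners are $\B D_k$, $\B Z_{k+1}$, $\shape \B^{k+1} C$, and $\flat \B^{k+2} \inftycover{C}$. Applying the left exact comodality $\flat$ to this square yields a second pullback square sitting parallel to the first.

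The $\flat$'d square admits the following simplifications of its corners. By hypothesis $Z_{k+1}$ is infinitesimal, so $\flat Z_{k+1} = \ast$; since $Z_{k+1}$ is a crisp abelian group, \cref{lem:flat.preserves.groups} identifies $\flat \B Z_{k+1}$ as a delooping of $\flat Z_{k+1} = \ast$, hence $\flat \B Z_{k+1} = \ast$. The corner $\flat \shape \B^{k+1} C$ simplifies to $\shape \B^{k+1} C$ because discrete crisp types are $\flat$-modal by \cref{ax:unity_of_opposites}, and $\flat\flat \B^{k+2} \inftycover{C}$ simplifies to $\flat \B^{k+2} \inftycover{C}$ by the idempotence of $\flat$.

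Next, I would assemble the cube whose front face is the delooped defining pullback, whose parallel face is the $\flat$'d version, and whose back face has the fibers of the $\flat$-counits as entries. By \cref{lem:flat_fiber_sequenes} these fibers assemble into a commuting cube whose vertical edges are fiber sequences. Since the fiber of $\flat \B Z_{k+1} = \ast \to \B Z_{k+1}$ is $\Omega \B Z_{k+1} = Z_{k+1}$, the top-right entry of the back face is $Z_{k+1}$. The front face is a pullback by construction and the $\flat$'d face is a pullback since $\flat$ is left exact, so the back face is also a pullback. Its bottom edge is $\ast \to \ast$, which forces the top edge $\flat_{\text{dR}} \B D_k \to Z_{k+1}$ to be an equivalence. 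By definition, this exhibits $Z_{k+1}$ as the infinitesimal remainder of $D_k$.

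To conclude, I would extend the cube by one further fiber step, exactly as in the second diagram of the proof of \cref{lem:inf.remainder.curvature}, in order to check that under the identification above the infinitesimal remainder map $\theta : D_k \to \flat_{\text{dR}} \B D_k$ coincides with the curvature $F_{(-)} : D_k \to Z_{k+1}$: both arise as the induced map on fibers of the compatible vertical pullback squares. The only real obstacle is the diagrammatic book-keeping within the cube; there is no new mathematical content beyond what was used in the circle-gerbe case, since each ingredient used there (infinitesimality of the cocycle classifier, left exactness of $\flat$, and the defining pullback being a $\shape$-naturality square) has an exact analogue in our axiomatic setting.
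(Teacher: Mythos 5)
Your proposal is correct and follows the paper's intended route: the paper's own proof reads, in full, ``Exactly as \cref{lem:inf.remainder.curvature},'' and you have spelled out precisely the cube argument that reference would unfold, with every ingredient (infinitesimality of $Z_{k+1}$, left exactness of $\flat$, the defining pullback, and the final fiber-step identifying $\theta$ with $F_{(-)}$) transposed correctly to the abstract setting.
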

\begin{proof}
Exactly as \cref{lem:inf.remainder.curvature}.
\end{proof}

We may put these results together to find the modal fracture hexagon of $D_k$.

\begin{thm}
The modal fracture hexagon of $D_k$ (\cref{defn:abstract.diff.coh}) is:
\begin{equation}\label{eqn:abstract.hexagon}
    \begin{tikzcd}
      & \inftycover{Z_{k+1}} \ar[dr, "\pi"] \ar[rr] & & Z_{k+1} 
      \ar[dr,  "(-)^{\shape}"] \\
      \flat \B^k \inftycover{C} \ar[ur, "(-)_{\flat}"] \ar[dr] & &
       D_k \ar[dr, "(-)^{\shape}"'] \ar[ur, "F_{(-)}"] & & \flat\B^{k+1}\inftycover{C} \\
      & \flat \B^k C \ar[rr] \ar[ur, "(-)_{\flat}"'] & & \shape\B^{k}C \ar[ur] &
    \end{tikzcd}
    \end{equation}
  \end{thm}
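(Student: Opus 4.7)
The plan is to instantiate Theorem \ref{thm.main_theorem} with the crisp higher group $G = D_k$ (crisp because it is defined as a pullback of crisp types) and then identify each vertex of the resulting abstract hexagon with the concrete type appearing in the statement. Four of the six vertices have already been identified in this subsection: the infinitesimal remainder of $D_k$ is $Z_{k+1}$ via the curvature $F_{(-)}$; the universal $\infty$-cover $\inftycover{D_k}$ is $\inftycover{Z_{k+1}}$; the shape is $\shape D_k = \shape \B^k C$ via the $\shape$-naturality lemma; and $\shape Z_{k+1} = \flat \B^{k+1} \inftycover{C}$ is Theorem \ref{thm:shape.of.cocycle.classifier}. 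So it remains to identify $\flat D_k$ and $\flat \inftycover{D_k}$.

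For $\flat D_k = \flat \B^k C$, the plan is to apply $\flat$ to the tower of pullback squares in diagram \ref{eqn:big.diff.coh.diagram.abstract}. Each square in that tower has its right-hand side of the form $\B^i Z_{k-i+1}$. Since each $Z_n$ is infinitesimal and $\flat$ commutes with delooping (Lemma \ref{lem:flat.preserves.groups}), each $\B^i Z_{k-i+1}$ also becomes contractible under $\flat$. Since $\flat$ preserves pullbacks, each square becomes a pullback of the form $\flat \B^i D_{k-i} \to * \leftarrow *$, making the vertical map $\flat \B^i D_{k-i} \to \flat \B^{i+1} D_{k-i-1}$ an equivalence. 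Composing up the tower yields $\flat D_k \simeq \flat \B^k D_0 = \flat \B^k C$, where the last equality uses the already-proven $D_0 = C$.

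For $\flat \inftycover{D_k} = \flat \B^k \inftycover{C}$, the plan is to start from the fiber sequence $\inftycover{Z_{k+1}} \to Z_{k+1} \to \shape Z_{k+1}$ defining the universal $\infty$-cover and apply $\flat$, which preserves fiber sequences by Lemma \ref{lem:flat_fiber_sequenes}. Since $Z_{k+1}$ is infinitesimal we have $\flat Z_{k+1} = \ast$, and by Theorem \ref{thm:shape.of.cocycle.classifier} together with idempotence of $\flat$ we have $\flat \shape Z_{k+1} = \flat \B^{k+1} \inftycover{C}$. The fiber sequence then collapses to give $\flat \inftycover{Z_{k+1}} = \Omega \flat \B^{k+1} \inftycover{C} = \flat \B^k \inftycover{C}$, and this agrees with $\flat \inftycover{D_k}$ since the two universal $\infty$-covers coincide.

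With all six vertices identified, the three structural claims of Theorem \ref{thm.main_theorem} (middle diagonals being fiber sequences, top and bottom rows being fiber sequences, and both squares being pullbacks) transport directly to the diagram in \ref{eqn:abstract.hexagon}. The only point requiring real care is the first identification above, since it relies on infinitesimality propagating through arbitrarily many deloopings; the rest is bookkeeping built on the lemmas already established in this subsection.
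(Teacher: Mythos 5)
Your proposal is correct and takes essentially the same route as the paper, which presents this theorem as a direct assembly of the preceding lemmas in the subsection specialized to $G = D_k$ in Theorem~\ref{thm.main_theorem}. The two identifications you spell out in detail — $\flat D_k = \flat \B^k C$ via applying left-exact $\flat$ to the tower \eqref{eqn:big.diff.coh.diagram.abstract} and killing the infinitesimal right column, and $\flat \inftycover{D_k} = \flat \B^k \inftycover{C}$ via $\flat$ of the defining fiber sequence of $\inftycover{Z_{k+1}}$ — match the paper's remarks (the first is stated in the prose following that diagram; the second also falls out of the main theorem's $\shape\lie{g} = \flat\B\inftycover{G}$ combined with Theorem~\ref{thm:shape.of.cocycle.classifier}), and your caution about infinitesimality persisting through deloopings is resolved by Lemma~\ref{lem:flat.preserves.groups}.
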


\subsection{A combinatorial analogue of differential cohomology}\label{sec:combinatorial.analogues}

Our arguments in the preceeding sections have been abstract and modal in
character. This abstraction means that we can apply these arguments in settings
other than differential geometry. In this subsection, we will sketch a
combinatorial analogue of differential cohomology taking place in the cohesive
$\infty$-topos of symmetric simplicial homotopy types. We will
mix internal and external reasoning in sketching the setup. We will give a fuller ---
and properly internal ---
exploration of symmetric simplicial cohesion in future work.

A symmetric simplicial homotopy type $S$ is an $\infty$-functor $X : \textbf{Fin}_{>0} \to
\textbf{H}$ from the category of non-empty finite sets into the
$\infty$-category of homotopy types. These are the unordered analogue of
simplicial homotopy types.

The $\infty$-topos of symmetric simplicial homotopy types is cohesive. The modalities operate on a symmetric simplicial
homotopy type $X$ in the following ways:
\begin{itemize}
  \item $\flat X$ is the discrete ($0$-skeletal) inclusion of $X_0 \equiv X([0])$ the
    homotopy type of $0$-simplices in $X$.
  \item $\shape X$ is the discrete inclusion of the \emph{geometric realization}
    (or colimit) of $X$.
  \item $\sharp X$ is the codiscrete ($0$-coskeletal) inclusion of $X_0$.
\end{itemize}

We have thus far avoided using the codiscrete modality $\sharp$ in this paper,
but it plays a crucial role in this section. This is because the $n$-simplex
$\Delta[n]$ may be defined to be the codiscrete reflection of the $(n +
1)$-element set $[n] \equiv \{0, \ldots, n\}$.
$$\Delta[n] :\equiv \sharp[n].$$
We may therefore axiomatize symmetric simplical cohesion internally with the
following axiom:
\begin{axiom}[Symmetric Simplicial Cohesion]
A crisp type $X$ is crisply discrete if and only if it is $\sharp[n]$-local for
all $n$.
\end{axiom}

We may therefore define $\shape = \text{Loc}_{\{\sharp[n]\mid n : \Nb\}}$ to be
the localization at the simplices, and the Symmetric Simplicial Cohesion axiom
will ensure that $\shape$ is adjoint to $\flat$ as required by the Unity of
Opposites axiom.

In his paper \cite{Lawvere:Codiscrete.Cohesion}, Lawvere points out that the simplices $\Delta[n]$ are
\emph{tiny}, much like the infinitesimal disks in synthetic differential
geometry. $\Delta[n]$ being tiny means the functor $(-)^{\Delta[n]}$ admits an
\emph{external right} adjoint. We may refer to this adjoint as
$(-)^{\frac{1}{\Delta[n]}}$, following Lawvere. If $C$ is a crisp codiscrete abelian
group, then the external adjointness shows that maps $X \to C^{\frac{1}{\Delta[n]}}$
correspond to maps $X^{\Delta[n]} \to C$, which, since $C$ is codiscrete,
correspond to maps $\flat X^{\Delta_n} \to C$; but $\flat X^{\Delta_n} =
X^{\Delta_n}([0]) = X([n])$, so such maps ultimate correspond to maps $X([n]) \to C$ --- that is, to $C$-valued $n$-cochains on the
symmetric simplicial homotopy type $X$! In total, $C^{\frac{1}{\Delta[n]}}$
classifies $n$-cochains, much in the way that $\Lambda^n$ classifies
differential $n$-forms. We note that $C^{\frac{1}{\Delta[n]}}$ inherits the
(crisp) algebraic structure of $C$ since $(-)^{\frac{1}{\Delta[n]}}$ is a right
adjoint.

If furthermore $C$ is a ring, then the
$C^{\frac{1}{\Delta[n]}}$ will be modules and since codiscretes are contractible
in this topos (by Theorem 10.2 of \cite{Shulman:Real.Cohesion}, noting that it
satisfies Shulman's Axiom C2), we see that the
$C^{\frac{1}{\Delta[n]}}$ are contractible. We may use the face inclusions
$\Delta[n] \to \Delta[n + 1]$ to give maps $C^{\frac{1}{\Delta[n]}} \to
C^{\frac{1}{\Delta[n + 1]}}$, and taking their alternating sum gives us a chain
complex 
$$C \xto{d} C^{\frac{1}{\Delta[1]}} \xto{d} C^{\frac{1}{\Delta[2]}} \xto{d}
\cdots$$

Reasoning externally, we can see that this sequence will be exact since the
$C$-valued cohomology of the $n$-simplices is trivial. Furthermore, since $\flat C_k =
\flat C$ by adjointness ($\flat (\ast \to C_k) = \flat (\ast^{\Delta_k} \to
C)$), we see that the $Z_k$ are infinitesimal: $$\flat Z_k = \ker(\flat d : \flat C_k \to \flat C_{k + 1}) = \ast.$$
For this reason, we may
make the following assumption in the setting of symetric simplicial cohesion,
mirroring \cref{assumption:form.classifiers} of the existence of form
classifiers in synthetic differential cohesion. 

\begin{assumption}\label{assumption.cochain.classifiers}
  Let $C$ be a codiscrete ring, and define $C_n :\equiv C^{\frac{1}{\Delta[n]}}$. Then the sequence
  $$0 \to \flat C \to C \xto{d} C_1 \xto{d} C_2 \xto{d} \cdots$$
  forms a contractible and infinitesimal resolution of $C$.
\end{assumption}

We can now interpret the abstract language of \cref{sec:abstract.ordinary.diff.coh} into the more
concrete language of \cref{assumption.cochain.classifiers}:
\begin{itemize}
  \item We have begun with a codiscrete abelian group $C$. We note that since
    $C$ is codiscrete, it is homotopically contractible: $\shape C = \ast$. Therefore, $\inftycover{C} = C$.
  \item The abelian groups $C_k$ are the $k$-cochain classifiers.
  \item The kernels $Z_k :\equiv \ker(d : C_k \to C_{k + 1})$ classify
    $k$-cocycles. Applying \cref{thm:shape.of.cocycle.classifier} here
    shows us that
    $$\shape Z_k = \flat B^k C.$$
    From this, we see that cohomology valued in the discrete group $\flat C$ is the
    universal discrete cohomological invariant of $k$-cocycles value in $C$. This
    justifies a remark of Lawvere in \cite{Lawvere:Codiscrete.Cohesion} that the
    $Z_k$ have the homotopy type of the Eilenberg-MacLane space $K(\flat C, k)$.
  \item Since $C$ is contractible, we have that $D_k$ as defined in
    \cref{defn:abstract.diff.coh} is the universal $\infty$-cover $\inftycover{Z}_{k + 1}$ of
    $Z_{k + 1}$.
    We see that $D_k$ classifies $(k + 1)$-cocycles together with witnesses that
    their induced cohomology class vanishes in $\flat \B^{k+1} C$.
\end{itemize}

The $D_k$ in this setting have more in common with pure differential cohomology
$B_{\nabla}^k \Rb$ than with ordinary differential cohomology $\B_{\nabla}^k
U(1)$ on account of being contractible. We can remedy this by introducing some
new data. Suppose that we have an exact sequence
$$0 \to K \to C \to G \to 0$$
of crisp codiscrete abelian groups. We may then redefine $D_k$ to instead
be the following pullback: 

\[\begin{tikzcd}
	{D_k} & {Z_{k + 1}} \\
	{\flat \B^{k + 1} K} & {\flat \B^{k + 1} C}
	\arrow[from=1-1, to=2-1]
	\arrow[from=1-1, to=1-2]
	\arrow[from=1-2, to=2-2]
	\arrow[from=2-1, to=2-2]
	\arrow["\lrcorner"{anchor=center, pos=0.125}, draw=none, from=1-1, to=2-2]
\end{tikzcd}\]
 We will then have $\shape D_k = \flat
B^{k + 1} K$, $\inftycover{D}_k = \inftycover{Z}_{k + 1}$, and $\flat D_k =
\flat \B^k G$, giving us a modal fracture hexagon:
\begin{equation}\label{eqn:abstract.hexagon.simplicial}
    \begin{tikzcd}
      & \inftycover{Z}_{k+1} \ar[dr, "\pi"] \ar[rr] & & Z_{k+1} 
      \ar[dr,  "(-)^{\shape}"] \\
      \flat \B^k C \ar[ur, "(-)_{\flat}"] \ar[dr] & &
       D_k \ar[dr, "(-)^{\shape}"'] \ar[ur, "F_{(-)}"] & & \flat\B^{k+1}{C} \\
      & \flat \B^k G  \ar[rr, "\beta"'] \ar[ur, "(-)_{\flat}"'] & & \flat \B^{k+1} K \ar[ur] &
    \end{tikzcd}
    \end{equation}

    Taking the short exact sequence $0 \to K \to C \to G \to 0$ to be
    $$0 \to \sharp \Zb \to \sharp \Rb \to \sharp U(1) \to 0$$
    gives us a bona-fide combinatorial analogue of ordinary differential
    cohomology, fitting within a similar hexagon. However, instead of equipping
   the integral cohomology of manifolds with differential form data, we are
   equipping the integral cohomology of symmetric simplicial sets with real
   cocycle data.
   
  We intend to give this combinatorial analogue of ordinary differential
  cohomology a fully internal treatment in future work.

   \printbibliography

\end{document}